\documentclass[a4paper, 12pt]{article}
\usepackage[margin=25mm]{geometry}

\usepackage{amsmath}
\usepackage{amsfonts}
\usepackage{amssymb}
\usepackage{amsthm}

\usepackage{graphicx}

\usepackage{subfigure} % two parallel figures subrefformat=parens

%\usepackage[dvipdfmx,hidelinks]{hyperref}  % without mark
%\usepackage[dvipdfmx]{hyperref} % with mark 
 %\usepackage{pxjahyper}  % hyperref     

%-------------------------------------------------
\theoremstyle{plain}
\newtheorem{theorem}{Theorem}[section]

\newtheorem{lem}[theorem]{Lemma}

\newtheorem{prop}[theorem]{Proposition}

\newtheorem{mtheorem}{Theorem} %  

\theoremstyle{definition}

\newtheorem{rem}[theorem]{Remark}
\newtheorem{mrem}{Remark} % 

%---------------------------------------------

% Keywords command
\providecommand{\keywords}[1]
{
  \small	
  \textbf{\textit{Keywords.}} #1
}

% 2020 msc command 
\providecommand{\msc}[1]
{
  \small	
  \textbf{\textit{2020 Mathematics Subject Classification.}} #1
}

%----------------------------------------------------

%                      Title  

\title{Diffusive logistic equation with a non Lipschitz nonlinear boundary condition arising from coastal fishery harvesting: the resonant case\footnote{This work was supported by JSPS KAKENHI Grant Numbers JP18K03353 and JP23K03162.}}

\author{Kenichiro Umezu  \\
        \small Department of Mathematics, Faculty of Education, Ibaraki University, Mito 310--8512, Japan \\
        \small E-mail: \texttt{kenichiro.umezu.math@vc.ibaraki.ac.jp}  \\
}

\numberwithin{equation}{section}
\numberwithin{theorem}{section}

\allowdisplaybreaks[4] 

\parskip=3pt 

\date{} % Comment this line to show today's date
\pagestyle{plain}

%--------------------------------------------- 
\begin{document} 
\maketitle

\begin{abstract}
For bifurcation analysis, we study the positive solution set for a semilinear elliptic equation of the logistic type, equipped with a sublinear boundary condition modeling coastal fishery harvesting. This work is a continuation of the author's previous studies  \cite{Um2023, Um2024}, where certain results were obtained in a {\it non resonant} case, including the existence, uniqueness, multiplicity, and strong positivity for positive solutions. 
In this paper, we consider the delicate {\it resonant} case and develop a sort of {\it non standard} bifurcation technique at zero to evaluate the positive solution set depending on a parameter. The nonlinear boundary condition is not right--differentiable at zero. 
\end{abstract} \hspace{10pt}

%TC:ignore
\keywords{Semilinear elliptic problem of logistic type, sublinear boundary condition, positive solution, bifurcation, coastal fishery harvesting.}

\msc{35J65, 35B32, 35J25, 92D40.}

%---------------------------------------------

%                   Body 

\section{Introduction} 
In this paper, we consider positive solutions for the following semilinear elliptic equation of logistic type, equipped with the nonlinear boundary condition modeling coastal fishery harvested population. 
\begin{align} \label{p}
\begin{cases}
-\Delta u = u-u^{p} & \mbox{ in } \Omega, \\
\frac{\partial u}{\partial \nu}  = -\lambda u^q & \mbox{ on } \partial\Omega. 
\end{cases} 
\end{align}
Here, $\Omega\subset \mathbb{R}^N$, $N\geq1$, is a bounded domain with a smooth boundary $\partial\Omega$, 
$\Delta = \sum_{i=1}^N \frac{\partial^2}{\partial x_i^2}$ is the usual Laplacian in $\mathbb{R}^N$, 
$p$ and $q$ are given exponents satisfying $0<q<1<p$, 
$\lambda \geq 0$ is a parameter, and $\nu$ is the unit outer normal to $\partial\Omega$. 

For $p=2$, the unknown function $u>0$ in $\Omega$ ecologically represents the biomass of fish inhabiting a \textit{lake} $\Omega$, which obeys the logistic law \cite{CCbook}. Then, the sublinear boundary condition with $0<q<1$ represents fishery harvesting with the harvesting effort $\lambda$ on the \textit{lake coast} $\partial\Omega$, obeying the Cobb--Douglas production \cite[Subsection 2.1]{GUU19}.

Unless stated otherwise, we assume that $p<\frac{N+2}{N-2}$ if $N>2$. A nonnegative function $u\in H^1(\Omega)$ is called a \textit{nonnegative (weak) solution} of \eqref{p} if $u$ satisfies that  
\begin{align*} 
\int_{\Omega} \biggl( \nabla u \nabla \varphi -u\varphi + u^p \varphi \biggr) + \lambda \int_{\partial\Omega} u^{q} \varphi=0, \quad \varphi \in H^1(\Omega). 
\end{align*}
We may consider $(\lambda,u)$ as a nonnegative solution of \eqref{p}. 
It is clear that problem \eqref{p} has a solution $(\lambda,0)$ for every $\lambda\geq 0$, called a {\it trivial solution}, and therefore, the sets $\{ (\lambda,0) : \lambda\geq0\}$ 
and $\{ (\lambda, 0) : \lambda > 0 \}$ are referred to as {\it trivial lines}. 
We know (\cite{Ro2005}) that a nonnegative solution $u$ of \eqref{p} belongs to $W^{1,r}(\Omega)$ for $r>N$ (consequently, to $C^{\theta}(\overline{\Omega})$ for  $\theta \in (0,1)$). 
In fact, a nontrivial nonnegative solution $u$ of \eqref{p} satisfies that $u\in C^{2+\theta}(\Omega)$ for $\theta \in (0,1)$ and $u>0$ in $\Omega$ (\cite{GT83}, \cite{PW67}), which is called \textit{a positive solution}. In addition, if a positive solution $u$ of \eqref{p} satisfies $u>0$ in $\overline{\Omega}$, then $u\in C^{2+\theta}(\overline{\Omega})$ by the bootstrap argument using elliptic regularity. Thus, $u$ satisfies \eqref{p} in $\overline{\Omega}$ in the classical sense. 
It should be emphasized that we do not know if $u>0$ on the entirety of $\partial\Omega$ for a positive solution $u$ of \eqref{p}. In fact, Hopf's boundary point lemma (\cite{PW67}) does not work because of the lack of a {\it one-sided Lipschitz condition} \cite[(4.1.19)]{Pa92} for mapping $0\leq u \mapsto (-u^q)$. 

Then, we predict the structure of the positive solution set $\{ (\lambda, u) \}$ for \eqref{p}. Problem \eqref{p} is the Neumann problem when considering $\lambda = 0$, and $(\lambda, u)=(0,1)$ is a unique positive solution of \eqref{p} for $\lambda=0$; oppositely, this problem is regarded formally as the Dirichlet problem when considering $\lambda \to \infty$. For the positive solutions of the Dirichlet problem, let us introduce $\beta_\Omega>0$ as the smallest eigenvalue of the Dirichlet eigenvalue problem
\begin{align*} 
\begin{cases}
-\Delta \phi = \beta \phi & \mbox{ in } \Omega, \\
\phi = 0 & \mbox{ on } \partial \Omega. 
\end{cases}    
\end{align*} 
It is well known that $\beta_{\Omega}$ is simple with the positive eigenfunction $\phi_{\Omega}\in C^{2+\theta}(\overline{\Omega}) \cap H^1_0(\Omega)$, $0<\theta<1$, normalized as $\| \phi_\Omega\|=1$. 
The positivity means that $\phi_\Omega > 0$ in $\Omega$ and 
\begin{align} \label{bdry1513}
0< \min_{\partial\Omega}\biggl( -\frac{\partial \phi_{\Omega}}{\partial \nu} \biggr) \leq \max_{\partial\Omega}\biggl( -\frac{\partial \phi_{\Omega}}{\partial \nu} \biggr). 
\end{align}
As a matter of fact, $\beta_{\Omega}$ is characterized by the variational formula
\begin{align} \label{lamOcha}
\beta_{\Omega} = \inf\left\{ \int_{\Omega}|\nabla \phi|^2 : \phi\in H^1_0(\Omega), \ \int_{\Omega}\phi^2=1 \right\}. 
\end{align}
If $\beta_\Omega < 1$, then we denote by $u_{\mathcal{D}}\in C^{2+\theta}(\overline{\Omega})\cap H^{1}_{0}(\Omega)$, $0<\theta < 1$,  
the unique positive solution of the Dirichlet problem (\cite{BO86}) 
\begin{align} \label{Dp}
\begin{cases}
-\Delta u = u-u^{p} & \mbox{ in } \Omega, \\
u = 0 & \mbox{ on } \partial\Omega. 
\end{cases} 
\end{align}
Since the positive solution $(\lambda,u)=(0,1)$ is non degenerate, or asymptotically stable, 
the implicit function theorem allows us to deduce that there emanates from $(\lambda,u)=(0,1)$ a positive solution $U_{1}=U_{1,\lambda}$ of \eqref{p} for a sufficiently small $\lambda > 0$, see Remark \ref{rem1030}(i). 
Thus, when $\beta_{\Omega}<1$, it would be expected that $(\lambda, U_{1})$ is extended entirely to $\lambda>0$ and behaves asymptotically like the positive solution $u_{\mathcal{D}}$ of \eqref{Dp} as $\lambda \to \infty$. 
If $\beta_{\Omega}\geq1$, then problem \eqref{Dp} does not have any positive solution, and $u=0$ is the only nonnegative solution of \eqref{Dp} (\cite{BO86}); thus, the following two alternatives for \eqref{p} would be presented: $(\lambda, U_{1})$ vanishes as $\lambda \to \infty$ (if it exists in the entirety of $\lambda > 0$), or there is no positive solution of \eqref{p} for any sufficiently large $\lambda>0$.  

The case where $\beta_\Omega \neq 1$, referred to as {\it non resonance}, has been studied in the author's previous works \cite{Um2023, Um2024}. Then, we summarize the results obtained there.  
\setcounter{mtheorem}{-1}
\begin{mtheorem} \label{t0}
{\rm (I)} $u<1$ in $\overline{\Omega}$ and $u>0$ on $\Gamma$ with $|\Gamma|>0$ for some $\Gamma \subset \partial\Omega$ for a positive solution $u$ of \eqref{p}, provided that 
$\lambda>0$.  

{\rm (II)} Assume that $\beta_\Omega < 1$. Then, $u>0$ in $\overline{\Omega}$ for a positive solution $u$ of \eqref{p}.  
Conversely, problem \eqref{p} has at least one positive solution $U_{1}>0$ in $\overline{\Omega}$ for each $\lambda > 0$. Moreover, the following two assertions hold (Figure \ref{fig_g2}):
\begin{enumerate} \setlength{\itemsep}{3pt} 

\item $U_{1}=U_{1,\lambda}$ is {\rm unique} for $\lambda>0$ close to $0$ and for a sufficiently large $\lambda>0$. Additionally, $U_{1,\lambda}$ decreases, 
and it satisfies that $U_{1,\lambda} \longrightarrow 1$ in $C^{2+\theta}(\overline{\Omega})$, $0<\theta < 1$, as $\lambda \to 0^{+}$, while  
\begin{align}
U_{1,\lambda} \longrightarrow u_{\mathcal{D}} \ \mbox{ in } \ H^1(\Omega) \ 
\mbox{ as } \  \lambda \rightarrow \infty. \label{convtouD}
\end{align}
    
\item For $\Gamma_0 = \{ (\lambda, 0) : \lambda\geq 0\}\cup \{ (0, u) : u\geq0 \}$, the positive solution set $\{ (\lambda,u)\}$ 
does not meet any point on $\Gamma_0 \setminus \{ (0,1)\}$ in the topology of $[0,\infty)\times C(\overline{\Omega})$, that is, for $(\lambda, u)\in \Gamma_0\setminus \{ (0,1)\}$ there are no positive solutions of \eqref{p} in a neighborhood of $(\lambda,u)$. 

\end{enumerate}

\vspace{3pt}

{\rm (III)} Assume that $\beta_\Omega > 1$. Then, problem \eqref{p} has at least {\rm two} positive solutions $U_{1}$ and $U_{2}$ for $\lambda>0$ close to $0$ such that $U_{2}<U_{1}$ in $\overline{\Omega}$ 
and no positive solutions for $\lambda\geq \overline{\lambda}$ with some $\overline{\lambda}>0$. Moreover, the following two assertions hold. 
\begin{enumerate}\setlength{\itemsep}{3pt} 

\item The positive solution set $\{ (\lambda,u)\}$ does not meet any point on $\Gamma_0 \setminus \{ (0,0), (0,1)\}$ in the topology of $[0, \infty)\times C(\overline{\Omega})$.  

\item There exists a {\rm subcontinuum} (i.e., nonempty, closed, and connected subset) $\mathcal{C}^{\ast}_{p,q}=\{ (\lambda, u) \}$ in $[0, \infty)\times C(\overline{\Omega})$ for the nonnegative solutions of \eqref{p}, joining $(0,0)$ to $(0,1)$, see Figure \ref{figless1}.  

\end{enumerate}
\end{mtheorem}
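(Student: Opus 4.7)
The plan is to treat the three assertions separately, since (I) and (II) rest on classical PDE tools (maximum principle, sub/super-solution method, implicit function theorem) suitably adapted to the non-Lipschitz boundary condition, while (III) requires a non-standard bifurcation analysis at $(0,0)$, which is precisely the delicate point the present paper must revisit in the resonant case.

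For (I), I would first show $u<1$ in $\overline{\Omega}$ by the maximum principle. If $M=\max_{\overline{\Omega}}u\ge 1$ were attained at an interior point, then $-\Delta u\ge 0$ there gives $u\le 1$ at that point and the strong maximum principle forces $u\equiv 1$, contradicting the boundary condition whenever $\lambda>0$. If instead the maximum occurs only on $\partial\Omega$ at some $x_0$ with $u(x_0)\ge 1$, then $u$ is locally bounded away from zero near $x_0$, so $-u^q$ is locally Lipschitz there and Hopf's lemma yields $\partial u/\partial\nu(x_0)>0$, again contradicting the boundary condition. The existence of $\Gamma\subset\partial\Omega$ with $|\Gamma|>0$ on which $u>0$ follows by testing the weak formulation with $\varphi\equiv 1$: one obtains $\int_\Omega(u-u^p)=\lambda\int_{\partial\Omega}u^q$, whose left-hand side is strictly positive for $u\not\equiv 0$ thanks to the bound $u<1$.

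For (II), when $\beta_\Omega<1$ the Dirichlet solution $u_{\mathcal{D}}$ exists and satisfies $\partial u_{\mathcal{D}}/\partial\nu<0$ on $\partial\Omega$ by Hopf, so $u_{\mathcal{D}}$ is a strict subsolution of \eqref{p} for every $\lambda>0$. Paired with the supersolution $u\equiv 1$, monotone iteration produces a positive solution $U_{1,\lambda}$ with $u_{\mathcal{D}}\le U_{1,\lambda}\le 1$, in particular $U_{1,\lambda}>0$ on $\overline{\Omega}$. For uniqueness near $\lambda=0$, I would apply the implicit function theorem at $(0,1)$, where the Neumann linearization $-\Delta+(p-1)\mathrm{Id}$ is invertible; for uniqueness near $\lambda=\infty$, a reformulation with parameter $1/\lambda$ together with the non-degeneracy of $u_{\mathcal{D}}$ as a Dirichlet solution yields the same conclusion, along with the convergence \eqref{convtouD}. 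The non-meeting property in (ii) reduces to a linearization argument showing that no eigenvalue of the linearized problem along the trivial line vanishes except at $(0,1)$.

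For (III) with $\beta_\Omega>1$, nonexistence for large $\lambda$ is obtained by testing the equation against $\phi_\Omega$ and exploiting $\beta_\Omega>1$ together with \eqref{bdry1513} and the uniform bound from (I) to derive an a priori bound on $\lambda$. The two positive solutions for small $\lambda$ and the subcontinuum $\mathcal{C}^{\ast}_{p,q}$ form the principal obstacle, since the boundary mapping $u\mapsto -u^q$ is not Fr\'echet differentiable at $u=0$ (as $q<1$) and classical Rabinowitz global bifurcation from $(0,0)$ is unavailable. My plan is to introduce a rescaling $u=\mu v$ and pass $\mu\to 0^+$ so that the boundary term becomes $-\lambda\mu^{q-1}v^q$; choosing $\lambda=\lambda(\mu)$ appropriately produces a limiting eigenvalue problem governed by $\beta_\Omega$, and a topological degree argument then supplies a continuum of nontrivial solutions. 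The subcontinuum is finally extended globally by using (I) and the nonexistence result to preclude escape to infinity, and the non-meeting assertion (i) to exclude collision with the trivial line away from $(0,0)$ and $(0,1)$; this last degree/continuation step, inherited unchanged by the resonant analysis of this paper, is where I expect most of the technical difficulty to concentrate.
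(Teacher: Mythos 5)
A preliminary remark: Theorem \ref{t0} is not proved in this paper at all --- it is a summary imported from \cite{Um2023, Um2024} --- so the comparison below is against the fragments that \emph{are} reproved here (Proposition \ref{lem:albet:bddnorm}, Remark \ref{mrem1}(ii), Lemma \ref{prop:sub}) and against the method of Sections \ref{sec:continuum}--\ref{sec:sub} for the analogous resonant statements. Your argument for (I) is essentially the paper's: the maximum-principle/Hopf dichotomy is exactly Proposition \ref{lem:albet:bddnorm} (including the key observation that at a boundary maximum with $u\geq 1$ the solution is locally bounded away from zero, so the regularity needed for Hopf's lemma is available), and obtaining $\Gamma$ from the test function $\varphi\equiv 1$ is correct.

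The gaps are in (II) and (III). In (II) you take $u_{\mathcal{D}}$ as subsolution and run a monotone iteration on $[u_{\mathcal{D}},1]$. This fails for precisely the reason the introduction flags: $u_{\mathcal{D}}=0$ on $\partial\Omega$, so the order interval contains boundary values arbitrarily close to $0$, where $t\mapsto -\lambda t^{q}$ admits no one-sided Lipschitz constant (the derivative of $Mt-\lambda t^{q}$ is $M-\lambda q t^{q-1}\to-\infty$ as $t\to 0^{+}$ for any finite $M$), so Amann's theorem \cite[(2.1) Theorem]{Am76N} is not applicable on that interval. The device actually used (Lemma \ref{prop:sub} and Remark \ref{mrem1}(ii)) is a subsolution $\varepsilon(\phi_\Omega+\varepsilon^{\tau})$ that is \emph{strictly positive on} $\overline{\Omega}$; this both repairs the iteration and delivers the part of (II) your proposal does not address at all, namely that \emph{every} positive solution satisfies $u>0$ on the whole of $\overline{\Omega}$ when $\beta_\Omega<1$ --- the genuinely delicate claim, since Hopf's lemma is unavailable where $u$ vanishes on $\partial\Omega$ and a solution squeezed between $u_{\mathcal{D}}$ and $1$ could a priori vanish on part of the boundary. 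In (III), your nonexistence argument for large $\lambda$ by testing against $\phi_\Omega$ cannot work as stated: $\phi_\Omega=0$ on $\partial\Omega$, so the boundary term --- and with it $\lambda$ --- drops out of the resulting identity, which therefore yields no bound on $\lambda$; the actual mechanism is a compactness/contradiction argument along $\lambda_n\to\infty$ of the type in Propositions \ref{prop:asympt} and \ref{153319}. Finally, the subcontinuum joining $(0,0)$ to $(0,1)$ is obtained in this line of work not by the rescaling $u=\mu v$ you sketch (which leaves the boundary term exactly as non-differentiable at $v=0$ as before) but by regularizing the boundary nonlinearity to $(u+\alpha)^{q-1}u$, bifurcating from the principal eigenvalue of \eqref{epromu0} via the standard Crandall--Rabinowitz/Rabinowitz theory, and passing to the limit $\alpha\to 0^{+}$ with Whyburn's Theorem \ref{thm:W}; your degree sketch is too schematic to assess and, as written, does not explain how the continuum is forced to terminate at $(0,1)$ rather than elsewhere.
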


\setcounter{mrem}{-1}
\begin{mrem} 

\label{mrem1}

(i) Assertion (I) holds unconditionally in the value of $\beta_{\Omega}>0$.  

(ii) The existence part in assertion (II) remains valid for any $p>1$, and it can be improved to the following result: problem \eqref{p} has a {\rm minimal} positive solution $\underline{U}_{1}$ and a {\rm maximal} positive solution $\overline{U}_{1}$ for each $\lambda > 0$ such that $0<\underline{U}_{1}\leq \overline{U}_{1}<1$ in $\overline{\Omega}$, meaning that $\underline{U}_{1} \leq U \leq \overline{U}_{1}$ in $\overline{\Omega}$ for a positive solution $U>0$ in $\overline{\Omega}$ of \eqref{p}, see for the proof assertion (I) of Theorem \ref{mt}. 

(iii) For assertion (III-ii) we find from assertion (III-i) that if $(\lambda,u) \in \mathcal{C}^{\ast}_{p,q}\setminus \{ (0,0), (0,1)\}$, then $\lambda > 0$, and $u$ is a positive solution of \eqref{p} for $\lambda$.      

(iv) As a similar study, a diffusive logistic population model with a fishery harvest inside a domain was analyzed by Cui, Li, Mei, and Shi \cite{CLMS2017}. 

\end{mrem}

%         main result presented 

Then, the objective of this paper is to investigate the structure of the positive solutions set $\{ (\lambda, u) : \lambda\geq0 \}$ for \eqref{p} in the case where $\beta_\Omega = 1$, referred to as {\it resonance}. We are now ready to present our main result for the resonant case. 
%
%              Main result 
%
\begin{theorem} \label{mt}
Assume that $\beta_\Omega = 1$. Then, the following assertions hold. 

{\rm (I)} For $pq>1$, problem \eqref{p} has a {\rm minimal} positive solution $\underline{U}_{1}$ and a {\rm maximal} positive solution $\overline{U}_{1}$ for each $\lambda > 0$ such that $0<\underline{U}_{1}\leq \overline{U}_{1}<1$ in $\overline{\Omega}$, meaning that $\underline{U}_{1} \leq U \leq \overline{U}_{1}$ in $\overline{\Omega}$ for a positive solution $U>0$ in $\overline{\Omega}$ of \eqref{p}.  
Additionally, the following {\rm three} assertions hold (see Figure \ref{fig_g3}):
\begin{enumerate} \setlength{\itemsep}{3pt} 

\item $U_{1}:=\underline{U}_{1}=\overline{U}_{1}$ is {\rm unique} for $\lambda>0$ close to $0$.  
    
\item The positive solution set $\{ (\lambda,u)\}$ does not meet any point on $\Gamma_0 \setminus \{ (0,1)\}$ in the topology of $[0,\infty)\times C(\overline{\Omega})$, i.e., for $(\lambda, u)\in \Gamma_0\setminus \{ (0,1)\}$ there are no positive solutions of \eqref{p} in a neighborhood of $(\lambda,u)$. 

\item If $(\lambda_n,u_n)$ is a positive solution of \eqref{p} with $\lambda_n\to \infty$, then $u_n \rightarrow 0$ in $H^1(\Omega)$, and $\frac{u_n}{\| u_n \|} \rightarrow \phi_\Omega$ in $H^1(\Omega)$.
   
\end{enumerate}

\vspace{3pt}

{\rm (II)} For $pq<1$, problem \eqref{p} has at least {\rm two} positive solutions $U_1$ and $U_2$ for $\lambda>0$ close to $0$ such that $U_2<U_1$ in $\overline{\Omega}$, and there are no positive solutions of \eqref{p} for $\lambda\geq \overline{\lambda}$ with some $\overline{\lambda}>0$. Additionally, the following {\rm two} assertions hold. 
\begin{enumerate}\setlength{\itemsep}{3pt} 

\item The positive solution set does not meet any point on $\Gamma_0 \setminus \{ (0,0), (0,1)\}$ in the topology of $[0, \infty)\times C(\overline{\Omega})$. 

\item There exists a subcontinuum $\mathcal{C}^{\ast}_{p,q} =\{ (\lambda, u)\}$ in $[0, \infty)\times C(\overline{\Omega})$ for the nonnegative solutions of \eqref{p}, 
joining $(0,0)$ to $(0,1)$ and meeting the condition that if $(\lambda, u) \in \mathcal{C}_{p,q}^{\ast}\setminus \{ (0,0), (0,1)\}$, then $\lambda>0$ and 
$u$ is a positive solution of \eqref{p} for $\lambda$, see Figure \ref{figless1}. 

\end{enumerate}

\vspace{3pt} 

{\rm (III)} For $pq=1$, problem \eqref{p} has a {\rm unique} positive solution $U_1$ for $\lambda > 0$ close to $0$, which satisfies that $U_1>0$ in $\overline{\Omega}$, and there are no positive solutions for $\lambda\geq \overline{\lambda}$ with some $\overline{\lambda}>0$. 
Additionally, the following {\rm two} assertions hold. 

\begin{enumerate}\setlength{\itemsep}{3pt} 

\item For some $\underline{\lambda}>0$, the positive solution set $\{ (\lambda,u)\}$ does not meet any point on $\{ (\lambda, 0) : 0\leq \lambda < \underline{\lambda} \}\cup \{ (0,u) : u\geq0\}$ in the topology of $[0, \infty)\times C(\overline{\Omega})$ except for $(0,1)$.  

\item There exists a subcontinuum $\mathcal{C}^{\ast}_{p,q} =\{ (\lambda, u) \}$ 
in $[0, \infty)\times C(\overline{\Omega})$ for the nonnegative solutions of \eqref{p}, joining $(0,0)$ to $(0,1)$ and satisfying 

\begin{itemize} \setlength{\itemsep}{3pt} 

\item $\lambda>0$ if $(\lambda, u) \in \mathcal{C}_{p,q}^{\ast}\setminus \{ (0,0), (0,1)\}$; 

\item $\mathcal{C}_{p,q}^{\ast}$ includes the line segment $\{ (\lambda, 0) : \lambda \in [0,\underline{\lambda}) \}$ of trivial solutions, and there is not any other nonnegative solution of \eqref{p} in a neighborhood of $(\lambda,0)$ for $\lambda \in [0, \underline{\lambda})$.  

\end{itemize}
\end{enumerate}
\end{theorem}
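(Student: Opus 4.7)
The proof combines three ingredients: a classical implicit function argument near the nontrivial constant solution $(\lambda,u)=(0,1)$, a rescaling analysis pinned to the Dirichlet eigenfunction $\phi_{\Omega}$ (which is how resonance enters through $\beta_{\Omega}=1$), and a global continuation in the nonsmooth setting caused by $0<q<1$. The three regimes separated by the product $pq$ arise from an explicit balance law between the bulk exponent $p$ and the boundary exponent $q$.

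Near $(\lambda,u)=(0,1)$ the constant $u\equiv 1$ is a classical supersolution of \eqref{p} for every $\lambda\geq 0$, the Neumann linearization at $u=1$ equals $-\Delta+(p-1)$ and is invertible on $H^{1}(\Omega)$, and $u\mapsto u^{q}$ is smooth near $u=1$. The implicit function theorem therefore produces a unique smooth curve $U_{1,\lambda}\to 1$ in $C^{2+\theta}(\overline\Omega)$ as $\lambda\to 0^{+}$, which combined with $u\equiv 1$ as a supersolution and a standard monotone iteration exploiting the concavity of $u\mapsto -u^{q}$ yields the minimal/maximal pair $\underline U_{1}\leq\overline U_{1}$ of (I) exactly as in Remark \ref{mrem1}(ii) for the non-resonant case; the coincidence $\underline U_{1}=\overline U_{1}$ for small $\lambda$ gives the uniqueness statements in (I-i) and (III). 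Nonexistence for $\lambda\geq\overline\lambda$ in (II) and (III) is obtained by testing \eqref{p} against $u$ itself,
\[
\int_{\Omega}\bigl(|\nabla u|^{2}-u^{2}+u^{p+1}\bigr)+\lambda\int_{\partial\Omega}u^{q+1}=0,
\]
and combining with the variational characterization of $\beta_{\Omega}=1$ in \eqref{lamOcha} together with a Young inequality applied to the boundary term.

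The non-meeting statements and the asymptotic shape (I-iii) all come from one rescaling argument. Given a sequence $(\lambda_{n},u_{n})$ of positive solutions with $u_{n}\to 0$ in $C(\overline\Omega)$, set $s_{n}=\|u_{n}\|$ and $v_{n}=u_{n}/s_{n}$. Testing \eqref{p} against $\phi_{\Omega}$ and using $-\Delta\phi_{\Omega}=\phi_{\Omega}$, $\phi_{\Omega}|_{\partial\Omega}=0$, eliminates $\lambda$ and yields the key identity
\[
\int_{\partial\Omega}u_{n}\Bigl(-\frac{\partial\phi_{\Omega}}{\partial\nu}\Bigr)=\int_{\Omega}u_{n}^{p}\phi_{\Omega},
\]
which in the rescaled variables forces the boundary trace of any limit of $v_{n}$ to vanish; combined with $-\Delta v=v$ this gives $v=c\phi_{\Omega}$ for some $c>0$, and $u_{n}|_{\partial\Omega}=O(s_{n}^{p})$. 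Testing \eqref{p} against the constant $1$ produces the complementary identity $\lambda_{n}\int_{\partial\Omega}u_{n}^{q}=\int_{\Omega}u_{n}(1-u_{n}^{p-1})$, and comparing the two yields the scaling balance $\lambda_{n}\asymp s_{n}^{1-pq}$. The trichotomy is then immediate: $pq>1$ forces $\lambda_{n}\to\infty$, proving (I-ii) and (I-iii); $pq<1$ forces $\lambda_{n}\to 0^{+}$, giving (II-i) and local bifurcation at $(0,0)$; $pq=1$ produces a finite threshold $\lambda_{n}\to\underline\lambda>0$ that is precisely the constant appearing in (III-i).

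The continua in (II-ii), (III-ii) and the multiplicity in (II) come from promoting the balance law into a rigorous bifurcation argument. Substituting $u=s\phi_{\Omega}+s^{p}w$ and $\lambda=s^{1-pq}\mu$ into \eqref{p} converts the H\"older boundary nonlinearity into $u^{q}=s^{pq}w^{q}$ on $\partial\Omega$ (where $\phi_{\Omega}=0$), which is $C^{1}$ in $(s,w,\mu)$ as long as $w>0$ on $\partial\Omega$; the reduced equation for $w$ is a resonant Dirichlet-type problem $-\Delta w-w=-\phi_{\Omega}^{p}+o(1)$ whose Fredholm alternative with kernel $\mathbb{R}\phi_{\Omega}$ fixes a unique $\mu_{*}>0$, after which bifurcation from a simple eigenvalue produces a smooth branch of positive solutions emanating at $(s,\mu)=(0,\mu_{*})$. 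Rabinowitz-type global continuation in the $(s,w,\mu)$-picture, together with the non-meeting statements proved above, forces this branch to terminate only at $(0,0)$ and $(0,1)$ in case (II) and to coincide with the trivial segment $\{(\lambda,0):\lambda\in[0,\underline\lambda)\}$ at its $(0,0)$ end in case (III). The main obstacle throughout is exactly this regularization step: because $u\mapsto u^{q}$ is only H\"older at $u=0$ and Hopf's lemma is unavailable there, the scaling $(s,w,\mu)$ must be chosen precisely as above for the reformulated map to be $C^{1}$ and Fredholm; this is the ``non standard bifurcation technique'' announced in the abstract.
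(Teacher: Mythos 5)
Your proposal reproduces several of the paper's structural ideas correctly (the implicit function theorem at $(0,1)$, testing against $\phi_\Omega$ and against $1$ to obtain the two identities, and the heuristic balance $\lambda\asymp\|u\|^{1-pq}$, which is indeed the mechanism behind Lemma \ref{prop:E}), but two of your central steps have genuine gaps. First, the nonexistence of positive solutions for $\lambda\geq\overline{\lambda}$ cannot follow from ``testing \eqref{p} against $u$ \ldots together with a Young inequality applied to the boundary term'': that argument makes no use of the hypothesis $pq\leq 1$, yet the conclusion is \emph{false} when $pq>1$ (by part (I), solutions exist for every $\lambda>0$). Testing against $u$ gives $\lambda\int_{\partial\Omega}u^{q+1}\leq -E(u)\leq|\Omega|$, and to bound $\lambda$ one needs a lower bound on $\int_{\partial\Omega}u^{q+1}$; but as $\lambda\to\infty$ the solutions converge to $\phi_\Omega$ in profile and their boundary traces degenerate, so this quantity tends to zero. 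The paper's Proposition \ref{153319}(ii) instead runs a contradiction argument: $\lambda_n\to\infty$ forces $u_n\to 0$ (Proposition \ref{prop:asympt}, a compactness step your proposal also omits for (I-iii)), then $u_n/\|u_n\|\to\phi_\Omega$, then the orthogonal decomposition $u_n=s_n\phi_\Omega+v_n$ and the energy inequality $E(v_n)+c\int_{\partial\Omega}v_n^{q+1}\leq0$ of Lemma \ref{prop:E} --- which is exactly where $pq\leq1$ enters --- force $v_n/\|v_n\|\to\phi_\Omega\in V$, contradicting $\langle\phi_\Omega\rangle\cap V=\{0\}$.

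Second, your construction of the continua via the ansatz $u=s\phi_\Omega+s^pw$, $\lambda=s^{1-pq}\mu$ and a Lyapunov--Schmidt reduction is not rigorous as stated. The rescaled boundary condition reads $\frac{\partial w}{\partial\nu}=s^{1-p}\bigl(-\frac{\partial\phi_\Omega}{\partial\nu}-\mu w^q\bigr)$, which is singular as $s\to0^+$ since $p>1$; the reformulated map is therefore not $C^1$ in $(s,w,\mu)$ at $s=0$, and classical bifurcation from a simple eigenvalue does not apply (this is precisely the obstruction the paper emphasizes: the problem is not linearizable at $u=0$). Moreover your reduction requires $w>0$ on all of $\partial\Omega$ for $w\mapsto w^q$ to be differentiable, which is not known a priori --- the paper stresses that Hopf's lemma is unavailable and strict boundary positivity cannot be assumed. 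Tellingly, if your Fredholm computation were valid it would produce the explicit bifurcation value $\mu_*$ for $pq=1$, whereas Remark \ref{rem1030}(ii) states that determining $\lambda^{\ast}_{p,q}$ is an open problem. The paper circumvents all of this by a double regularization --- replacing $u^q$ by $(u+\alpha)^{q-1}u$ with $\alpha>0$ and detuning the resonance to $\beta<1$ in \eqref{p-abz}, where standard global bifurcation applies at the principal eigenvalue $\lambda_{\alpha,\beta}$ --- and then passing to the limits $\alpha\to0^+$ and $\beta\to1^-$ with Whyburn's theorem (Theorem \ref{thm:W}), using the a priori bounds of Sections \ref{sec:ener} and \ref{sec:bounds} for precompactness. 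Finally, a smaller point on (I): the minimal/maximal pair requires an explicit positive subsolution, and the construction $\phi_\varepsilon=\varepsilon(\phi_\Omega+\varepsilon^\tau)$ works only because $pq>1$ permits the choice $\frac{1-q}{q}<\tau<p-1$; your appeal to ``a standard monotone iteration'' does not indicate where the hypothesis $pq>1$ is used.
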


\begin{figure}
   \begin{center}
   \subfigure[Case $\beta_\Omega <1$.]{
  \includegraphics[scale=0.22]{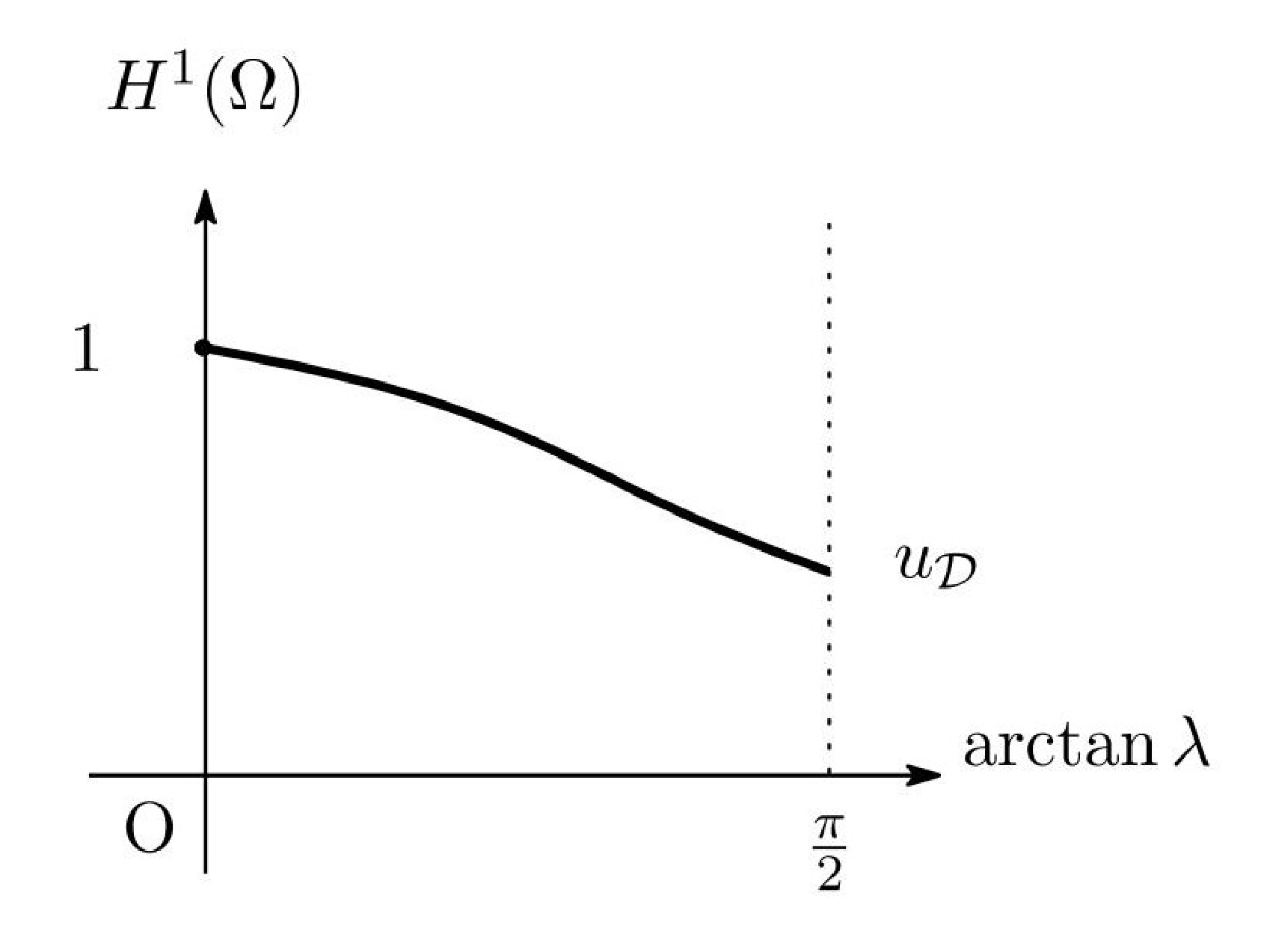}
     \label{fig_g2}
   }
   \hspace{0.5cm}
   \subfigure[Case $\beta_\Omega = 1$ and $pq>1$.]{
  \includegraphics[scale=0.22]{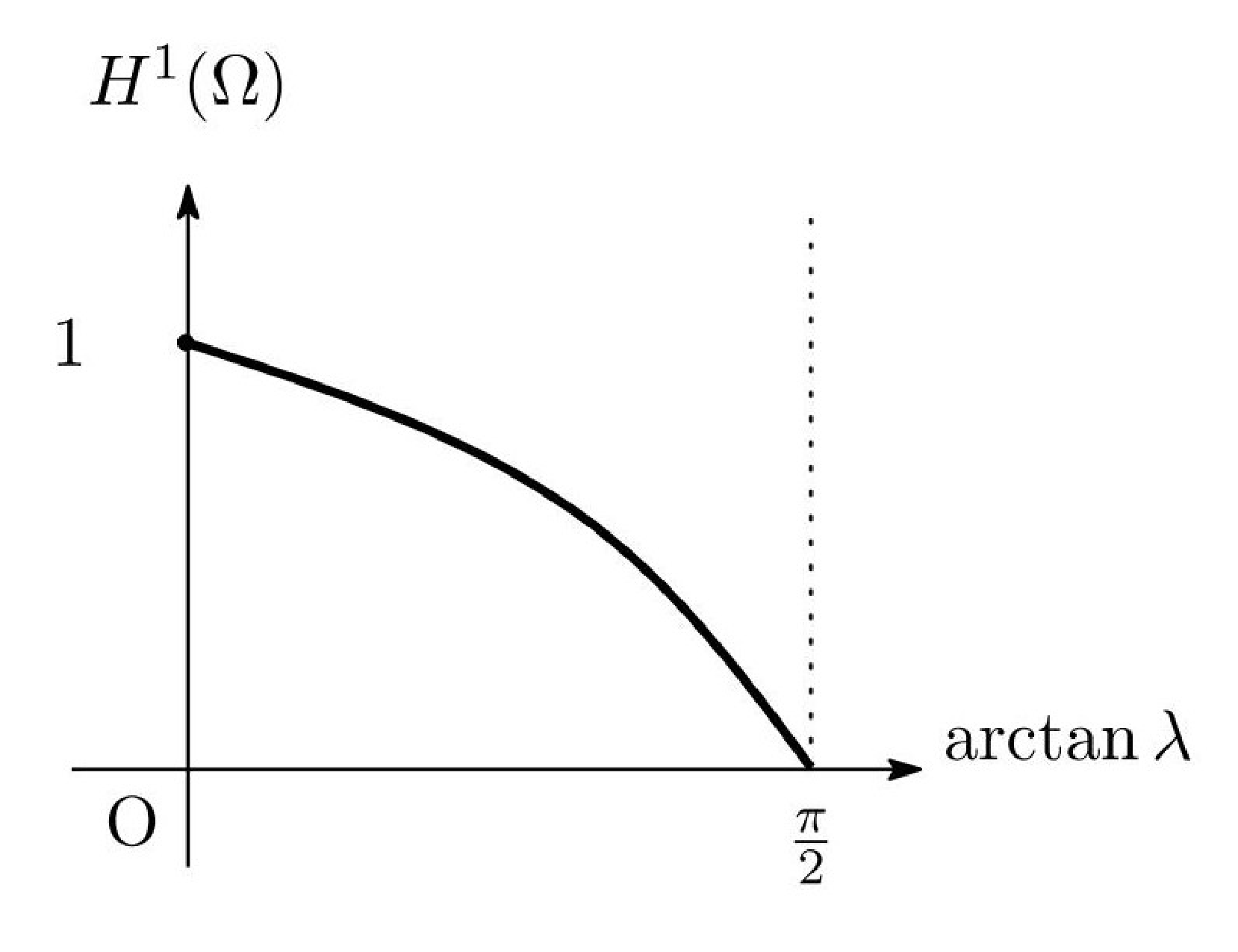}
     \label{fig_g3}
   }
   \end{center}
   \vspace{-0.5cm}
   \caption{Case with either $\beta_\Omega < 1$ or $\beta_\Omega =1$ and $pq>1$.}
   \label{fig_g23}
 \end{figure}

%---------------------------- 

\begin{figure}
   \begin{center}
      \subfigure[Case $\beta_\Omega = 1$ and $pq=1$.]{
  \includegraphics[scale=0.22]{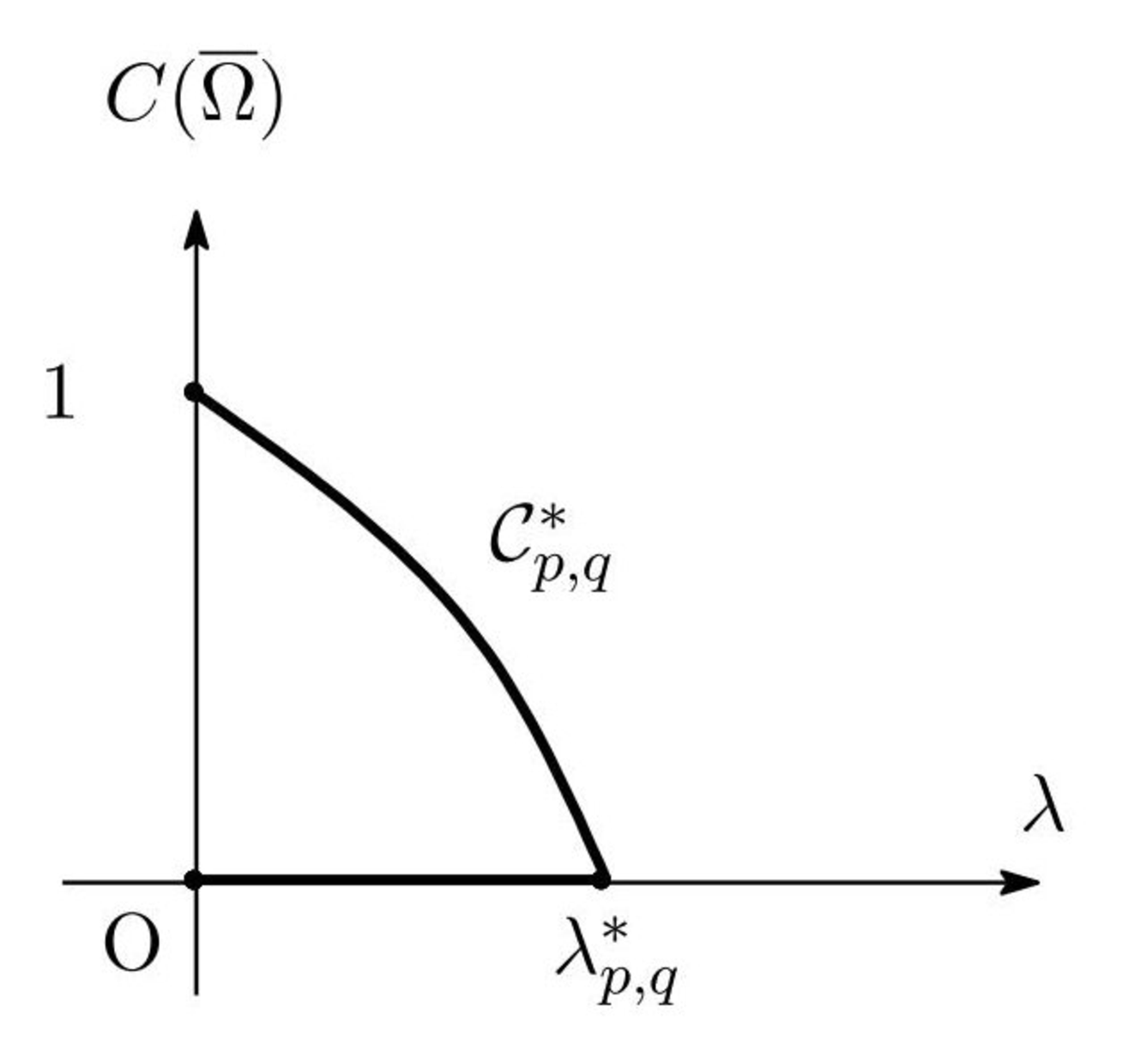}
     \label{figeq1}
   }
   \hspace{0.5cm}
   \subfigure[Case either $\beta_\Omega =1$ and $pq<1$ or $\beta_\Omega>1$.]{
  \includegraphics[scale=0.22]{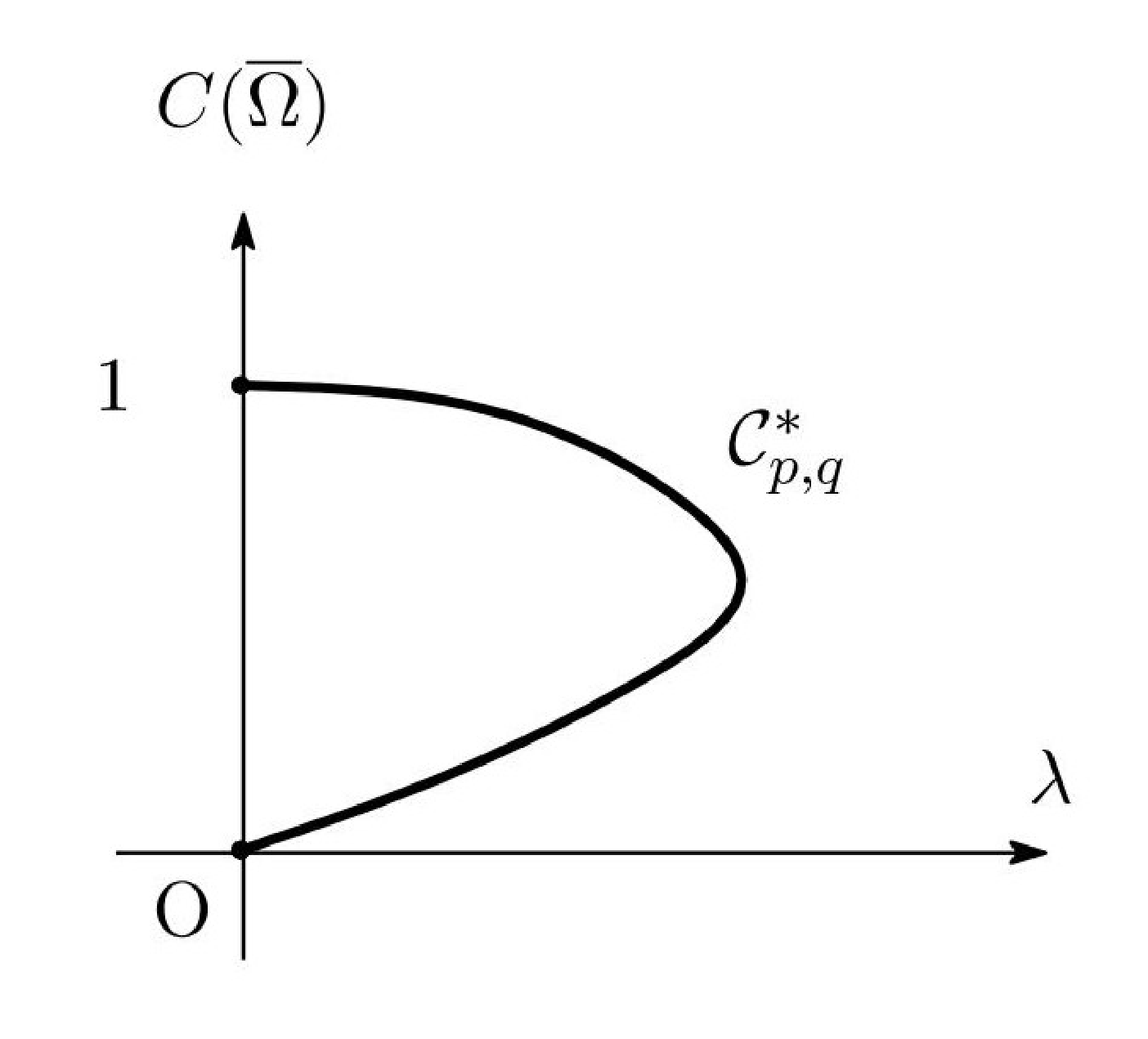}
     \label{figless1}
   }
   \end{center}
   \vspace{-0.5cm}
   \caption{Case with either $\beta_\Omega=1$ and $pq\leq 1$ or $\beta_\Omega > 1$.}
   \label{figcontinua}
 \end{figure}

\begin{rem} 

\label{rem1030}

(i) The positive solution $U_1>0$ in $\overline{\Omega}$ presented in assertions (I) to 
(III) is parameterized by $U_1=U_{1, \lambda}$ around $(\lambda, u)=(0,1)$ in $[0, \infty)\times C^{2+\theta}(\overline{\Omega})$, $0<\theta < 1$. This is a result of applying the implicit function theorem at $(\lambda, u)=(0,1)$. For the details, we refer to \cite[Theorem 1.1]{Um2023}. Thus, $U_{1,\lambda}$ is asymptotically stable, and $U_{1,\lambda} \rightarrow 1$ in $C^{2+\theta}(\overline{\Omega})$ as $\lambda \to 0^{+}$. Moreover, as in assertion (II-i) of Theorem \ref{t0}, $U_{1,\lambda}$ is decreasing for $\lambda$ if $\beta_{\Omega} = 1$ and $pq>1$. 

(ii) It is worth mentioning that there exist bifurcation points on $\{ (\lambda,0) : \lambda>0\}$ for positive solutions of \eqref{p} for the case when $\beta_\Omega = 1$ and $pq=1$. From assertion (III-i) we define  
\begin{align*}
    &\lambda^{\ast}_{p,q} = \sup\{ \underline{\lambda}>0 : \\ 
    & \hspace*{1.9cm} \mbox{there is no bifurcating positive solution of \eqref{p} at $(\lambda,0)$ for $0\leq \lambda <\underline{\lambda}$} \}. 
\end{align*}
Then, $\lambda^{\ast}_{p,q}$ is {\it finite} due to the existence of $\overline{\lambda}$ and the subcontinuum $\mathcal{C}_{p,q}^{\ast}$ joining $(0,0)$ to $(0,1)$. 
By the definition, the positive solution of \eqref{p} lying on $\mathcal{C}^{\ast}_{p,q}$ emanates from $(\lambda^{\ast}_{p,q}, 0)$, but $\lambda^{\ast}_{p,q}$ is not yet provided directly, see Figure \ref{figeq1}. 
This type of bifurcation point has been scarcely seen in the literature. 
Providing the value $\lambda^{\ast}_{p,q}$ directly is an interesting {\it open question}. 
\end{rem}

Problem \eqref{p} possesses a sublinear nonlinearity at infinity, as well as a concave--convex nature. 
For nonlinear elliptic problems with a concave--convex nature, we refer to \cite{ABC94, Ta95, Al99, ACP01, FGU03, FGU06, Ko13}. 
The sublinear nonlinearity $(-u^q)$ induced by the nonlinear boundary condition of \eqref{p} causes an absorption effect on $\partial\Omega$. 
Sublinear boundary conditions of the $u^q$ type were explored in 
\cite{GA04, imcom2007, GM08, RQU2014, RQU2015}; see \cite{GA04, GM08} for incoming flux on $\partial\Omega$, \cite{imcom2007} for a mixed case of absorption and incoming flux on $\partial\Omega$, and \cite{RQU2014, RQU2015} for an absorption case. 
 
The remainder of this paper is organized as follows. 
For the existence of the subcontinua of nonnegative solutions presented in assertions (II) and (III) of Theorem \ref{mt}, 
we face serious difficulties due to the lack of regularity around $u=0$ for $0<q<1$ and the resonant situation for $\beta_\Omega = 1$. 
It is difficult to apply the standard bifurcation theory from simple eigenvalues \cite{CR71, Ra71} directly. As a matter of fact, the linearized eigenvalue problem for \eqref{p} at $(\lambda,0)$ 
can {\it not} be considered because $u \mapsto (-u^q)$ is not right--differentiable at $u=0$. To overcome these difficulties, we study positive solutions of the following {\it regularized} problem {\it near} the resonance, associated to \eqref{p}.
\begin{align}  \label{p-abz}
\begin{cases}
-\Delta u = \beta u-|u|^{p-1}u & \mbox{ in } \Omega, \\
\frac{\partial u}{\partial \nu}  = -\lambda (u+\alpha)^{q-1}u & \mbox{ on } \partial\Omega, 
\end{cases} 
\end{align}
where $\alpha>0$ and $0<\beta<1$ are constants; thanks to $\alpha>0$, we consider the linearized eigenvalue problem at $u\geq 0$ associated to \eqref{p-abz}, which can be formulated as 
\begin{align} \label{165331} 
\begin{cases}
-\Delta \varphi = \beta \varphi - p|u|^{p-1} \varphi & \mbox{ in } \Omega, \\
\frac{\partial \varphi}{\partial \nu}  = -\lambda \{ (q-1)(u+\alpha)^{q-2}u + (u+\alpha)^{q-1} \}
\varphi & \mbox{ on } \partial\Omega. 
\end{cases} 
\end{align} 
Substituting $u=0$ into \eqref{165331}, we obtain the linear eigenvalue problem 
\begin{align} \label{epromu0}
\begin{cases}
-\Delta \varphi = \beta \varphi & \mbox{ in } \Omega, \\
\frac{\partial \varphi}{\partial \nu}  = -\lambda \alpha^{q-1} \varphi & \mbox{ on } \partial\Omega. 
\end{cases} 
\end{align} 
Then, thanks to $\beta < 1=\beta_{\Omega}$, we consider the smallest eigenvalue $\lambda_{\alpha, \beta}>0$ of \eqref{epromu0}, see \eqref{epromu}. Thus, we apply the standard theory for local and global bifurcation from simple eigenvalues to \eqref{p-abz} at $(\lambda_{\alpha, \beta},0)$ to deduce the existence of bifurcating positive solutions of \eqref{p-abz} from $\{ (\lambda,0)\}$ at this point. 
Then, considering $\alpha \to 0^{+}$, and then, considering $\beta \to 1^{-}$, we exploit the topological method proposed by Whyburn \cite[(9.12) Theorem]{Wh64} to conduct a limiting argument for the bifurcation result of \eqref{p-abz}. 
Such topological argument needs both an {\it a priori} upper bound of norm $\| u \|_{C(\overline{\Omega})}$ and that of parameter $\lambda > 0$ for the positive solutions $(\lambda,u)$ of \eqref{p-abz}, which are established in Sections \ref{sec:ener} and \ref{sec:bounds} by exploiting the energy approach prepared in Section \ref{sec:ener}. 
Section \ref{sec:asymp} is devoted to the study of the limiting behavior for the positive solutions $(\lambda, u)$ of \eqref{p-abz} as $\lambda \to 0^{+}$ or $\| u\|_{C(\overline{\Omega})} \rightarrow 0$, which supports our bifurcation theory. 
We then prove assertions (II) and (III) of Theorem \ref{mt} in Section \ref{sec:continuum}, and then, we prove assertion (I) of Theorem \ref{mt} in Section \ref{sec:sub} using the sub-- and supersolution method. 
Finally, in the appendix, we raise the question if assertion (I-iii) of Theorem \ref{mt} is consistent with \eqref{convtouD} and come up with it in a way using a domain perturbation technique.

\begin{description}
\item[Notation] 
$\| \cdot \|_D$ denotes the usual norm of $H^1(D)$ for a bounded domain $D$ with a smooth boundary.  $\| \cdot \|_{\Omega}$ is written by $\| \cdot \|$. $H^1_0(\Omega)=\{ u\in H^1(\Omega) : u=0 \mbox{ on } \partial\Omega\}$. 
$\| \cdot\|_{C(\overline{\Omega})}$ denotes the usual norm of  $C(\overline{\Omega})$. $u_n \rightharpoonup u_\infty$ means that $u_n$ weakly converges to $u_\infty$ in $H^1(\Omega)$ or $H^1_0(\Omega)$. $\int_{\Omega} f dx$ for $f \in L^1(\Omega)$ and $\int_{\partial\Omega}g d\sigma$ for $g\in L^1(\partial\Omega)$ are written by $\int_{\Omega}f$ and $\int_{\partial\Omega}g$, respectively. 
$|\cdot|$ represents both the Lebesgue measure in $\Omega$ and the surface measure on $\partial\Omega$. 
\end{description}

%---------------------------------
\section{Energy approach} 

\label{sec:ener}

First, we prove the existence of an upper bound of $\| \cdot\|_{C(\overline{\Omega})}$ for the positive solutions of the problem 
\begin{align} \label{p-ab}
\begin{cases}
-\Delta u = \beta u-u^{p} & \mbox{ in } \Omega, \\
\frac{\partial u}{\partial \nu}  = -\lambda (u+\alpha)^{q-1}u & \mbox{ on } \partial\Omega, 
\end{cases} 
\end{align}
where $\alpha\geq0$ and $0<\beta\leq1$. 
When $\alpha = 0$ and $\beta = 1$, \eqref{p-ab} is considered as \eqref{p}. 
Proposition \ref{lem:albet:bddnorm} holds unconditionally in the values of  $\beta_\Omega$, $\alpha$, or $\beta$.

\begin{prop} \label{lem:albet:bddnorm} 
If $u$ is a positive solution of \eqref{p-ab} for $\lambda >0$, then $u<1$ in $\overline{\Omega}$. 
\end{prop}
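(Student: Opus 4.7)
The plan is to argue in two steps: first establish the non-strict bound $u\leq 1$ in $\overline{\Omega}$ by an energy/weak-formulation argument that exploits the sign of $\beta u-u^{p}$ on $\{u>1\}$ together with the sign of the boundary term, and then upgrade to the strict inequality by applying the strong maximum principle and Hopf's boundary point lemma to $v:=1-u$, using the fact that the boundary condition is incompatible with $u\equiv 1$.

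For the first step, I would test the weak form of \eqref{p-ab} against $w:=(u-1)^{+}\in H^{1}(\Omega)$, obtaining
\[
\int_{\Omega}|\nabla w|^{2}=\int_{\Omega}(\beta u-u^{p})\,w-\lambda\int_{\partial\Omega}(u+\alpha)^{q-1}u\,w.
\]
On the set $\{u>1\}$ one has $\beta u-u^{p}\leq u-u^{p}<0$ since $\beta\leq 1$ and $p>1$, while the boundary integrand is clearly non-negative; hence the right-hand side is $\leq 0$, and since the left-hand side is $\geq 0$, both sides must vanish. Connectedness of $\Omega$ then forces $w$ to be constant; a positive constant would give $u\equiv 1+c>1$ and make the volume integral strictly negative, which is impossible, so $w\equiv 0$ and $u\leq 1$ in $\overline{\Omega}$.

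For the second step, set $v:=1-u\geq 0$ and introduce the coefficient
\[
a(x):=\frac{u(x)^{p}-1}{u(x)-1}, \qquad a(x):=p \text{ when } u(x)=1,
\]
which is bounded on $\overline{\Omega}$ by the result of Step 1. A direct computation rewrites the equation as
\[
-\Delta v+\bigl(a(x)-\beta\bigr)v=1-\beta\geq 0 \quad\text{in }\Omega,
\]
so, after adding a large constant to the zero-th order coefficient, the strong minimum principle and Hopf's lemma apply. Suppose $v(x_{0})=0$. If $x_{0}\in\Omega$, the strong minimum principle gives $v\equiv 0$, i.e.\ $u\equiv 1$, which is incompatible with the boundary condition: the constant function has $\partial_{\nu}u\equiv 0$, whereas the equation requires $\partial_{\nu}u\equiv -\lambda(1+\alpha)^{q-1}<0$. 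If $x_{0}\in\partial\Omega$, Hopf's lemma yields $\partial_{\nu}v(x_{0})<0$, that is $\partial_{\nu}u(x_{0})>0$, again in contradiction with $\partial_{\nu}u(x_{0})=-\lambda(1+\alpha)^{q-1}<0$. Hence $u<1$ throughout $\overline{\Omega}$.

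The only subtle point concerns the applicability of Hopf's lemma at a boundary point where $u=1$: since $u$ is continuous and strictly positive in a neighborhood of such a point, the nonlinear boundary term $-\lambda(u+\alpha)^{q-1}u$ is smooth there, and the failure of the one-sided Lipschitz condition at $u=0$ emphasized in the introduction does not interfere. Thus the proof relies only on the classical strong maximum principle and Hopf lemma, applied after the standard linearization of the nonlinearity about $u=1$.
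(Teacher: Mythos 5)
Your proof is correct, and it reaches the same endgame as the paper by a partly different route. The paper runs a single contradiction argument on $M:=\max_{\overline{\Omega}}u\geq 1$: if the maximum is interior, a comparison of $u$ with the constant $M$ via $(-\Delta+K)(M-u)\geq 0$ and the strong maximum principle forces $u\equiv M$, which is incompatible with the boundary condition; if the maximum is on $\partial\Omega$, Hopf's lemma gives $\partial_\nu u(x_0)>0$, contradicting $\partial_\nu u(x_0)=-\lambda(u(x_0)+\alpha)^{q-1}u(x_0)<0$. Your Step 2 is essentially this argument with $M$ replaced by $1$ (linearizing $u^p$ about $1$ rather than comparing with the constant $M$), and you correctly note the regularity point the paper also makes: near a boundary point where $u=1$ the solution is bounded away from $0$, so the boundary nonlinearity is smooth there and the local bootstrap gives the $C^1$ regularity needed for Hopf. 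What is genuinely different is your Step 1: testing the weak formulation with $(u-1)^{+}$ yields $u\leq 1$ directly at the level of $H^1$ weak solutions, using only the signs of $\beta u-u^p$ on $\{u>1\}$ and of the boundary term, with no pointwise maximum principle. This buys a non-strict bound under minimal regularity (and in fact makes the case distinction interior/boundary maximum unnecessary for that part), at the cost of then needing a second, separate step to upgrade to strict inequality — which is exactly where the classical machinery (strong minimum principle after adding $Kv$, Hopf's lemma, and the incompatibility of the constant $1$ with the boundary condition for $\lambda>0$) re-enters, just as in the paper. Both arguments are complete; yours is marginally longer but isolates the portion of the conclusion that survives with only weak regularity.
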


\begin{proof}
Let $M\geq1$ be a constant, and then, $-\Delta M =0\geq f_{\beta}(M)$ in $\Omega$ where 
$f_\beta (t)=\beta t-t^{p}$ for $t\geq0$. 
For $\lambda>0$, we assume by contradiction that $M:= \max_{\overline{\Omega}}u\geq1$ for a positive solution $(\lambda, u)$ of \eqref{p-ab}. 
Assume that $u(x_0)=M$ for some $x_0\in \Omega$, and then, $K>0$ should be selected such that $Kt + f_\beta (t)$ increases for $t\in[0, M]$. Hence, we deduce that 
\begin{align*}
(-\Delta + K)(M-u)\geq (KM+f_\beta (M))-(Ku+f_\beta (u))\geq 0 \ \mbox{ in } 
\Omega. 
\end{align*}
Since $u \in C^2(\Omega)\cap C(\overline{\Omega})$, the strong maximum principle applies, and then, $M-u$ is identically equal to zero in $\overline{\Omega}$, i.e., $u\equiv M$ in $\overline{\Omega}$, which is contradictory for the nonlinear boundary condition. Hence, $x_0 \in \partial\Omega$, i.e., $u(x_0)=M\geq1$. For this, 
we know that $u \in C^1$ in a neighborhood of $x_0$ by the bootstrap argument based on the fact that $u \in W^{1,r}(\Omega)$ with $r>N$; thus, Hopf's boundary point lemma applies at $x_0$. Then, we arrive at the contradiction that 
\begin{align*}
0>-\lambda (u(x_0)+\alpha)^{q-1}u(x_0) =\frac{\partial u}{\partial \nu}(x_0)>0. 
\end{align*} 
\end{proof}

For $0<\beta\leq1$, let 
\[
E_\beta (u)=\int_\Omega \biggl( |\nabla u|^2 - \beta u^2 \biggr), \quad u \in H^1(\Omega), 
\]
where $E_{\beta}(\cdot)$ is written by $E(\cdot)$ when $\beta = 1$. 
Note that $E(u)\leq E_{\beta}(u)$. 
Then, the following lemma is useful in the sequel.

\begin{lem} \label{lem1719}
Let $\{ u_n \}\subset H^1(\Omega)$ satisfy that $E(u_n)\leq0$, $u_n\rightharpoonup u_\infty$, and $u_n \rightarrow u_\infty$ in $L^2(\Omega)$. 
Then, $u_\infty\neq 0$ if $\| u_n\|\geq C$ for some $C>0$. 
\end{lem}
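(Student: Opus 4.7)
The plan is to argue by contradiction, starting from the supposition that $u_\infty=0$ and deriving from the two hypotheses $E(u_n)\leq 0$ and $u_n\to u_\infty$ in $L^2(\Omega)$ that $\|u_n\|\to 0$, which contradicts the uniform lower bound $\|u_n\|\geq C>0$.

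Assuming $u_\infty = 0$, the strong $L^2$-convergence immediately gives $\int_\Omega u_n^2 \to 0$. I would then use $E(u_n)\leq 0$, rewritten as
\[
\int_\Omega |\nabla u_n|^2 \leq \int_\Omega u_n^2,
\]
to conclude $\int_\Omega |\nabla u_n|^2 \to 0$. Adding the two estimates yields $\|u_n\|^2=\int_\Omega |\nabla u_n|^2+\int_\Omega u_n^2 \to 0$, contradicting the assumption $\|u_n\|\geq C>0$. Therefore $u_\infty \neq 0$.

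There is no serious obstacle in this proof; the statement is essentially the observation that the quadratic form $E$ associated with the resonant value $\beta_\Omega=1$ controls the gradient by the $L^2$-norm whenever it is nonpositive, so once the $L^2$-mass escapes to zero the full $H^1$-norm must follow. The only point worth stressing in the write-up is that the hypothesis $u_n\rightharpoonup u_\infty$ in $H^1(\Omega)$ is not actually needed for this argument; only the strong $L^2$-convergence and the sign of $E(u_n)$ are used.
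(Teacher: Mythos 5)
Your proof is correct and follows essentially the same route as the paper: assume $u_\infty=0$ and deduce $\|u_n\|\to 0$ from $E(u_n)\leq 0$ together with the strong $L^2$-convergence, contradicting $\|u_n\|\geq C$. The only (harmless) difference is that the paper passes through weak lower semi--continuity of $\int_\Omega|\nabla\cdot|^2$ to get $E(u_n)\to E(u_\infty)=0$, whereas you bound $\int_\Omega|\nabla u_n|^2\leq\int_\Omega u_n^2$ directly, which is slightly more elementary and, as you note, dispenses with the weak convergence hypothesis.
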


\begin{proof} 
By the weak lower semi--continuity, $E(u_\infty)\leq \varliminf_{n} E(u_n)\leq \varlimsup_{n} E(u_{n}) \leq0$. If $u_{\infty}=0$, then $\| u_{n} \| \rightarrow 0$, as desired. 
\end{proof}

In the sequel, we consider positive solutions of \eqref{p-ab} with $\alpha = 0$: 
\begin{align} \label{p-al0}
\begin{cases}
-\Delta u = \beta u-u^{p} & \mbox{ in } \Omega, \\
\frac{\partial u}{\partial \nu}  = -\lambda u^{q} & \mbox{ on } \partial\Omega. 
\end{cases}    
\end{align}
Then, we start by proving the following two propositions, which provide the asymptotic profile of a positive solution of \eqref{p-al0} as $\lambda\to \infty$.

\begin{prop} \label{prop:asympt}
Assume that $\beta_\Omega =1$. Let $(\lambda_n, \beta_n, u_n)$ be a positive solution of \eqref{p-al0} with $\lambda_n \to\infty$ and $\beta_0\leq \beta_n \leq1$ for some $0<\beta_{0}<1$. Then, $u_n \rightarrow 0$ in $H^1(\Omega)$. 
\end{prop}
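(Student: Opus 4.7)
The plan is to combine the uniform $L^\infty$-bound from Proposition \ref{lem:albet:bddnorm} with a compactness and identification argument in $H^1(\Omega)$, and then to upgrade weak convergence to strong convergence via the energy identity. First I would test \eqref{p-al0} against $u_n$ itself to obtain
\[
\int_\Omega |\nabla u_n|^2 + \lambda_n \int_{\partial\Omega} u_n^{q+1} = \beta_n \int_\Omega u_n^2 - \int_\Omega u_n^{p+1} \leq \beta_n |\Omega|.
\]
Since $0 < u_n < 1$ in $\overline{\Omega}$, this shows that $\{u_n\}$ is bounded in $H^1(\Omega)$ and, because $\lambda_n \to \infty$, that $\int_{\partial\Omega} u_n^{q+1} \to 0$.

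Along a subsequence I would extract $u_n \rightharpoonup u_\infty$ in $H^1(\Omega)$, $u_n \to u_\infty$ strongly in $L^2(\Omega)$ and in $L^{p+1}(\Omega)$ (Rellich together with the uniform $L^\infty$ bound), $u_n \to u_\infty$ in $L^{q+1}(\partial\Omega)$ (compactness of the trace map for $q+1 < 2$), and $\beta_n \to \beta \in [\beta_0, 1]$. Uniqueness of limits combined with $\int_{\partial\Omega} u_n^{q+1} \to 0$ forces $u_\infty = 0$ on $\partial\Omega$, hence $u_\infty \in H^1_0(\Omega)$. Passing to the limit in the weak formulation with test functions $\varphi \in H^1_0(\Omega)$, the boundary integral disappears and one obtains
\[
\int_\Omega \nabla u_\infty \cdot \nabla \varphi = \beta \int_\Omega u_\infty \varphi - \int_\Omega u_\infty^p \varphi,
\]
so $u_\infty \geq 0$ is a weak solution of the Dirichlet problem $-\Delta u = \beta u - u^p$ in $\Omega$, $u = 0$ on $\partial\Omega$. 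Since $\beta \leq 1 = \beta_\Omega$, the nonexistence statement recalled in the introduction (following \cite{BO86}) yields $u_\infty \equiv 0$.

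With the weak limit identified, strong convergence follows at once from the same energy identity: because $u_n \to 0$ in $L^2(\Omega)$ and in $L^{p+1}(\Omega)$, and the boundary term is nonnegative,
\[
\int_\Omega |\nabla u_n|^2 \leq \beta_n \int_\Omega u_n^2 - \int_\Omega u_n^{p+1} \longrightarrow 0,
\]
so $\| u_n \| \to 0$. As every subsequence of the original $\{ u_n \}$ admits a further subsequence converging to $0$ in $H^1(\Omega)$, the whole sequence converges.

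The main obstacle is the identification step at the resonance threshold $\beta_\Omega = 1$: the hypothesis $\beta_n \leq 1$ is sharp, since any $\beta > \beta_\Omega$ would furnish the positive Dirichlet solution $u_{\mathcal{D}}$ as a possible weak limit and destroy the conclusion. The lower bound $\beta_n \geq \beta_0 > 0$ plays no essential role here beyond preventing a degenerate limit equation; the crux is the interplay between the absorbing boundary term, which via trace compactness pins $u_\infty$ into $H^1_0(\Omega)$, and the Dirichlet nonexistence precisely at the resonance value $\beta = \beta_\Omega$.
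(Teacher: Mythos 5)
Your proposal is correct and follows essentially the same route as the paper: the energy identity gives the $H^1$ bound and forces the boundary integral to vanish, the weak limit is identified as a nonnegative Dirichlet solution killed by the Brezis--Oswald nonexistence result at $\beta_\infty \leq \beta_\Omega = 1$, and the energy identity is then revisited to upgrade to strong convergence. The only cosmetic difference is that you carry out the final upgrade directly, whereas the paper packages it as Lemma \ref{lem1719}.
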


\begin{proof} 
By definition, 
\begin{align} \label{121404}
\int_{\Omega} \biggl( \nabla u_n \nabla \varphi - \beta_n u_n\varphi + u_n^p \varphi \biggr) + \lambda_n \int_{\partial\Omega} u_n^{q} \varphi=0, \quad \varphi \in H^1(\Omega). 
\end{align}
Substitute $\varphi = u_n$ into \eqref{121404}, and then, 
\begin{align} \label{121004}
\int_{\Omega} \biggl( |\nabla u_n|^2 - \beta_n u_n^2 + u_n^{p+1} \biggr) + \lambda_n \int_{\partial\Omega} u_n^{q+1}=0. 
\end{align}
Because $u_{n}<1$ in $\overline{\Omega}$ from Proposition \ref{lem:albet:bddnorm}, \eqref{121004} provides 
\begin{align*} 
\int_{\Omega}|\nabla u_n|^2 \leq \beta_n \int_{\Omega} u_n^2 \leq |\Omega|;  %\label{ener0756} 
\end{align*}
thus, $\| u_n\|$ is bounded. Up to a subsequence, $u_n \rightharpoonup u_\infty\geq0$, $u_n \rightarrow u_\infty$ in $L^2(\Omega)$ and $L^2(\partial\Omega)$, and $u_n \rightarrow u_\infty$ 
a.e.\ in $\Omega$ for some $u_\infty \in H^1(\Omega)$. 
We may assume that $\beta_n \rightarrow \beta_\infty \in [\beta_0, 1]$. 
\eqref{121004} also provides 
\begin{align*}       
\int_{\partial\Omega}u_n^{q+1} \leq  
\frac{\beta_n}{\lambda_n}\int_{\Omega} u_n^2 \longrightarrow 0, 
\end{align*}
which implies that $\int_{\partial\Omega}u_\infty^{q+1}=0$, and $u_\infty\in H^1_0(\Omega)$. 

Plugging $\varphi \in H^{1}_{0}(\Omega)$ into \eqref{121404}, 
\begin{align*}
\int_{\Omega} \biggl( \nabla u_n \nabla \varphi - \beta_n u_n \varphi + u_n^p \varphi \biggr) = 0. 
\end{align*}
Taking the limit, $u_\infty$ is a nonnegative solution of the Dirichlet problem
\begin{align*}
\begin{cases}
-\Delta u = \beta_\infty u-u^{p} & \mbox{ in } \Omega, \\
u=0 & \mbox{ on } \partial\Omega, 
\end{cases}    
\end{align*}
where we used the Lebesgue dominated convergence theorem to deduce that $\int_{\Omega} u_n^p \varphi \rightarrow \int_{\Omega} u_\infty^p \varphi$. Since $\beta_\infty \leq 1= 
\beta_\Omega$, $u_\infty=0$ (\cite{BO86}).  
Lastly, Lemma \ref{lem1719} applies, and $u_n \rightarrow 0$ in $H^1(\Omega)$ because 
$E(u_n)\leq E_{\beta_n}(u_n)\leq0$. 
\end{proof}

When $\beta_\Omega=1$, we observe from \eqref{lamOcha} that $E(u)\geq 0$ for $u\in H^{1}_{0}(\Omega)$. Moreover, we find that 
$\left\{ u\in H^{1}_{0}(\Omega) : E(u)=0 \right\} = \langle \phi_\Omega \rangle := 
\{ s \phi_\Omega : s \in \mathbb{R} \}$. Then, we investigate the asymptotic profile of a positive solution $u$ of \eqref{p-al0} satisfying $\| u \|\rightarrow 0$.

\begin{prop} \label{prop:unprofi}
Assume that $\beta_\Omega=1$. Let $(\lambda_n, \beta_n, u_n)$ be a positive solution of \eqref{p-al0} such that $\lambda_n \geq \underline{\lambda}$ for some $\underline{\lambda}>0$, $\beta_{0}\leq \beta_{n} \leq 1$ for some $0<\beta_{0}<1$, and $\| u_n \| \rightarrow 0$.  Then, $\frac{u_n}{\| u_n \|} \rightarrow \phi_\Omega$ in $H^1(\Omega)$. 
\end{prop}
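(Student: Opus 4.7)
The plan is to set $v_n := u_n / \| u_n \|$, so $\| v_n \| = 1$, and to identify its weak limit as $\phi_\Omega$. Since $\{v_n\}$ is bounded in $H^1(\Omega)$, up to a subsequence we have $v_n \rightharpoonup v_\infty$ in $H^1(\Omega)$, $v_n \rightarrow v_\infty$ in $L^2(\Omega)$ and in $L^{q+1}(\partial\Omega)$, and $\beta_n \to \beta_\infty \in [\beta_0, 1]$. The pivotal identity is obtained by testing the weak formulation of \eqref{p-al0} against $\varphi = u_n$ and dividing by $\| u_n \|^2$:
\begin{align*}
E_{\beta_n}(v_n) + \| u_n \|^{p-1} \int_\Omega v_n^{p+1} + \lambda_n \| u_n \|^{q-1} \int_{\partial\Omega} v_n^{q+1} = 0.
\end{align*}

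From this identity I would first deduce $E_{\beta_n}(v_n) \leq 0$, hence $E(v_n) \leq E_{\beta_n}(v_n) \leq 0$, and Lemma \ref{lem1719} then gives $v_\infty \neq 0$. The same identity yields $\lambda_n \| u_n \|^{q-1} \int_{\partial\Omega} v_n^{q+1} \leq \beta_n \int_\Omega v_n^2 \leq |\Omega|$; combined with $\lambda_n \geq \underline{\lambda}$ and $\| u_n \|^{q-1} \to \infty$ (since $q<1$ and $\| u_n \| \to 0$), this forces $\int_{\partial\Omega} v_n^{q+1} \to 0$. By compactness of the trace map we conclude $v_\infty = 0$ on $\partial\Omega$, so $v_\infty \in H^1_0(\Omega)$.

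Next, I would divide the weak formulation by $\| u_n \|$ and test with $\varphi \in H^1_0(\Omega)$. The bulk power term becomes $\| u_n \|^{p-1} \int_\Omega v_n^p \varphi$, which vanishes in the limit by the Sobolev embedding and the uniform $H^1$-bound on $v_n$; passing to the limit gives $-\Delta v_\infty = \beta_\infty v_\infty$ in $\Omega$ with $v_\infty = 0$ on $\partial\Omega$. Because $v_\infty \not\equiv 0$, $\beta_\infty$ must be a Dirichlet eigenvalue in $[\beta_0, 1]$; since $\beta_\Omega = 1$ is the smallest such eigenvalue and is simple, one concludes $\beta_\infty = 1$ and $v_\infty = s \phi_\Omega$ with $s > 0$ (using that $v_n > 0$ forces $v_\infty \geq 0$).

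It remains to upgrade weak to strong $H^1$-convergence and pin down $s = 1$. Testing with $\varphi = v_n$ gives $\int_\Omega |\nabla v_n|^2 \leq \beta_n \int_\Omega v_n^2$; taking $\limsup$ and using the normalization $\int_\Omega |\nabla \phi_\Omega|^2 = \beta_\Omega \int_\Omega \phi_\Omega^2 = \int_\Omega \phi_\Omega^2$ together with the strong $L^2$-convergence, one obtains $\limsup_n \int_\Omega |\nabla v_n|^2 \leq \int_\Omega v_\infty^2 = s^2 \int_\Omega \phi_\Omega^2 = \int_\Omega |\nabla v_\infty|^2$. Combined with weak lower semi-continuity, this yields $\int_\Omega |\nabla v_n|^2 \to \int_\Omega |\nabla v_\infty|^2$, whence $v_n \to v_\infty$ strongly in $H^1(\Omega)$. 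Taking norms, $1 = \| v_\infty \| = s \|\phi_\Omega\| = s$, so $v_\infty = \phi_\Omega$, and since every convergent subsequence has the same limit the entire sequence converges. The main delicate step is this last upgrade to strong convergence, for which the sharp $\limsup$-bound via the Rayleigh quotient for the simple eigenvalue $\beta_\Omega = 1$ is essential.
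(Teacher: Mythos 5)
Your argument is correct, and its skeleton (normalize $v_n=u_n/\|u_n\|$, extract a weak limit, use $\lambda_n\geq\underline{\lambda}$ and $\|u_n\|^{q-1}\to\infty$ to kill the boundary trace and place $v_\infty$ in $H^1_0(\Omega)$, then upgrade to strong convergence via $\int_\Omega|\nabla v_n|^2\leq\beta_n\int_\Omega v_n^2$ and weak lower semicontinuity) coincides with the paper's. Where you genuinely diverge is in pinning down the limit: you pass to the limit in the weak formulation against $\varphi\in H^1_0(\Omega)$, obtain the Dirichlet eigenvalue problem $-\Delta v_\infty=\beta_\infty v_\infty$, and invoke the simplicity and positivity of the principal Dirichlet eigenfunction to force $\beta_\infty=1$ and $v_\infty=s\phi_\Omega$, whereas the paper never passes to the limit in the PDE at all --- it runs the purely variational sandwich $0\leq E(v_\infty)\leq\varliminf_n E(v_n)\leq\varlimsup_n E(v_n)\leq 0$ (using \eqref{lamOcha} with $\beta_\Omega=1$ on $H^1_0(\Omega)$) and then identifies $v_\infty$ through the fact, recorded just before the proposition, that $\{u\in H^1_0(\Omega):E(u)=0\}=\langle\phi_\Omega\rangle$. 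The paper's route is more economical (the sandwich delivers $E(v_n)\to E(v_\infty)=0$, hence strong convergence and the identification in one stroke, with no need to handle the $\|u_n\|^{p-1}\int_\Omega v_n^p\varphi$ term or the subcritical Sobolev embedding); your route is slightly longer but self-contained in that it does not presuppose the characterization of the null set of $E$ on $H^1_0(\Omega)$, needs $v_\infty\neq0$ up front (which you correctly obtain from Lemma \ref{lem1719}), and additionally yields $\beta_\infty=1$ explicitly. Both are valid proofs of Proposition \ref{prop:unprofi}.
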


\begin{proof}
For $w_n = \frac{u_n}{\| u_n\|}$, $\| w_n \|=1$; then, up to a subsequence, $w_n \rightharpoonup w_\infty\geq0$ and $w_n \rightarrow w_\infty$ in $L^2(\Omega)$ and $L^2(\partial\Omega)$ for some $w_\infty \in H^1(\Omega)$. Then, 
\eqref{121004} deduces 
\begin{align*} % \label{un1227}
\underline{\lambda}\int_{\partial\Omega} u_n^{q+1} \leq 
\int_\Omega u_n^2, 
\end{align*}
and then, the condition $\| u_n\| \rightarrow 0$ deduces
\begin{align*}
\int_{\partial\Omega}w_n^{q+1}\leq \frac{\| u_n\|^{1-q}}{\underline{\lambda}}\int_\Omega w_n^2 \longrightarrow 0. 
\end{align*}
Thus, $\int_{\partial\Omega}w_\infty^{q+1}=0$, and $w_\infty \in H^1_0(\Omega)$. 

Since $E(w_n)\leq 0$, we exploit the weak lower semi--continuity and the condition $\beta_\Omega=1$ to deduce that 
\begin{align*}
0\leq E(w_\infty)\leq \varliminf_{n\to \infty} E(w_n) \leq \varlimsup_{n\to \infty} E(w_n)\leq0, 
\end{align*}
which implies that $E(w_n)\rightarrow E(w_\infty)=0$, i.e., 
$\| w_n\| \rightarrow \| w_\infty\|$. 
Since $w_{n} \rightharpoonup w_{\infty}$, $w_n \rightarrow w_\infty$ in $H^1(\Omega)$; therefore, $w_\infty = \phi_\Omega$.  The desired conclusion follows because $\phi_\Omega$ is unique.  
\end{proof}

For further analysis of a positive solution $(\lambda_n, \beta_n, u_n)$ of \eqref{p-al0} with the conditions of Proposition \ref{prop:unprofi}, we use the orthogonal decomposition $H^1(\Omega) = \langle \phi_\Omega \rangle \oplus V$ by $\langle \phi_\Omega \rangle$, where $V$ denotes the orthogonal complement of $\langle \phi_\Omega \rangle$ given explicitly as 
\begin{align*}
V = \left\{ v \in H^1(\Omega) : \int_\Omega \biggl(\nabla v \nabla \phi_\Omega + v \phi_\Omega \biggr) = 0 \right\}. 
\end{align*}
We note that $\langle \phi_\Omega \rangle$ and $V$ are both closed subspaces of $H^1(\Omega)$, and $\|u\|$ is equivalent to $|s|+\|v\|$ for $u=s\phi_\Omega + v \in H^1(\Omega)=\langle \phi_\Omega\rangle\oplus V$. 
When assuming $\beta_\Omega = 1$, we consider the orthogonal decomposition 
\begin{align} \label{vn1557}
u_n = s_n \phi_\Omega + v_n \in \langle \phi_\Omega \rangle \oplus V \vspace{3pt} \end{align}  
for a positive solution $(\lambda_n, \beta_{n}, u_n)$ of \eqref{p-al0} meeting the conditions of Proposition \ref{prop:unprofi}. 
Since $\frac{u_n}{\| u_n\|} \rightarrow \phi_\Omega$ in $H^1(\Omega)$ from Proposition \ref{prop:unprofi}, it follows that 
\begin{align}
&\frac{s_n}{\|u_n\|}\longrightarrow 1, \label{sn1145}\\
&\frac{\| v_n\|}{\| u_n\|}\longrightarrow 0, \label{vn1145}\\
&\frac{\| v_n\|}{s_n} \longrightarrow 0. \label{vnsn1145}
\end{align}
Because of \eqref{sn1145}, we may assume that $s_n>0$. Note that $v_n\geq 0$ on $\partial\Omega$ because $\phi_\Omega=0$ on $\partial\Omega$. 

Then, we prove the following {\it crucial} result. 

%------------------------
\begin{lem}  \label{prop:E} 
Assume that $\beta_\Omega=1$. Let $\{ v_n\}\subset V$ be as introduced by \eqref{vn1557}. Then, there exists $c>0$ such that 
\begin{align} \label{Evn1911}
E(v_n) + c \int_{\partial\Omega} v_n^{q+1}\leq 0 \quad\mbox{ 
for a sufficiently large $n$, }
\end{align}
provided that one of the following conditions is satisfied. 
\begin{enumerate} \setlength{\itemsep}{3pt} 
    \item[(a)] $pq<1$,
    \item[(b)] $pq=1$ and $\lambda_n \to \infty$,
    \item[(c)] $\beta_{n}=1$, $pq>1$, and $\lambda_n$ is bounded from above. 
\end{enumerate}
\end{lem}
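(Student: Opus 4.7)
The plan is to derive a single \emph{master identity} for $E(v_n) + \lambda_n \int_{\partial\Omega} v_n^{q+1}$ and then bound its right--hand side case by case. Writing $b := -\partial \phi_\Omega/\partial\nu > 0$ on $\partial\Omega$, I would first plug the decomposition \eqref{vn1557} into the energy identity $E_{\beta_n}(u_n) + \int_\Omega u_n^{p+1} + \lambda_n \int_{\partial\Omega} u_n^{q+1} = 0$ obtained by testing \eqref{p-al0} with $u_n$ itself. The assumption $\beta_\Omega = 1$ gives $E(\phi_\Omega) = 0$, while integration by parts using $-\Delta\phi_\Omega = \phi_\Omega$ and $\phi_\Omega|_{\partial\Omega}=0$ reduces the $\phi_\Omega$--$v_n$ cross term to $\int_\Omega \nabla\phi_\Omega\cdot\nabla v_n - \int_\Omega \phi_\Omega v_n = -\int_{\partial\Omega} b v_n$; hence $E(u_n) = -2 s_n \int_{\partial\Omega} b v_n + E(v_n)$. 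Combining this with $E_{\beta_n}(u_n) = E(u_n) + (1-\beta_n)\int_\Omega u_n^2$ and the fact that $u_n = v_n$ on $\partial\Omega$ yields
\begin{align*}
E(v_n) + \lambda_n \int_{\partial\Omega} v_n^{q+1} = 2 s_n \int_{\partial\Omega} b\, v_n - (1-\beta_n) \int_\Omega u_n^2 - \int_\Omega u_n^{p+1}.
\end{align*}
A second universal ingredient: Proposition \ref{prop:unprofi} and \eqref{sn1145} give $u_n/s_n \to \phi_\Omega$ in $H^1(\Omega)\hookrightarrow L^{p+1}(\Omega)$, hence $\int_\Omega u_n^{p+1} \geq c_0 s_n^{p+1}$ for some fixed $c_0 > 0$ and all large $n$.

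For cases (a) and (b), the coupling $2 s_n \int_{\partial\Omega} b v_n$ is controlled by a (possibly $\lambda_n$--scaled) Young's inequality. In case (a), plain Young's with small $\epsilon$ gives $2 s_n \int b v_n \leq 2\epsilon \int_{\partial\Omega} v_n^{q+1} + C_\epsilon s_n^{(q+1)/q}$; absorbing the $\epsilon$ part into $\lambda_n \int_{\partial\Omega} v_n^{q+1}$ (choose $c,\epsilon$ with $c + 2\epsilon < \underline{\lambda}$) reduces the claim to $E(v_n) + c \int_{\partial\Omega} v_n^{q+1} \leq C_\epsilon s_n^{(q+1)/q} - c_0 s_n^{p+1}$, which is negative for large $n$ because $pq<1$ is equivalent to $(q+1)/q > p+1$, so $s_n^{(q+1)/q} = o(s_n^{p+1})$. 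For case (b), I would rescale Young's by $\lambda_n$ to get $2 s_n \int b v_n \leq \tfrac{\lambda_n}{2} \int_{\partial\Omega} v_n^{q+1} + C s_n^{(q+1)/q}/\lambda_n^{1/q}$; since $pq = 1$ makes the remainder $C s_n^{p+1}/\lambda_n^{1/q}$, and $\lambda_n \to \infty$ forces this to be $o(s_n^{p+1})$, the term $c_0 s_n^{p+1}$ again swallows it. Any fixed $c > 0$ works because $\lambda_n/2 \geq c$ eventually.

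Case (c) is the genuinely delicate one: now $pq > 1$ reverses the exponent comparison to $(q+1)/q < p+1$, so the Young remainder $s_n^{(q+1)/q}$ would \emph{dominate} $s_n^{p+1}$ and cannot be absorbed. My plan is instead to produce a matching \emph{lower} bound on $\int_{\partial\Omega} v_n^{q+1}$ by exploiting the boundary equation directly. With $\beta_n = 1$, the perturbation $v_n$ satisfies $-\Delta v_n = v_n - u_n^p$ in $\Omega$ and $\partial v_n/\partial\nu = s_n b - \lambda_n v_n^q$ on $\partial\Omega$; integrating the latter and applying the divergence theorem gives
\begin{align*}
\lambda_n \int_{\partial\Omega} v_n^q = s_n \int_{\partial\Omega} b + \int_\Omega v_n - \int_\Omega u_n^p.
\end{align*}
Since $p > 1$ yields $\int_\Omega u_n^p = O(s_n^p) = o(s_n)$ and \eqref{vn1145} yields $\int_\Omega v_n = o(s_n)$, the right--hand side is $s_n \int_{\partial\Omega} b (1 + o(1))$; together with $\lambda_n \leq \overline{\lambda}$ this gives $\int_{\partial\Omega} v_n^q \geq C s_n$, and H\"older upgrades this to $\int_{\partial\Omega} v_n^{q+1} \geq C' s_n^{(q+1)/q}$. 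Meanwhile, testing \eqref{p-al0} with $\phi_\Omega$ in the case $\beta_n = 1$ produces $\int_{\partial\Omega} b v_n = \int_\Omega u_n^p \phi_\Omega$, so the master identity simplifies to $\int_\Omega u_n^p(s_n\phi_\Omega - v_n) = O(s_n^{p+1})$. For any $c \in (0,\underline{\lambda})$ the negative contribution $-(\underline{\lambda} - c)C' s_n^{(q+1)/q}$ therefore overwhelms the $O(s_n^{p+1})$ positive part, establishing \eqref{Evn1911}. The main obstacle is precisely this last case: Young's inequality is too blunt, and one must exploit the Neumann--type boundary condition itself --- together with the upper bound on $\lambda_n$ --- to extract a sharp lower bound on the boundary $L^{q+1}$ norm of the correct order.
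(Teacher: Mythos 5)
Your proposal is correct in all three cases, and its skeleton --- the energy identity from testing with $u_n$, the orthogonal decomposition, the reduction of the cross term to $-2s_n\int_{\partial\Omega}\bigl(-\frac{\partial\phi_\Omega}{\partial\nu}\bigr)v_n$ via the divergence theorem, and the identities obtained by testing with $\phi_\Omega$ and with $\varphi=1$ --- coincides with the paper's. The genuine divergence is in cases (a) and (b). The paper discards the bulk term $\int_\Omega u_n^{p+1}$ at the outset and instead controls the cross term by the \emph{equation-derived} lower bound $\int_{\partial\Omega}v_n\geq c\,s_n^{p}$ (from the $\phi_\Omega$-test and \eqref{BL}), which after H\"older yields $I_n\geq s_n^{pq}\bigl(\tilde c\lambda_n-c\,s_n^{1-pq}\bigr)\bigl(\int_{\partial\Omega}v_n^{q+1}\bigr)^{1/(q+1)}\geq 0$; you instead \emph{retain} $\int_\Omega u_n^{p+1}\geq c_0 s_n^{p+1}$ (the same \eqref{BL}-type fact) and beat the cross term with a ($\lambda_n$-scaled) Young inequality, exploiting that $pq<1$ (resp.\ $pq=1$ with $\lambda_n\to\infty$) makes the Young remainder $s_n^{(q+1)/q}$ (resp.\ $s_n^{p+1}\lambda_n^{-1/q}$) of smaller order than $s_n^{p+1}$. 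Your route buys a small simplification: the $\phi_\Omega$-test identity \eqref{vnsn1033} is not needed for (a) and (b) at all, only for (c). In case (c) the two arguments are essentially identical --- the lower bound $\int_{\partial\Omega}v_n^{q+1}\geq C's_n^{(q+1)/q}$ from the $\varphi=1$ test plus H\"older (where you rightly use $\lambda_n\leq\overline\lambda$), against the $O(s_n^{p+1})$ upper bound coming from the equality form of the $\phi_\Omega$-test when $\beta_n=1$, with $pq>1$ deciding the competition. One housekeeping remark: your absorption arguments use $\lambda_n\geq\underline\lambda$, which is indeed available since \eqref{vn1557} presupposes the hypotheses of Proposition \ref{prop:unprofi}, but it is worth saying so explicitly.
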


\begin{proof}
Because $E_{\beta_{n}}(u_{n})\geq E(u_{n})$, \eqref{121004} deduces 
\[
E(u_n) + \int_\Omega u_{n}^{p+1} +  \lambda_{n}\int_{\partial\Omega}u_{n}^{q+1}\leq0. 
\]
Substituting $u_n=s_n\phi_\Omega + v_n$ yields 
\begin{align} \label{eq1211}
2s_n \left(\int_\Omega \nabla \phi_\Omega \nabla v_n - \phi_\Omega v_n \right) + E(v_n) + \int_\Omega (s_n\phi_\Omega + v_n)^{p+1} + \lambda_n \int_{\partial\Omega} v_n^{q+1} \leq 0. 
\end{align}
By the use of the divergence theorem for $\int_{\Omega}-\Delta \phi_{\Omega}v_{n}$, 
\begin{align} \label{vn1524}
\int_\Omega \phi_\Omega v_n = \int_\Omega -\Delta \phi_\Omega v_n = \int_\Omega \nabla \phi_\Omega \nabla v_n + \int_{\partial\Omega} \biggl( -\frac{\partial \phi_\Omega}{\partial \nu} \biggr)  v_n. 
\end{align}
Thus, \eqref{eq1211} implies that
\begin{align*}
-2s_n \int_{\partial\Omega} \biggl( -\frac{\partial \phi_\Omega}{\partial \nu} \biggr) 
v_n + E(v_n) + \int_\Omega (s_n\phi_\Omega + v_n)^{p+1} + \lambda_n \int_{\partial\Omega} v_n^{q+1} \leq 0,      
\end{align*}
from which 
\begin{align} \label{En1034}
E(v_n) + \frac{\lambda_n}{2} \int_{\partial\Omega} v_n^{q+1} + I_n \leq 0 
\end{align}
with 
\begin{align} \label{In1034}
I_n = \frac{\lambda_n}{2}\int_{\partial\Omega} v_n^{q+1} - 2s_n \int_{\partial\Omega} \left( -\frac{\partial \phi_\Omega}{\partial \nu} \right)v_n.  
\end{align}
Once we get 
\begin{align} \label{In1338}
I_n \geq 0 \quad \mbox{ for a sufficiently large $n$, }
\end{align}
we obtain \eqref{Evn1911} since $\lambda_{n}\geq\underline{\lambda}$, and the proof is complete. 

To verify \eqref{In1338}, we plug $\varphi = \phi_\Omega$ into \eqref{121404}: 
\begin{align} \label{unphOm1502}
\int_\Omega \biggl(\nabla u_{n} \nabla \phi_\Omega - \beta_{n}u_{n} \phi_\Omega + u_n^{p} \phi_\Omega \biggr) = 0.    
\end{align}
Substitute $u_n = s_n \phi_\Omega + v_n$ into \eqref{unphOm1502}, and we deduce
\begin{align*}
0&=\int_\Omega \biggl(\nabla (s_{n}\phi_{\Omega}+v_{n}) \nabla \phi_\Omega 
- (s_{n}\phi_{\Omega}+v_{n}) \phi_\Omega + (1- \beta_{n})(s_{n}\phi_{\Omega}+v_{n})  \phi_\Omega + (s_{n}\phi_{\Omega}+v_{n})^{p} \phi_\Omega \biggr) \\
&=\int_{\Omega}\biggl( \nabla v_{n}\nabla \phi_{\Omega} - v_{n}\phi_{\Omega}  + (1- \beta_{n})(s_{n}\phi_{\Omega}+v_{n})  \phi_\Omega + (s_{n}\phi_{\Omega}+v_{n})^{p} \phi_{\Omega} 
\biggr). 
\end{align*}
Then, \eqref{vn1524} deduces 
\begin{align} 
\int_{\partial\Omega} \left(-\frac{\partial \phi_\Omega}{\partial \nu} \right) \frac{v_n}{s_n^p} 
&= \frac{1-\beta_{n}}{s_{n}^{p-1}} \int_{\Omega}\left( \phi_{\Omega} + \frac{v_{n}}{s_{n}} \right) 
+ \int_\Omega \left( \phi_\Omega + \frac{v_n}{s_n} \right)^{p} \phi_\Omega
\nonumber \\
&\geq \int_\Omega \left( 
\phi_{\Omega} + \frac{v_{n}}{s_{n}} \right)^{p} \phi_{\Omega}. \label{vnsn1033}
\end{align}
For \eqref{vnsn1033}, the equality holds if $\beta_{n}=1$ for all $n$. 

\vspace{6pt}

The proof proceeds by dividing it into two cases. 

\vspace{3pt}

(i) For either case (a) or (b), we use \eqref{vnsn1145} to deduce by the use of Brezis--Lieb's lemma \cite[p.\ 123]{Br11} that
\begin{align} \label{BL}
\int_{\Omega} \left( 
\phi_\Omega + \frac{v_n}{s_n} \right)^{p+1} \longrightarrow \int_{\Omega} \phi_{\Omega}^{p+1}, 
\end{align}
from which we infer that 
\begin{align*}
\int_\Omega \left( 
\phi_\Omega + \frac{v_n}{s_n} \right)^{p} \phi_\Omega 
\longrightarrow \int_\Omega \phi_\Omega^{p+1} > 0. 
\end{align*}
Hence, considering \eqref{bdry1513}, we may derive from \eqref{vnsn1033} that 
\begin{align*}
c s_n^p \leq \int_{\partial\Omega} v_n
\end{align*}
for some $c>0$. 
Here and in what follows, $c, \tilde{c}$, and so on represent generic positive constants that may change step by step. By H\"older's inequality, 
\begin{align} \label{c41554}
c s_n^{p} \leq |\partial \Omega|^{\frac{q}{q+1}}\left( \int_{\partial\Omega}v_n^{q+1} \right)^{\frac{1}{q+1}}. 
\end{align}
Using H\"older's inequality for \eqref{In1034}, 
\begin{align*}
I_n \geq \frac{\lambda_n}{2} \int_{\partial\Omega}v_n^{q+1} - c\, s_n \left(\int_{\partial\Omega} v_n^{q+1}\right)^{\frac{1}{q+1}} 
=\left\{ \frac{\lambda_n}{2}\left( \int_{\partial\Omega} v_n^{q+1} \right)^{\frac{q}{q+1}} - c\, s_n \right\}
\left( \int_{\partial\Omega} v_n^{q+1} \right)^{\frac{1}{q+1}}.  
\end{align*}
By considering \eqref{c41554}, it follows that
\begin{align*}
I_n \geq \biggl( \tilde{c}\, \lambda_n \, s_n^{pq} - c\, s_n \biggr) \left( \int_{\partial\Omega} v_n^{q+1} \right)^{\frac{1}{q+1}} 
= s_n^{pq}\biggl( \tilde{c} \lambda_n - c\, s_n^{1-pq} \biggr) \left( \int_{\partial\Omega} v_n^{q+1} \right)^{\frac{1}{q+1}}. 
\end{align*}
Since $s_n \to 0$ from \eqref{sn1145}, assertion \eqref{In1338} follows. 

\vspace{3pt}

(ii) For case (c), in terms of $\beta_{n}=1$, the combination of \eqref{In1034} with \eqref{vnsn1033} provides  
\begin{align} \label{In1057}
I_n = s_n^{p+1} \biggl\{ \frac{\lambda_n}{2}\int_{\partial\Omega} \frac{v_n^{q+1}}{s_n^{p+1}} - 2 \int_\Omega \biggl( \phi_\Omega + \frac{v_n}{s_n}\biggr)^p \phi_\Omega \biggr\}. 
\end{align}
Plugging $\varphi = 1$ into \eqref{121404} with $\beta_{n}=1$, 
\begin{align*}
-\int_\Omega u_n + \int_\Omega u_n^p + \lambda_n \int_{\partial\Omega}u_n^q = 0.      
\end{align*}
Substituting $u_n = s_n\phi_\Omega + v_n$, 
\begin{align*} 
-\int_\Omega \biggl(\phi_\Omega + \frac{v_n}{s_n} \biggr) + s_n^{p-1}\int_\Omega \biggl( \phi_\Omega + \frac{v_n}{s_n}\biggr)^p + \frac{\lambda_{n}}{s_{n}}\int_{\partial\Omega} v_{n}^{q}=0.  
\end{align*}
Taking \eqref{vnsn1145} and \eqref{BL} into account, this implies 
\begin{align*}
\frac{\lambda_{n}}{s_{n}}\int_{\partial\Omega} v_{n}^{q}\longrightarrow \int_{\Omega} \phi_{\Omega} >0. 
\end{align*}
Thus, we may deduce 
\begin{align*}
c\frac{s_n}{\lambda_n} \leq \int_{\partial\Omega} v_n^{q}.  
\end{align*}
Using H\"older's inequality, 
\begin{align} \label{vnq+1058}
c \left( \frac{s_n}{\lambda_n} \right)^{\frac{q+1}{q}} \leq \int_{\partial\Omega}v_n^{q+1}. 
\end{align}
Combining \eqref{In1057} with \eqref{vnq+1058}, 
\begin{align*}
I_n\geq s_n^{p+1} \biggl\{ c\,  s_n^{\frac{1}{q}-p}\lambda_n^{-\frac{1}{q}}
-2\int_\Omega \biggl( \phi_\Omega + \frac{v_n}{s_n}\biggr)^p \phi_\Omega \biggr\}. 
\end{align*}
Since $pq>1$, $s_n \to 0$, and $\lambda_n$ is bounded from above, we observe that 
$s_n^{\, \frac{1}{q}-p} \rightarrow \infty$, $\lambda_n^{-\frac{1}{q}} \geq \tilde{c}$, and 
$\int_\Omega \left( \phi_\Omega + \frac{v_n}{s_n}\right)^p \phi_\Omega \rightarrow \int_\Omega \phi_\Omega^{p+1} > 0$. 
Thus, assertion \eqref{In1338} follows. 
\end{proof}

%----------------------------------
\section{A priori upper bounds of $\lambda$ for positive solutions} 

\label{sec:bounds}

In this section, we establish {\it a priori} upper bounds of $\lambda$ 
for a positive solution $(\lambda, u)$ of \eqref{p-ab} with $\lambda>0$.

\begin{prop} \label{153319}
Assume that $\beta_\Omega=1$. Then, the following {\rm two} assertions hold. 
\begin{enumerate} \setlength{\itemsep}{3pt} 

\item Given $0<\beta<1$, there exists $\Lambda_{\alpha, \beta}>0$ such that $\lambda \leq \Lambda_{\alpha, \beta}$ if problem \eqref{p-ab} has a positive solution for $\lambda > 0$. Moreover, $\Lambda_{\alpha, \beta}$ is determined uniformly in $\alpha \in [0, \alpha_{0}]$ for $\alpha_{0}>0$ (say $\Lambda_{\beta}$). 

\item Let $\alpha=0$. Assume additionally that $pq\leq1$. Then, the same conclusion remains valid. Moreover, $\Lambda_{0,\beta}$ is determined uniformly in $\beta \in [\beta_0, 1]$ for $0<\beta_0<1$ (say $\Lambda_{0}$). 
\end{enumerate}
\end{prop}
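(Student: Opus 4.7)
For assertion (i), I would test the equation against the principal Robin--Steklov eigenfunction of the linearized boundary problem at $u=0$. Concretely, for each $\beta \in (0,1)=(0,\beta_\Omega)$ let $\mu_1(\beta)>0$ denote the smallest eigenvalue of
\[
-\Delta \psi = \beta \psi \ \mbox{ in } \Omega, \quad \frac{\partial \psi}{\partial \nu} = -\mu_1(\beta) \psi \ \mbox{ on } \partial \Omega,
\]
with principal eigenfunction $\psi \in C^{2+\theta}(\overline{\Omega})$ satisfying $\psi>0$ on $\overline{\Omega}$ (this matches \eqref{epromu0} under the substitution $\mu_1(\beta)=\lambda_{\alpha,\beta}\alpha^{q-1}$, and $\psi$ itself is independent of $\alpha$). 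Plugging $\psi$ into the weak formulation of \eqref{p-ab} and using Green's identity to move the Laplacian onto $\psi$, the $\beta$-terms cancel, yielding
\[
\mu_1(\beta) \int_{\partial\Omega} u\psi = \int_{\Omega} u^{p}\psi + \lambda \int_{\partial\Omega}(u+\alpha)^{q-1} u \psi.
\]
Proposition \ref{lem:albet:bddnorm} gives $u<1$ on $\overline{\Omega}$, so $u+\alpha < 1+\alpha_0$ and $(u+\alpha)^{q-1}u \geq (1+\alpha_0)^{q-1}u$ on $\partial\Omega$ for all $\alpha \in [0,\alpha_0]$ (the inequality persists at $\alpha=0$ since $q-1<0$). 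Dropping the positive $u^p$-term, I obtain $\lambda < \mu_1(\beta)(1+\alpha_0)^{1-q} =: \Lambda_\beta$, provided $\int_{\partial\Omega} u\psi > 0$; this last positivity follows from Hopf's boundary point lemma, since $u\equiv 0$ on $\partial\Omega$ would force $\partial_\nu u \equiv 0$, incompatible with $u>0$ in $\Omega$.

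For assertion (ii), the bound $\Lambda_\beta$ from (i) degenerates as $\beta \to 1^{-}$ (one has $\mu_1(\beta)\to 0$), so I would argue by contradiction using the energy decomposition from Section \ref{sec:ener}. Suppose $(\lambda_n,\beta_n,u_n)$ are positive solutions of \eqref{p-al0} with $\lambda_n\to\infty$ and $\beta_n \in [\beta_0,1]$. Proposition \ref{prop:asympt} yields $u_n \to 0$ in $H^1(\Omega)$, and Proposition \ref{prop:unprofi} then gives $u_n/\|u_n\| \to \phi_\Omega$ in $H^1(\Omega)$. Write $u_n = s_n \phi_\Omega + v_n \in \langle \phi_\Omega\rangle \oplus V$; relations \eqref{sn1145}--\eqref{vnsn1145} hold, and $v_n \neq 0$ for all large $n$ (otherwise $u_n|_{\partial\Omega}\equiv 0$, contradicting Hopf as above). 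Lemma \ref{prop:E} applies in case (a) when $pq<1$, and in case (b) when $pq=1$ (using $\lambda_n\to\infty$), yielding
\[
E(v_n) + c \int_{\partial\Omega} v_n^{q+1} \le 0 \quad \mbox{for large } n.
\]

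The final step is a rescaling and weak compactness argument. Set $w_n := v_n/\|v_n\| \in V$, so $\|w_n\|=1$. Dividing the above inequality by $\|v_n\|^{q+1}$ gives $\|v_n\|^{1-q}E(w_n) + c\int_{\partial\Omega} w_n^{q+1} \le 0$; since $E(w_n)\le 0$ with $|E(w_n)|$ bounded and $\|v_n\|^{1-q}\to 0$ (because $q<1$ and $\|v_n\| \to 0$), I deduce $\int_{\partial\Omega}w_n^{q+1}\to 0$. Extracting $w_n\rightharpoonup w_\infty$ in $H^1(\Omega)$ (with strong $L^2(\Omega)$ and $L^2(\partial\Omega)$ convergence), the limit lies in $V$ (closed subspace) and in $H^1_0(\Omega)$ (from $\int_{\partial\Omega}w_\infty^{q+1}=0$). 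Weak lower semi--continuity gives $E(w_\infty)\le 0$, while \eqref{lamOcha} combined with $\beta_\Omega=1$ gives $E(w_\infty)\ge 0$ on $H^1_0(\Omega)$; hence $E(w_\infty)=0$ forces $w_\infty \in \langle \phi_\Omega\rangle \cap V = \{0\}$. Finally, Lemma \ref{lem1719} applied to $\{w_n\}$ yields $\|w_n\|\to 0$, contradicting $\|w_n\|\equiv 1$. The main obstacle is precisely this last chain: the direct eigenfunction test from (i) collapses at resonance $\beta=1$ and must be replaced by the decomposition along $\phi_\Omega$ married with the delicate energy inequality of Lemma \ref{prop:E}; this is also exactly where the hypothesis $pq\le 1$ is needed.
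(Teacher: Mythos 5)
Your argument for assertion (ii) is essentially the paper's own proof: contradiction via Propositions \ref{prop:asympt} and \ref{prop:unprofi}, the decomposition \eqref{vn1557}, Lemma \ref{prop:E}(a),(b), and a final rescaling of $v_n$ (the paper normalizes $\psi_n=v_n/\|v_n\|$ and derives $\psi_n\to\phi_\Omega\in V$, contradicting $\langle\phi_\Omega\rangle\cap V=\{0\}$; your variant concludes $w_\infty=0$ and then invokes Lemma \ref{lem1719} against $\|w_n\|=1$ — the same mechanism). For assertion (i), however, you take a genuinely different and more quantitative route. The paper runs a two--step compactness/energy contradiction (first showing $\|u_n\|\to0$ for a putative sequence with $\lambda_n\to\infty$, then repeating the argument for $u_n/\|u_n\|$), which yields existence of $\Lambda_\beta$ but no formula. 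You instead test against the principal eigenfunction $\varphi_\beta$ of \eqref{epromu} and obtain the explicit bound $\lambda\le\lambda_\beta(1+\alpha_0)^{1-q}$, uniform in $\alpha\in[0,\alpha_0]$; this is correct and arguably cleaner. Two small repairs: first, $\lambda_\beta$ is the \emph{largest} eigenvalue of \eqref{epromu} (as the paper states), not the smallest, and one checks $\lambda_\beta\ge\beta|\Omega|/|\partial\Omega|$ with $\lambda_\beta\to\infty$ (not $0$) as $\beta\to1^{-}$ — either way your point stands that the explicit bound degenerates at resonance and (ii) requires the separate energy argument, all the more so since \eqref{epromu} has no principal eigenvalue at $\beta=1$. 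Second, the Hopf detour to justify $\int_{\partial\Omega}u\varphi_\beta>0$ is unnecessary (and slightly delicate given the paper's caveat about Hopf's lemma failing for the non--Lipschitz boundary term): your own identity already gives $\lambda_\beta\int_{\partial\Omega}u\varphi_\beta=\int_\Omega u^p\varphi_\beta+\lambda\int_{\partial\Omega}(u+\alpha)^{q-1}u\varphi_\beta\ge\int_\Omega u^p\varphi_\beta>0$, so the trace positivity comes for free (alternatively it is recorded in Theorem \ref{t0}(I)). Neither point is a gap.
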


\begin{proof}
(i) Without loss of generality, we assume $\alpha_{0}=1$. 
We argue by contradiction. Assume that problem \eqref{p-ab} has a positive solution $(\lambda_n, \alpha_n, u_n)$ with 
$\lambda_n \to \infty$ and $\alpha_{n} \in [0,1]$, and then, 
\begin{align} \label{1638}
    \int_\Omega \biggl( |\nabla u_n|^2 - \beta u_n^2 + u_n^{p+1} \biggr) + \lambda_n \int_{\partial\Omega} (u_n + \alpha_n)^{q-1}u_n^2 = 0. 
\end{align}
From \eqref{1638}, 
Proposition \ref{lem:albet:bddnorm} ensures that $u_n$ is bounded in $H^1(\Omega)$ because 
\begin{align*}
    \int_\Omega |\nabla u_n|^2 \leq \beta \int_\Omega u_n^2 \leq \beta  |\Omega|. 
\end{align*}
Hence, up to a subsequence, $u_n \rightharpoonup u_\infty\geq0$, and $u_n \to u_\infty$ in $L^{2}(\Omega)$ and $L^2(\partial\Omega)$ 
for some $u_\infty \in H^1(\Omega)$. 
%, and $u_n \to u_\infty$ a.e. 
Since $u_n \leq 1$ in $\overline{\Omega}$ and $0\leq \alpha_n \leq 1$, \eqref{1638} provides 
\begin{align*} 
\lambda_n 2^{q-1} \int_{\partial\Omega} u_n^2 \leq \beta |\Omega|. 
\end{align*}
Passing to the limit, $\int_{\partial\Omega} u_n^2 \rightarrow 0$, which implies that $\int_{\partial\Omega}u_\infty^2 = 0$, and $u_\infty \in H^1_0(\Omega)$. 

Then, from \eqref{1638}, $E_\beta(u_n)\leq 0$. By the weak lower semi--continuity, 
\begin{align*}
    E_\beta(u_\infty) \leq \varliminf_n E_\beta (u_n) \leq \varlimsup_n E_\beta (u_n) \leq0. 
\end{align*}
We see that $0\leq E(u_\infty)\leq E_\beta (u_\infty)$ because $\beta_\Omega=1$ and $u_\infty \in H^{1}_{0}(\Omega)$; thus, $E_{\beta}(u_{\infty})=E(u_{\infty})=0$, i.e., 
$u_{\infty}=0$. Moreover, we observe $E_\beta (u_n) \rightarrow 0$; thus, $\| u_n \| \rightarrow 0$. 
Then, a similar argument is repeated for $w_n = \frac{u_n}{\| u_n\|}$. 
Note that $\| w_n\|=1$ and $E_\beta (w_n)\leq 0$. Up to a subsequence, $w_n \rightharpoonup w_\infty\geq0$, and $w_n \to w_\infty$ in $L^2(\Omega)$ and $L^2(\partial\Omega)$ for some $w_\infty \in H^1(\Omega)$. 
Then, \eqref{1638} provides 
\begin{align*}
    \lambda_n 2^{q-1} \int_{\partial\Omega} w_n^2 \leq \beta \int_\Omega w_n^2 \leq \beta. 
\end{align*}
Taking the limit, $\int_{\partial\Omega}w_n^2 \rightarrow 0$; thus, $\int_{\partial\Omega} w_\infty^2=0$, and $w_\infty \in H^1_0(\Omega)$. By using  the weak lower semi--continuity and the condition $\beta_\Omega=1$, 
\begin{align*}
    0\leq E(w_\infty)\leq E_\beta(w_\infty)\leq \varliminf_n E_\beta (w_n)\leq \varlimsup_n E_\beta (w_n) \leq 0; 
\end{align*}
thus, $\| w_n \| \rightarrow 0$, which is contradictory for the assertion $\| w_n \|=1$. Assertion (i) has been verified. 

\vspace{3pt}

(ii) By contradiction we may assume that there exists a positive solution $(\lambda_n, \beta_n, u_n)$ of \eqref{p-ab} with $\alpha=0$ such that $\lambda_n \to \infty$ and $\beta_{n}\in [\beta_{0},1]$. Then, Proposition \ref{prop:asympt} shows that $\| u_n\|\rightarrow 0$, 
and moreover, Proposition \ref{prop:unprofi} shows that $w_n = \frac{u_n}{\| u_n\|}\rightarrow \phi_\Omega$ in $H^1(\Omega)$. We employ the orthogonal decomposition $u_n = s_n\phi_\Omega + v_n \in \langle \phi_\Omega \rangle + V$ for $u_n$. 
Since $\| u_n\| \rightarrow 0$, so does $\| v_n\|$ from \eqref{vn1145}. For $\psi_n = \frac{v_n}{\| v_n\|}\in V$, $\| \psi_n\|=1$; then, up to a subsequence, $\psi_n \rightharpoonup \psi_\infty$ and $\psi_n \rightarrow \psi_\infty$ in $L^2(\Omega)$ and $L^2(\partial\Omega)$ for some $\psi_\infty \in H^1(\Omega)$. Note that $\psi_\infty\geq0$ on $\partial\Omega$ because $v_n\geq0$ on $\partial\Omega$. 

Then, by applying Lemma \ref{prop:E}(a), (b), 
\eqref{Evn1911} deduces 
\begin{align*}
    c\int_{\partial\Omega} \psi_n^{q+1} \leq -E(\psi_n) \| v_n\|^{1-q} \longrightarrow 0,   
\end{align*}
which implies that $\int_{\partial\Omega} \psi_n^{q+1} \rightarrow 0$;  thus, 
$\int_{\partial\Omega} \psi_\infty^{q+1}=0$, and $\psi_\infty \in H^1_0(\Omega)$. 
Since $E(\psi_n)\leq 0$ from \eqref{Evn1911}, the similar argument as 
in the second paragraph of the proof of Proposition \ref{prop:unprofi} yields that $\psi_n \rightarrow \phi_{\Omega}$ in $H^1(\Omega)$. We deduce as well that $\phi_{\Omega} \in V$ since $V$ is closed. This is contradictory to $\langle \phi_{\Omega} \rangle \cap V=\{ 0\}$. 
\end{proof}

%---------------------------------
\section{Limiting behavior of positive solutions} 

\label{sec:asymp}

In this section, we investigate the limiting behaviors of a positive solution $(\lambda, u)$ of \eqref{p-ab} as $\lambda \to 0^{+}$ and as $\| u\|_{C(\overline{\Omega})} \to 0$, respectively. 

\subsection{Limiting behavior as $\lambda \to 0^{+}$} 
Let $\alpha \geq 0$ and $0<\beta \leq 1$ be fixed, and let $(\lambda_n, u_n)$ be a positive solution of \eqref{p-ab} such that $\lambda_n \to 0^{+}$. 
On the basis of Proposition \ref{lem:albet:bddnorm}, we employ the bootstrap argument attributed to elliptic regularity \cite[Theorem 2.2]{Ro2005} and a compactness argument to deduce that, up to a subsequence, $u_{n} \rightarrow u_{\infty}$ in $C(\overline{\Omega})$ for some $u_{\infty}\geq0$. 

Then, we prove the following. 

\begin{prop} \label{215925}
If $(\lambda_n, u_n)$ is a positive solution of \eqref{p-ab} such that $\lambda_n \to 0^{+}$, then, up to a subsequence, either $u_n \rightarrow \beta^{\frac{1}{p-1}}$ or $u_n \rightarrow 0$ in $C(\overline{\Omega})$. 
\end{prop}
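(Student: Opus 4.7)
The plan is to identify the limit $u_\infty$ as a classical nonnegative solution of the Neumann problem $-\Delta u = \beta u - u^p$ in $\Omega$ with $\frac{\partial u}{\partial \nu} = 0$ on $\partial\Omega$, and then to classify such solutions as either $0$ or the constant $\beta^{\frac{1}{p-1}}$. The subsequential convergence $u_n \to u_\infty$ in $C(\overline{\Omega})$ is already provided in the paragraph before the statement, so the task reduces to passing to the limit in the weak formulation and classifying $u_\infty$.

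To pass to the limit, Proposition \ref{lem:albet:bddnorm} gives $u_n < 1$ in $\overline{\Omega}$, so testing the weak form of \eqref{p-ab} against $u_n$ itself yields $\int_\Omega |\nabla u_n|^2 \leq \beta|\Omega|$, so $\{u_n\}$ is bounded in $H^1(\Omega)$ and (up to a subsequence) weakly convergent to $u_\infty$. The interior nonlinear terms pass by uniform convergence and boundedness, while the boundary term $\lambda_n \int_{\partial\Omega}(u_n + \alpha)^{q-1} u_n \varphi$ vanishes because $\lambda_n \to 0^{+}$ and the integrand is uniformly controlled in $L^1(\partial\Omega)$ (using $u_n^q \leq 1$ when $\alpha = 0$, and $(u_n+\alpha)^{q-1} \leq \alpha^{q-1}$ when $\alpha > 0$). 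Elliptic regularity then upgrades $u_\infty$ to a classical solution of the Neumann problem. Next, I rewrite the limiting equation as $-\Delta u_\infty + u_\infty^{p-1} \cdot u_\infty = \beta u_\infty \geq 0$ with bounded nonnegative coefficient $u_\infty^{p-1}$, and apply the strong maximum principle together with Hopf's boundary point lemma to conclude that either $u_\infty \equiv 0$ (done) or $u_\infty > 0$ in $\overline{\Omega}$; a boundary zero is ruled out because the Neumann condition prohibits $\frac{\partial u_\infty}{\partial \nu} < 0$.

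In the strictly positive case, I would test the limiting equation against $u_\infty - \beta^{\frac{1}{p-1}} \in H^1(\Omega)$. The Neumann BC eliminates the boundary contribution, leaving
\begin{align*}
\int_\Omega |\nabla u_\infty|^2 = -\int_\Omega u_\infty \bigl(u_\infty^{p-1} - \beta\bigr)\bigl(u_\infty - \beta^{\frac{1}{p-1}}\bigr) \leq 0,
\end{align*}
where the pointwise nonpositivity of the integrand follows from the strict monotonicity of $t \mapsto t^{p-1}$ on $[0,\infty)$ for $p > 1$. Hence $u_\infty$ is constant, and a positive constant solution of $\beta u - u^p = 0$ forces $u_\infty \equiv \beta^{\frac{1}{p-1}}$. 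The main obstacle I anticipate is the dichotomy step: ruling out non-constant positive solutions of the limiting Neumann problem in a general (non-convex) domain; the Picone-flavored test function $u_\infty - \beta^{\frac{1}{p-1}}$ together with the monotonicity of $t^{p-1}$ is the key ingredient that bypasses any geometric hypothesis on $\Omega$.
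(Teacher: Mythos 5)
Your proposal is correct and follows essentially the same route as the paper: pass to the limit in the weak formulation (the boundary term vanishes because $\lambda_n \to 0^{+}$ while the boundary integrand $(u_n+\alpha)^{q-1}u_n \leq 1$ is uniformly bounded) and identify the uniform limit $u_\infty$ as a nonnegative solution of the Neumann problem $-\Delta u = \beta u - u^p$, $\frac{\partial u}{\partial \nu}=0$. The only difference is that the paper simply asserts the final classification $u_\infty \in \{0, \beta^{\frac{1}{p-1}}\}$ as a known fact about the Neumann logistic equation, whereas you prove it explicitly by testing against $u_\infty - \beta^{\frac{1}{p-1}}$; that computation is valid and in fact makes your maximum-principle dichotomy step redundant, since it already forces $u_\infty$ to be a constant root of $\beta t - t^p$.
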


\begin{proof}
We assume that $\|u_n \|_{C(\overline{\Omega})}\geq \delta$ for some $\delta>0$, and then, $u_{\infty}\neq 0$. By definition, 
\begin{align*}
    \int_\Omega \biggl( \nabla u_n \nabla \varphi - \beta u_n \varphi + u_n^p \varphi \biggr) + \lambda_n \int_{\partial \Omega} (u_n + \alpha)^{q-1}u_n \varphi = 0, \quad \varphi \in H^1(\Omega). 
\end{align*}
Substitute $\varphi = u_{n}$, and 
Proposition \ref{lem:albet:bddnorm} ensures
\begin{align*}
    \int_\Omega |\nabla u_n|^2 \leq \beta \int_\Omega u_n^2 \leq |\Omega|,  %\label{1552a}\\
\end{align*}
i.e., $u_n$ is bounded in $H^1(\Omega)$; therefore, up to a subsequence, 
$u_n \rightharpoonup u\in H^{1}(\Omega)$, $u_n \rightarrow u$ in $L^2(\Omega)$ and $L^2(\partial\Omega)$, and $u_n \rightarrow u$ a.e.\ in $\Omega$ for some $u\in H^{1}(\Omega)$; thus, $u=u_{\infty}\in H^{1}(\Omega)$. 
Since $u_{n}\leq 1$, we obtain that for $\alpha >0$, 
\begin{align*}
(u_n + \alpha)^{q-1}u_n = \biggl( \frac{u_n}{u_n + \alpha} \biggr)^{1-q} u_n^q \leq 1, 
\end{align*}
so that $\lambda_n \int_{\partial \Omega} (u_n + \alpha)^{q-1}u_n \varphi \rightarrow 0$. Then, passing to the limit, 
\begin{align} \label{220925}
    \int_{\Omega} \biggl( \nabla u_{\infty} \nabla \varphi - \beta u_{\infty} \varphi + u_{\infty}^{p} \varphi \biggr)=0, 
\end{align}
where we used the Lebesgue dominated convergence theorem to infer that $\int_\Omega u_n^p \varphi \rightarrow \int_{\Omega} u_{\infty}^{p} \varphi$. We find from \eqref{220925} that 
$u_{\infty}\neq 0$ is a nonnegative weak solution of the Neumann  problem 
\begin{align*}  
\begin{cases}
-\Delta u = \beta u- u^p & \mbox{ in } \Omega, \\
\frac{\partial u}{\partial \nu}  = 0 & \mbox{ on } \partial\Omega, 
\end{cases} 
\end{align*}
and thus, $u_{\infty}=\beta^{\frac{1}{p-1}}$ as desired. 
\end{proof}

%----------------------------------------------
\subsection{Limiting behavior as $\| u \|_{C(\overline{\Omega})}\to 0$} 
We consider the limiting behavior of a positive solution $(\lambda, u)$ of \eqref{p-ab} with $\| u\|_{C(\overline{\Omega})} \rightarrow 0$ in the case  
when $\alpha = 0$ and $0<\beta\leq1$, that is, we consider a positive solution $(\lambda, u)$ of \eqref{p-al0} with $\| u \|_{C(\overline{\Omega})} \rightarrow 0$. We argue the case $\alpha > 0$ in the next section using the general theory for local bifurcation from zero.

For $0<\beta < 1$, we have the following. 
\begin{prop} \label{222825} 
Assume that $\beta_\Omega = 1$. Let $0<\beta<1$. If $(\lambda_n,u_n)$ is a positive solution of \eqref{p-al0} such that $\lambda_n> 0$ is bounded from above and $\| u_n \|_{C(\overline{\Omega})}\rightarrow 0$, then $\lambda_n \to 0$. 
\end{prop}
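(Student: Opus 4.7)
The plan is to argue by contradiction: suppose $\lambda_n\not\to 0$, so that along a subsequence $\lambda_n\geq\underline{\lambda}$ for some $\underline{\lambda}>0$ (while the assumed upper bound on $\lambda_n$ keeps us in a nondegenerate range). The central idea is that the Dirichlet eigenfunction $\phi_\Omega$, tested against \eqref{p-al0}, both kills the nonlinear boundary term (because $\phi_\Omega|_{\partial\Omega}=0$) and simultaneously exposes the spectral gap $1-\beta>0$ between the interior coefficient $\beta$ and $\beta_\Omega=1$.

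First I would upgrade $\|u_n\|_{C(\overline{\Omega})}\to 0$ to $\|u_n\|\to 0$ in $H^1(\Omega)$. Testing the weak form with $\varphi=u_n$ and using Proposition \ref{lem:albet:bddnorm} ($u_n<1$ in $\overline{\Omega}$) gives
\[
\int_\Omega |\nabla u_n|^2\leq \beta\int_\Omega u_n^2 \leq \beta|\Omega|\,\|u_n\|_{C(\overline{\Omega})}^2 \to 0,
\]
and the $L^2$ part of $\|u_n\|$ is handled identically. With $\lambda_n\geq\underline{\lambda}$, the constant sequence $\beta_n\equiv\beta\in(0,1)$, and $\|u_n\|\to 0$, Proposition \ref{prop:unprofi} applies and yields $w_n:=u_n/\|u_n\|\to\phi_\Omega$ in $H^1(\Omega)$.

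Then I would test the weak form with $\varphi=\phi_\Omega$. Since $\phi_\Omega|_{\partial\Omega}=0$ the boundary integral $\lambda_n\int_{\partial\Omega}u_n^q\phi_\Omega$ vanishes, and Green's identity combined with $-\Delta\phi_\Omega=\phi_\Omega$ rewrites $\int_\Omega \nabla u_n\nabla\phi_\Omega=\int_\Omega u_n\phi_\Omega+\int_{\partial\Omega}u_n\,\partial\phi_\Omega/\partial\nu$, producing the identity
\[
(1-\beta)\int_\Omega u_n\phi_\Omega + \int_\Omega u_n^p\phi_\Omega = \int_{\partial\Omega}u_n\Bigl(-\frac{\partial\phi_\Omega}{\partial\nu}\Bigr).
\]
Dividing by $\|u_n\|$ and sending $n\to\infty$, the $L^2(\Omega)$-convergence $w_n\to\phi_\Omega$ makes the left-hand side tend to $(1-\beta)\int_\Omega\phi_\Omega^2>0$ (the superlinear piece is $O(\|u_n\|^{p-1})\to 0$), whereas trace continuity forces $w_n\to\phi_\Omega|_{\partial\Omega}=0$ in $L^2(\partial\Omega)$ and, combined with the bound \eqref{bdry1513} on $-\partial\phi_\Omega/\partial\nu$, drives the right-hand side to $0$. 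This contradicts $1-\beta>0$, completing the argument.

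The only conceptual obstacle is to recognize that $\phi_\Omega$ is the one test function that simultaneously annihilates the $\lambda$-dependent boundary piece and activates the mismatch $\beta<\beta_\Omega$; once the profile $w_n\to\phi_\Omega$ is in hand via Proposition \ref{prop:unprofi}, the rest is a direct limit computation driven by the gap $1-\beta$.
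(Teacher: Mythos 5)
Your proof is correct, but the final contradiction is reached by a different mechanism than in the paper. The paper also first upgrades to $\|u_n\|\to 0$ and normalizes $w_n=u_n/\|u_n\|$, but it then stays at the level of weak convergence: it shows $w_\infty\in H^1_0(\Omega)$ from the boundary integral, and uses weak lower semicontinuity together with the chain $0\leq E(w_\infty)\leq E_\beta(w_\infty)\leq\varliminf E_\beta(w_n)\leq 0$ (as in the last paragraph of the proof of Proposition \ref{153319}(i)); since $\beta<1$, the identity $E_\beta(w_\infty)-E(w_\infty)=(1-\beta)\int_\Omega w_\infty^2$ forces $w_\infty=0$ and then $\|w_n\|\to 0$, contradicting $\|w_n\|=1$. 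You instead invoke Proposition \ref{prop:unprofi} to get the strong profile $w_n\to\phi_\Omega$ and then test with $\varphi=\phi_\Omega$, so that Green's identity isolates the gap term $(1-\beta)\int_\Omega u_n\phi_\Omega$ against a boundary term that vanishes by trace continuity; this is essentially the computation \eqref{vn1524}--\eqref{vnsn1033} of Lemma \ref{prop:E}, where the $(1-\beta_n)$ contribution is merely discarded as nonnegative, whereas you exploit it as the source of the contradiction. Both arguments are sound; the paper's is more economical (no strong convergence of $w_n$ is needed), while yours makes the role of the spectral gap $1-\beta$ quantitatively explicit. One stylistic remark: applying Proposition \ref{prop:unprofi} with $\beta_n\equiv\beta<1$ is logically legitimate inside your proof by contradiction, but note that its hypotheses are in fact unsatisfiable for fixed $\beta<1$ --- which is exactly the content of the statement being proved --- so the heavy lifting is already hidden in that citation.
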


\begin{proof}
Assume by contradiction that $\lambda_n \rightarrow \overline{\lambda}$ for some $\overline{\lambda}>0$ and 
$\| u_n\|_{C(\overline{\Omega})} \rightarrow 0$ for a positive solution $(\lambda_n,u_n)$ of \eqref{p-al0}. By definition, 
\begin{align} \label{unbeta}
\int_{\Omega} \biggl( |\nabla u_n|^2 - \beta u_n^2 + u_n^{p+1} \biggr) + \lambda_n \int_{\partial\Omega} u_n^{q+1}=0, 
\end{align}
and then, $\| u_{n} \| \rightarrow 0$. For $w_n=\frac{u_n}{\| u_n\|}$, $\| w_n\|=1$; then, up to a subsequence, $w_n \rightharpoonup w_\infty\geq0$, and $w_n \rightarrow w_\infty$ in $L^2(\Omega)$ and $L^2(\partial\Omega)$ for some $w_\infty \in H^1(\Omega)$. Then, \eqref{unbeta} deduces 
\begin{align*}
   \int_{\partial\Omega}w_n^{q+1}\leq \frac{\beta}{\lambda_n}\left( \int_\Omega w_n^2 \right) \| u_n\|^{1-q} \longrightarrow 0, 
\end{align*}
which implies that $\int_{\partial\Omega} w_\infty^{q+1} = 0$, and thus, $w_\infty \in H^1_0(\Omega)$. The rest of the proof proceeds similarly as in the last paragraph of the proof of Proposition \ref{153319}(i), and we arrive at a contradiction. 
\end{proof}

For $\beta=1$, that is, for \eqref{p}, we prove Proposition \ref{prop827} below. To this end, we prove the following {\it three} preparatory lemmas for $U = \lambda^{-\frac{1}{1-q}}u$ with a positive solution $(\lambda, u)$ of \eqref{p} for $\lambda > 0$. 
Note that $U=\lambda^{-\frac{1}{1-q}}u$ is a positive solution of the problem
\begin{align*}
\begin{cases}
-\Delta U = U - \lambda^{\frac{p-1}{1-q}} U^{p} & \mbox{ in } \Omega, \\
\frac{\partial U}{\partial \nu}  = -U^q & \mbox{ on } \partial\Omega. 
\end{cases} 
\end{align*}

%-------------------------
\begin{lem} \label{lem1809}
There exists $C>0$ such that $\| U_n\|\leq C$ for $U_n = \lambda_n^{-\frac{1}{1-q}}u_n$ with a positive solution 
$(\lambda_n,u_n)$ of \eqref{p} satisfying that $\lambda_{n}\to 0^{+}$ and $\| u_{n} \| \rightarrow 0$. 
\end{lem}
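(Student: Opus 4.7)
The plan is to argue by contradiction and reduce the question to a limiting Neumann eigenvalue problem that cannot admit a nontrivial nonnegative solution. First I note that $U_n = \lambda_n^{-1/(1-q)} u_n$ satisfies the rescaled weak formulation
\[
\int_\Omega \nabla U_n \nabla \varphi - \int_\Omega U_n \varphi + \lambda_n^{(p-1)/(1-q)} \int_\Omega U_n^p \varphi + \int_{\partial\Omega} U_n^q \varphi = 0, \quad \varphi \in H^1(\Omega),
\]
since the scaling is designed so that the boundary nonlinearity becomes parameter-free. Assuming by contradiction that $\| U_n\| \to \infty$ along a subsequence, set $W_n := U_n / \| U_n\|$; testing with $\varphi = W_n$ and dividing by $\| U_n\|^2$ yields
\[
E(W_n) + \lambda_n^{(p-1)/(1-q)} \| U_n\|^{p-1} \int_\Omega W_n^{p+1} + \| U_n\|^{q-1} \int_{\partial\Omega} W_n^{q+1} = 0,
\]
whose last two terms are nonnegative. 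Hence $E(W_n) \leq 0$, which combined with $\| W_n\|=1$ forces $\int_\Omega W_n^2 \geq \tfrac12$.

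Up to extracting a subsequence, $W_n \rightharpoonup W_\infty$ in $H^1(\Omega)$ and converges strongly in $L^2(\Omega)$, $L^{p+1}(\Omega)$ and $L^2(\partial\Omega)$, via Sobolev embedding (using $p<(N+2)/(N-2)$) and the trace theorem; in particular $W_\infty \geq 0$ and $\int_\Omega W_\infty^2 \geq \tfrac12$, so $W_\infty \not\equiv 0$. The crucial observation for passing to the limit is that the two perturbation scalars vanish: on one hand,
\[
\lambda_n^{(p-1)/(1-q)} \| U_n\|^{p-1} = \bigl( \lambda_n^{1/(1-q)} \| U_n\| \bigr)^{p-1} = \| u_n\|^{p-1} \longrightarrow 0
\]
by the hypothesis $\| u_n\| \to 0$, and on the other hand, $\| U_n\|^{q-1} \to 0$ follows from $q<1$ and the blow-up assumption. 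Passing to the limit in the weak formulation, the nonlinear and boundary terms drop out and one gets
\[
\int_\Omega \nabla W_\infty \nabla \varphi - \int_\Omega W_\infty \varphi = 0 \quad \forall \varphi \in H^1(\Omega),
\]
so $W_\infty$ is a weak Neumann eigenfunction associated with the positive eigenvalue $1$.

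To close the argument I test the limit identity with $\varphi \equiv 1$, which yields $\int_\Omega W_\infty = 0$; combined with $W_\infty \geq 0$, this forces $W_\infty \equiv 0$, contradicting $\int_\Omega W_\infty^2 \geq \tfrac12$. The main obstacle I foresee is that, unlike the settings of Sections \ref{sec:ener} and \ref{sec:bounds}, the hypothesis $\lambda_n \to 0^+$ places us outside the reach of the a priori upper bounds of Proposition \ref{153319} and the profile result Proposition \ref{prop:unprofi}; the correct move is to replace the lower bound on $\lambda_n$ with the rescaling $U_n = \lambda_n^{-1/(1-q)} u_n$, which exactly turns the boundary term into the scale-invariant $-U^q$ while preserving nonnegativity, so that the smallness of $\| u_n\|$ alone suppresses the bulk nonlinearity and the assumption $\| U_n\|\to\infty$ suppresses the boundary nonlinearity after renormalization. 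The exponent $q<1$ entering $\| U_n\|^{q-1}$ is decisive here and would fail at $q=1$, consistent with the standing assumption $0<q<1$.
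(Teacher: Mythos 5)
Your proposal is correct and follows essentially the same route as the paper: normalize $W_n = U_n/\|U_n\|$, use $E(W_n)\le 0$ to force a nontrivial weak limit (your explicit bound $\int_\Omega W_n^2\ge \tfrac12$ replaces the paper's appeal to Lemma \ref{lem1719}), observe that $\lambda_n^{(p-1)/(1-q)}\|U_n\|^{p-1}=\|u_n\|^{p-1}\to 0$ and $\|U_n\|^{q-1}\to 0$, and then extract the contradiction $\int_\Omega W_\infty=0$ from the test function $\varphi\equiv 1$ (the paper tests with $\varphi\equiv 1$ before passing to the limit rather than after, which is an immaterial reordering). The only cosmetic slip is that the displayed energy identity comes from testing with $\varphi=U_n$ and dividing by $\|U_n\|^2$, not from testing with $\varphi=W_n$.
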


\begin{proof}
Assume by contradiction that $\| U_n\|\rightarrow \infty$. For $w_n = \frac{U_n}{\| U_n\|}$, $\| w_n\|=1$; up to a subsequence, $w_n \rightharpoonup w_\infty\geq0$, and $w_n \to w_\infty$ in $L^{2}(\Omega)$ and $L^2(\partial\Omega)$ for some $w_\infty \in H^1(\Omega)$.  Since $E(w_n)\leq0$, Lemma \ref{lem1719} provides $w_\infty\neq 0$. 

Recall that $(\lambda_{n},U_{n})$ admits 
\begin{align} \label{Upro1758} 
\int_\Omega \biggl( \nabla U \nabla \varphi - U \varphi + \lambda^{\frac{p-1}{1-q}} U^{p}\varphi \biggr) + \int_{\partial\Omega} U^{q}\varphi = 0, \quad \varphi \in H^1(\Omega). 
\end{align}
Using the test function $\varphi=1$ in \eqref{Upro1758}, 
\begin{align*}
\int_\Omega U_n = \lambda_n^{\frac{p-1}{1-q}}\int_\Omega U_n^p + \int_{\partial\Omega}U_n^q 
= \int_\Omega u_n^{p-1}U_n + \int_{\partial\Omega}U_n^q, 
\end{align*}
which implies
\begin{align} \label{wn1624}
\int_\Omega w_n = \int_\Omega u_n^{p-1}w_n + \int_{\partial\Omega}w_n^q \, \| U_n\|^{q-1}.  \end{align}
We may assume that $u_n \rightarrow 0$ a.e.\ in $\Omega$. 
Since $u_n\leq1$ in $\overline{\Omega}$,  
\[
\int_\Omega u_n^{p-1}w_n = \int_\Omega u_n^{p-1}w_\infty + \int_\Omega u_n^{p-1}(w_n - w_\infty) \longrightarrow 0, 
\]
where we used the Lebesgue dominated convergence theorem to deduce $\int_{\Omega}u_{n}^{p-1}w_{\infty} \rightarrow 0$. Then, taking the limit in \eqref{wn1624} yields $\int_\Omega w_\infty = 0$, and $w_\infty=0$, which is a contradiction.
\end{proof}

\begin{lem} \label{lem1819}
Assume that $\beta_\Omega = 1$. Then, there is no positive solution $U$ of \eqref{Upro1758} for $\lambda=0$.
\end{lem}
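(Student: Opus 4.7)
The plan is to argue by contradiction: assume a positive solution $U$ of \eqref{Upro1758} with $\lambda=0$ exists, and test the weak formulation against the Dirichlet ground state $\phi_\Omega$. Because $\beta_\Omega=1$ is exactly the eigenvalue matching the interior equation $-\Delta U=U$, the test should collapse the bulk contributions and leave only a boundary identity involving $-\partial\phi_\Omega/\partial\nu$, which the sign information in \eqref{bdry1513} should rule out.

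Concretely, I would first substitute $\varphi=\phi_\Omega\in H^1_0(\Omega)\subset H^1(\Omega)$ into \eqref{Upro1758} with $\lambda=0$. Since $\phi_\Omega$ vanishes on $\partial\Omega$, the absorption term $\int_{\partial\Omega}U^q\phi_\Omega$ drops out, yielding
\[
\int_\Omega \nabla U\cdot \nabla \phi_\Omega = \int_\Omega U\phi_\Omega.
\]
Next I would compute the same quantity by Green's identity, using that $\phi_\Omega\in C^{2+\theta}(\overline\Omega)$ satisfies $-\Delta\phi_\Omega=\phi_\Omega$:
\[
\int_\Omega \nabla U\cdot \nabla \phi_\Omega = \int_\Omega U(-\Delta\phi_\Omega) + \int_{\partial\Omega} U\,\frac{\partial\phi_\Omega}{\partial\nu} = \int_\Omega U\phi_\Omega + \int_{\partial\Omega} U\,\frac{\partial\phi_\Omega}{\partial\nu}.
\]
Comparing the two expressions gives the crucial boundary identity
\[
\int_{\partial\Omega} U\left(-\frac{\partial\phi_\Omega}{\partial\nu}\right)=0.
\]

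Since $U\ge 0$ on $\partial\Omega$ and $-\partial\phi_\Omega/\partial\nu\ge \min_{\partial\Omega}(-\partial\phi_\Omega/\partial\nu)>0$ by \eqref{bdry1513}, this forces $U\equiv 0$ on $\partial\Omega$. Then the boundary condition $\partial U/\partial\nu=-U^q$ gives $\partial U/\partial\nu\equiv 0$ on $\partial\Omega$ as well. Consequently $U\in H^1_0(\Omega)$ is a nontrivial nonnegative weak solution of $-\Delta U=U$ in $\Omega$, i.e.\ a positive Dirichlet eigenfunction at the simple eigenvalue $\beta_\Omega=1$, so $U=c\,\phi_\Omega$ for some $c>0$. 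But then $\partial U/\partial\nu=c\,\partial\phi_\Omega/\partial\nu<0$ on $\partial\Omega$ by \eqref{bdry1513}, contradicting $\partial U/\partial\nu\equiv 0$.

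The only delicate point is the legitimacy of the two integration-by-parts steps: the weak-formulation step is immediate as $\phi_\Omega\in H^1(\Omega)$ is an admissible test function, and Green's identity is justified by the smoothness of $\phi_\Omega$ together with $U\in H^1(\Omega)$ (so the boundary trace $U|_{\partial\Omega}$ is a well-defined $H^{1/2}$ function). The conceptual heart of the argument is recognizing that resonance $\beta_\Omega=1$ is precisely what makes the bulk terms cancel, reducing the problem to a boundary identity that is immediately obstructed by the Hopf-type strict sign in \eqref{bdry1513}.
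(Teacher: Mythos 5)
Your argument is correct and is essentially the paper's proof: both test \eqref{Upro1758} with $\lambda=0$ against $\phi_\Omega$, integrate by parts, and exploit the strict sign of $-\frac{\partial\phi_\Omega}{\partial\nu}$ from \eqref{bdry1513}. The paper reaches the contradiction one step sooner by first taking $\varphi=1$ to get $\int_\Omega U=\int_{\partial\Omega}U^q>0$, so that $U>0$ on a boundary set of positive measure and the boundary integral $\int_{\partial\Omega}U\bigl(-\frac{\partial\phi_\Omega}{\partial\nu}\bigr)$ is strictly positive rather than zero; this bypasses your final eigenfunction-identification step, which leans informally on the pointwise Neumann condition for a weak solution and could in any case be replaced by testing with $\varphi=1$ once $U|_{\partial\Omega}=0$ is known.
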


\begin{proof}
If it exists, then from \eqref{Upro1758} with $\lambda=0$ and $\varphi=1$, 
we deduce that $0<\int_{\Omega}U = \int_{\partial\Omega}U^{q}$; thus, 
$U>0$ on some $\Gamma \subset \partial\Omega$ with $|\Gamma|>0$, which implies $\int_{\partial\Omega} \frac{\partial \phi_\Omega}{\partial \nu} U<0$. The test function $\varphi = \phi_\Omega$ in \eqref{Upro1758} with $\lambda = 0$ is considered, and then, 
$\int_\Omega (\nabla U \nabla \phi_\Omega - U\phi_\Omega) = 0$. The divergence theorem leads us to the contradiction
\begin{align*}
\int_\Omega \phi_\Omega U = \int_\Omega -\Delta \phi_\Omega U = \int_\Omega \nabla \phi_\Omega \nabla U - \int_{\partial\Omega} \frac{\partial \phi_\Omega}{\partial \nu} U > \int_\Omega \nabla \phi_\Omega \nabla U.
\end{align*}
\end{proof}

%-------------^^-^-^^^^^----------
\begin{lem} \label{lem1813}
Assume that $\beta_\Omega = 1$ and $pq\geq 1$. Then, 
there exists $C>0$ such that $\| U_n \|\geq C$ for $U_n = \lambda_n^{-\frac{1}{1-q}}u_n$ with a positive solution $(\lambda_n, u_n)$ of \eqref{p} meeting the condition $\lambda_n \to 0^{+}$. 
\end{lem}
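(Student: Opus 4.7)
The plan is to argue by contradiction: suppose $\| U_n \| \to 0$ along a subsequence. Since $u_n = \lambda_n^{1/(1-q)} U_n$ and $\lambda_n \to 0^+$, this forces $\| u_n \| \to 0$, and elliptic bootstrap combined with $u_n < 1$ from Proposition \ref{lem:albet:bddnorm} gives $u_n \to 0$ in $C(\overline{\Omega})$ (along a further subsequence). Normalize $w_n := U_n / \| U_n \|$. Testing \eqref{Upro1758} with $\varphi = U_n$ and dividing by $\| U_n \|^2$ yields
\[
E(w_n) + \lambda_n^{(p-1)/(1-q)} \| U_n \|^{p-1} \int_\Omega w_n^{p+1} + \| U_n \|^{q-1} \int_{\partial\Omega} w_n^{q+1} = 0,
\]
so $E(w_n) \leq 0$ and $\int_{\partial\Omega} w_n^{q+1} = O(\| U_n \|^{1-q}) \to 0$. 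The weak limit $w_\infty$ therefore lies in $H^1_0(\Omega)$; the resonance $\beta_\Omega = 1$ pins $E(w_\infty) = 0$, and simplicity of $\phi_\Omega$ (together with $w_n \geq 0$ and $\| w_n \| = 1$) identifies $w_n \to \phi_\Omega$ strongly in $H^1(\Omega)$, exactly as in the proof of Proposition \ref{prop:unprofi}.

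With this profile in hand, the strategy is to use the test functions $\varphi = 1$ and $\varphi = \phi_\Omega$ to squeeze $\int_{\partial\Omega} U_n$ between two sharp bounds. From $\varphi = 1$ one gets $\int_{\partial\Omega} U_n^q = \int_\Omega U_n (1 - u_n^{p-1})$; dividing by $\| U_n \|$ and using $u_n \to 0$ uniformly together with $w_n \to \phi_\Omega$ in $L^2(\Omega)$, the right-hand side tends to $\int_\Omega \phi_\Omega > 0$. Hölder's inequality $\int_{\partial\Omega} U_n^q \leq | \partial\Omega |^{1-q} \bigl( \int_{\partial\Omega} U_n \bigr)^q$ (valid because $0 < q < 1$) then produces the lower bound $\int_{\partial\Omega} U_n \geq c_1 \| U_n \|^{1/q}$ for large $n$. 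From $\varphi = \phi_\Omega$, an integration by parts using $-\Delta \phi_\Omega = \phi_\Omega$ and $\phi_\Omega = 0$ on $\partial\Omega$ yields
\[
\int_{\partial\Omega} \biggl( -\frac{\partial \phi_\Omega}{\partial \nu} \biggr) U_n = \lambda_n^{(p-1)/(1-q)} \int_\Omega U_n^p \phi_\Omega.
\]
The right side is asymptotic to $\lambda_n^{(p-1)/(1-q)} \| U_n \|^p \int_\Omega \phi_\Omega^{p+1}$ by Sobolev convergence $w_n \to \phi_\Omega$ in $L^{p+1}(\Omega)$, whereas the left side is bounded below by $\min_{\partial\Omega}(-\partial \phi_\Omega / \partial \nu) \cdot \int_{\partial\Omega} U_n$ with strictly positive constant by \eqref{bdry1513}; hence $\int_{\partial\Omega} U_n \leq C_2 \lambda_n^{(p-1)/(1-q)} \| U_n \|^p$ for large $n$.

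Chaining the two bounds gives $c_1 \| U_n \|^{1/q} \leq C_2 \lambda_n^{(p-1)/(1-q)} \| U_n \|^p$, i.e.
\[
\| U_n \|^{(1 - pq)/q} \leq C_3 \, \lambda_n^{(p-1)/(1-q)}.
\]
When $pq = 1$ the left side equals $1$ but the right tends to $0$; when $pq > 1$ the exponent on the left is negative, so the left side blows up while the right still tends to $0$. Either case is a contradiction, proving $\| U_n \| \geq C$. The main delicacy is pairing the two test functions at the right Lebesgue scale: $\varphi = 1$ yields a lower bound on $\int_{\partial\Omega} U_n$ via Hölder's inequality (involving $\| U_n \|^{1/q}$), while $\varphi = \phi_\Omega$ feels the resonance through the Hopf-type estimate \eqref{bdry1513} and yields a matching upper bound of size $\lambda_n^{(p-1)/(1-q)} \| U_n \|^p$; the hypothesis $pq \geq 1$ is precisely what makes these two bounds incompatible as $\lambda_n \to 0^+$.
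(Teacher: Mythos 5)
Your proof is correct. It rests on the same two key identities as the paper's argument --- testing \eqref{Upro1758} with $\varphi=1$ to get $\int_{\partial\Omega}U_n^q\sim \|U_n\|\int_\Omega\phi_\Omega$ (then H\"older with exponents $1/q$, $1/(1-q)$), and with $\varphi=\phi_\Omega$ to get $\int_{\partial\Omega}\bigl(-\frac{\partial\phi_\Omega}{\partial\nu}\bigr)U_n=\lambda_n^{\frac{p-1}{1-q}}\int_\Omega U_n^p\phi_\Omega$ together with \eqref{bdry1513} --- but you assemble them differently. The paper first passes to the orthogonal decomposition $U_n=s_n\phi_\Omega+v_n$, uses these identities only to show the correction term $J_n$ in \eqref{Jn1522} is nonnegative, and then still needs a second limiting argument (normalizing $\psi_n=v_n/\|v_n\|$ and deriving the contradiction $\phi_\Omega\in V$ against $\langle\phi_\Omega\rangle\cap V=\{0\}$, as in Proposition \ref{153319}(ii)). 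You instead work directly with $U_n$ on $\partial\Omega$ (which is legitimate, and in fact equals $v_n$ there since $\phi_\Omega=0$ on $\partial\Omega$), squeeze $\int_{\partial\Omega}U_n$ between $c_1\|U_n\|^{1/q}$ and $C_2\lambda_n^{\frac{p-1}{1-q}}\|U_n\|^p$, and read off the contradiction from the exponent inequality $\|U_n\|^{(1-pq)/q}\le C_3\lambda_n^{\frac{p-1}{1-q}}\to0$ under $pq\ge1$. This is shorter and more self-contained for this particular lemma, bypassing Lemma \ref{prop:E}-type energy estimates and the $V$-orthogonality step entirely; what the paper's heavier route buys is reuse of machinery that is needed in several other places (Propositions \ref{153319}(ii) and \ref{prop827}). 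One small remark: your appeal to $u_n\to0$ in $C(\overline{\Omega})$ is sound (uniform boundedness plus elliptic bootstrap gives precompactness in $C(\overline{\Omega})$, and the limit must be the $H^1$-limit $0$), though $u_n\le1$ together with a.e.\ convergence and dominated convergence would suffice, as in Lemma \ref{lem1809}.
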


\begin{proof}
Assume by contradiction that $\|U_n \|\rightarrow 0$ for a positive solution $(\lambda_{n}, u_{n})$ of \eqref{p} with $\lambda_n \rightarrow 0^{+}$. For $w_n = \frac{U_n}{\| U_n\|}$, $\| w_n\|=1$; up to a subsequence, $w_n \rightharpoonup w_\infty\geq0$, and $w_n \rightarrow w_\infty$ in $L^{p+1}(\Omega)$ and $L^2(\partial\Omega)$ for some $w_\infty \in H^1(\Omega)$. Plugging $(\lambda, U)=(\lambda_n, U_n)$ and $\varphi = U_n$ into \eqref{Upro1758} yields 
\begin{align} \label{Un1819}
\int_\Omega \biggl( |\nabla U_n|^2 - U_n^2 + \lambda_n^{\frac{p-1}{1-q}}U_n^{p+1} \biggr) + \int_{\partial\Omega} U_n^{q+1} = 0.     
\end{align}
Then, we deduce that $\int_{\partial\Omega} w_n^{q+1}\leq \int_{\Omega} w_n^2 \| U_n \|^{1-q} \rightarrow 0$; thus, $\int_{\partial\Omega}w_\infty^{q+1}=0$, and $w_\infty \in H^1_0(\Omega)$. Further, \eqref{Un1819} provides $E(w_n)\leq0$. 
Thus, $w_n \rightarrow \phi_\Omega$ in $H^1(\Omega)$ by arguing similarly as in the second paragraph of the proof of Proposition \ref{prop:unprofi}.

For a contradiction, we exploit the same strategy developed in the proof of Proposition \ref{153319}(ii). For the orthogonal decomposition $U_n = s_n\phi_\Omega + v_n \in \langle \phi_\Omega \rangle \oplus V$ in \eqref{vn1557}, we obtain \eqref{sn1145} to \eqref{vnsn1145} with $u_n$ replaced by $U_n$. Then, the next counterparts of \eqref{En1034} and \eqref{In1034} for \eqref{Un1819} can be deduced by following the line in the proof of Lemma \ref{prop:E}. 
\begin{align}
&E(v_n) + \frac{1}{2} \int_{\partial\Omega} v_n^{q+1} + J_n \leq 0, \quad\mbox{ with } \nonumber \\
&J_n = \frac{1}{2} \int_{\partial\Omega} v_n^{q+1}-2s_n\int_{\partial\Omega} \biggl( 
-\frac{\partial \phi_\Omega}{\partial \nu} \biggr) v_n. \label{Jn1522}
\end{align}
In the same spirit as \eqref{In1338}, we intend to verify that  
\begin{align} \label{Jn}
J_n\geq0 \quad \mbox{ for a sufficiently large $n$. }
\end{align}
Analogously to \eqref{vnsn1033}, we obtain 
\begin{align*}
\int_{\partial\Omega} \biggl( -\frac{\partial \phi_\Omega}{\partial \nu}\biggr) v_n = \lambda_n^{\frac{p-1}{1-q}} s_n^p \int_\Omega \left( 
\phi_\Omega + \frac{v_n}{s_n} \right)^{p} \phi_\Omega, 
\end{align*}
which is used to deduce from \eqref{Jn1522} that 
\begin{align} 
J_n = s^{p+1} \biggl\{ \frac{1}{2} \int_{\partial\Omega} \frac{v_n^{q+1}}{s^{p+1}} - 2\lambda_n^{\frac{p-1}{1-q}}  
\int_\Omega \left(\phi_\Omega + \frac{v_n}{s_n} \right)^{p} \phi_\Omega \biggr\}.  \label{Jn1253}
\end{align} 
Plugging $\varphi = 1$ into \eqref{Upro1758} provides 
\begin{align*}
-\int_\Omega U_n + \lambda_n^{\frac{p-1}{1-q}} \int_\Omega U_n^{p} + \int_{\partial\Omega} U_n^{q} = 0. 
\end{align*}
Substituting $U_n = s_n\phi_\Omega + v_n$, 
\begin{align*}
-\int_\Omega \biggl( \phi_\Omega + \frac{v_n}{s_n} \biggr) + \lambda_n^{\frac{p-1}{1-q}} s_n^{p-1} \int_\Omega \biggl( \phi_\Omega + \frac{v_n}{s_n} \biggr)^p 
+ \int_{\partial\Omega}\frac{v_n^q}{s_n} = 0.
\end{align*}
Taking \eqref{vnsn1145} and \eqref{BL} into account, this implies 
\begin{align*}
\int_{\partial\Omega}\frac{v_n^q}{s_n}  \longrightarrow \int_\Omega \phi_\Omega >0. 
\end{align*}
Hence, we may deduce 
\begin{align*}
c s_n \leq \int_{\partial\Omega} v_n^q.
\end{align*}
By H\"older's inequality, 
\begin{align*}
c s_{n}^{\frac{q+1}{q}}\leq \int_{\partial\Omega}v_{n}^{q+1}.
\end{align*}
Combining this inequality with \eqref{Jn1253} provides 
\begin{align*}
J_n \geq s^{p+1} \biggl\{ c s_n^{\frac{1}{q}-p} - 2\lambda_n^{\frac{p-1}{1-q}}  
\int_\Omega \left(\phi_\Omega + \frac{v_n}{s_n} \right)^{p} \phi_\Omega \biggr\}; 
\end{align*}
thus, \eqref{Jn} follows by the condition $pq\geq1$ since $\lambda_n \rightarrow 0^{+}$ and $s_n \to 0^{+}$. Then, we established 
\begin{align} \label{181004}
    E(v_n) + \frac{1}{2}\int_{\partial\Omega} v_n^{q+1} \leq0 \quad \mbox{ for a sufficiently large $n$.}
\end{align} 
Thanks to \eqref{181004}, the rest of the proof is carried out similarly as the proof of Proposition \ref{153319}(ii). 
\end{proof}

Then, we prove a necessary condition for the existence of bifurcation from $\{ (\lambda,0): \lambda\geq0 \}$ for positive solutions of \eqref{p}.  

\begin{prop} \label{prop827}
Suppose that $\beta_\Omega = 1$. Then, the following {\rm three} assertions are valid. 
\begin{enumerate} \setlength{\itemsep}{3pt} 

\item Assume $pq<1$. Then, it holds that 
\begin{align}
\lim_{n\to \infty} \lambda_n = 0  \label{171325} 
\end{align}
for a positive solution $(\lambda_n,u_n)$ of \eqref{p} such that $\lambda_n> 0$ is bounded from above and $\| u_n\|_{C(\overline{\Omega})}\rightarrow 0$. 
\item Assume $pq=1$. Then, problem \eqref{p} has {\rm no} positive solution $(\lambda,u)$ for $\lambda>0$ in a neighborhood of $(0,0)$ in $[0,\infty)\times C(\overline{\Omega})$, 
implying that there exists $\underline{\lambda}>0$ such that 
\begin{align} \label{171325b}
\underline{\lambda}\leq \lambda_{n} \quad \mbox{ for a sufficiently large $n$}  
\end{align}
if $(\lambda_n, u_n)$ is a positive solution of \eqref{p} such that $\lambda_{n}> 0$ and $\| u_{n} \|_{C(\overline{\Omega})} \rightarrow 0$. 
\item Assume $pq>1$. Then, it holds that 
\begin{align}
\lim_{n\to \infty} \lambda_{n} = \infty  \label{171325c}    
\end{align}
for a positive solution $(\lambda_n,u_n)$ of \eqref{p} such that $\lambda_{n}>0$ and $\| u_n\|_{C(\overline{\Omega})}\rightarrow 0$. 

\end{enumerate}
\end{prop}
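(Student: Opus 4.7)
The proof splits into three cases according to the sign of $pq-1$, but the arguments pair up naturally: parts (i) and the portion of (iii) with $\lambda_n$ bounded away from zero both use the orthogonal-decomposition machinery of Section~\ref{sec:ener}, while part (ii) and the portion of (iii) with $\lambda_n\to 0^+$ both use the rescaling $U_n=\lambda_n^{-1/(1-q)}u_n$ introduced before Lemma~\ref{lem1809}.

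For (i), we argue by contradiction: if \eqref{171325} fails, then along a subsequence $\lambda_n\to\overline{\lambda}>0$, while $\|u_n\|_{C(\overline{\Omega})}\to 0$ forces $\|u_n\|\to 0$. Proposition~\ref{prop:unprofi} (with $\beta_n\equiv 1$ and $\underline{\lambda}=\overline{\lambda}/2$) gives $u_n/\|u_n\|\to \phi_\Omega$ in $H^1(\Omega)$, so the decomposition \eqref{vn1557} makes sense and Lemma~\ref{prop:E}(a) yields $E(v_n)+c\int_{\partial\Omega}v_n^{q+1}\leq 0$ for large $n$. Dividing this inequality by $\|v_n\|^2$ and setting $\psi_n=v_n/\|v_n\|\in V$, the contradiction is obtained verbatim from the last paragraph of the proof of Proposition~\ref{153319}(ii): $\psi_n\to \phi_\Omega$ in $H^1(\Omega)$, which violates the closedness of $V$ together with $V\cap\langle\phi_\Omega\rangle=\{0\}$. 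The part of (iii) in which $\lambda_n$ is additionally bounded from above and away from zero follows by the same scheme with Lemma~\ref{prop:E}(c) replacing (a).

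For (ii), assume toward a contradiction that $\lambda_n\to 0^+$ and $u_n\to 0$ in $C(\overline{\Omega})$ with $\lambda_n>0$. The rescaled function $U_n=\lambda_n^{-1/(1-q)}u_n$ satisfies \eqref{Upro1758}, and Lemmas~\ref{lem1809} and \ref{lem1813} (applicable since $pq=1$) give $0<c\leq \|U_n\|\leq C$. Extract a subsequence with $U_n\rightharpoonup U_\infty$ in $H^1(\Omega)$, strongly in $L^2(\Omega)$ and in $L^s(\partial\Omega)$ for some $s>2$ via the compact trace embedding; the sub-additivity $|a^q-b^q|\leq |a-b|^q$ for $0<q<1$ then gives $U_n^q\to U_\infty^q$ in $L^{s/q}(\partial\Omega)$. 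Passing to the limit in \eqref{Upro1758}, using $\lambda_n^{(p-1)/(1-q)}\to 0$, identifies $U_\infty\geq 0$ as a weak solution of \eqref{Upro1758} at $\lambda=0$. Elliptic regularity and the strong maximum principle upgrade any non-trivial $U_\infty$ to a classical positive solution, which is excluded by Lemma~\ref{lem1819}; hence $U_\infty\equiv 0$. Then $U_n\to 0$ in $L^2(\Omega)$ and in $L^{q+1}(\partial\Omega)$, so the energy identity \eqref{Un1819} forces $\int_\Omega |\nabla U_n|^2\to 0$, whence $\|U_n\|\to 0$, contradicting the lower bound. The remaining portion of (iii), namely $\lambda_n\to 0^+$ with $pq>1$, is handled by exactly the same argument since Lemma~\ref{lem1813} only requires $pq\geq 1$.

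The chief obstacle lies in the passage to the limit in the non-Lipschitz trace term $\int_{\partial\Omega}U_n^q\varphi$ for (ii): there is no obvious pointwise dominating function, so we must rely on the sub-additivity of $x\mapsto x^q$ combined with the compactness of the Sobolev trace operator. A secondary conceptual point is recognizing that the $\lambda_n\to 0$ regime of (iii) is \emph{not} covered by the Proposition~\ref{prop:unprofi}-type arguments (since $\lambda_n$ is not bounded below there) and must be absorbed into the rescaling proof of (ii); this is why the rescaling result Lemma~\ref{lem1813} is stated under the weaker hypothesis $pq\geq 1$ rather than $pq=1$.
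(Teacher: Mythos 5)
Your proposal is correct and follows essentially the same route as the paper: parts (i) and the $\lambda_n\not\to 0$ portion of (iii) via Proposition \ref{prop:unprofi}, Lemma \ref{prop:E}(a)/(c) and the orthogonal-decomposition contradiction from Proposition \ref{153319}(ii), and part (ii) together with the $\lambda_n\to 0^{+}$ portion of (iii) via the rescaling $U_n=\lambda_n^{-1/(1-q)}u_n$ and Lemmas \ref{lem1809}--\ref{lem1813}. The only cosmetic difference is that in (ii) you deduce $U_\infty=0$ first and then extract $\|U_n\|\to 0$ from the energy identity, whereas the paper invokes Lemma \ref{lem1719} to get $U_\infty\neq 0$ directly and contrasts it with Lemma \ref{lem1819}; these are equivalent.
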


\begin{proof} 
(i) For \eqref{171325}, we assume by contradiction that $\lambda_n \rightarrow \lambda_\infty > 0$ and $\| u_n\|_{C(\overline{\Omega})} \rightarrow 0$ for a positive solution $(\lambda_n,u_n)$ of \eqref{p} with $\lambda_n>0$; then, $\| u_n\| \rightarrow 0$. 
This is the case where Proposition \ref{prop:unprofi} applies, and then, Lemma \ref{prop:E}(a) applies; therefore, we have \eqref{Evn1911} for $u_n = s_n \phi_\Omega + v_n \in \langle \phi_\Omega \rangle \oplus V$, following \eqref{vn1557} together with \eqref{sn1145}--\eqref{vnsn1145}. The rest is the same as the proof of Proposition \ref{153319}(ii). 

%----------------------
(ii) For \eqref{171325b}, we employ Lemmas \ref{lem1809} through  \ref{lem1813}. Assume by contradiction that $(\lambda_n, u_n) \rightarrow (0,0)$ in $[0,\infty) \times C(\overline{\Omega})$ for some positive solution $(\lambda_n,u_n)$ of \eqref{p} with $\lambda_n>0$. Then, $\| u_n\| \rightarrow 0$. Problem \eqref{Upro1758} admits the positive solution $(\lambda_n, U_n)$ with 
$U_n = \lambda_n^{-1/(1-q)}u_n$, and then, $U_n$ is bounded in $H^1(\Omega)$ by Lemma \ref{lem1809}. Up to a subsequence, $U_n \rightharpoonup U_\infty\geq0$, and $U_n \rightarrow U_\infty$ in $L^{p+1}(\Omega)$ and $L^2(\partial\Omega)$ for some $U_\infty\in H^1(\Omega)$. Owing to Lemma \ref{lem1813}, Lemma \ref{lem1719} provides that $U_\infty\neq 0$.  
On the other hand, we infer $U_{\infty}=0$. As a matter of fact, 
substitute $(\lambda,U)=(\lambda_n,U_n)$ into \eqref{Upro1758}, and then, taking the limit provides  
\begin{align*}
\int_\Omega \biggl( \nabla U_\infty \nabla \varphi - U_\infty\varphi \biggr) + \int_{\partial\Omega} U_\infty^q \varphi = 0, \quad 
\varphi \in H^1(\Omega). 
\end{align*} 
This implies that $U_\infty$ is a nonnegative weak solution of \eqref{Upro1758} for $\lambda=0$, and thus, Lemma \ref{lem1819} provides the desired assertion, which is a contradiction. 

%-----------------
(iii) For \eqref{171325c}, we assume by contradiction that $(\lambda_n,u_n)$ is a positive solution of \eqref{p} such that $\lambda_n> 0$ is bounded from above and $\| u_n\|_{C(\overline{\Omega})}\rightarrow 0$, and then, the following two possibilities may occur: one is that $\lambda_n \rightarrow \lambda_\infty > 0$, and the other is that $\lambda_n \rightarrow 0$. 
However, the former case does not occur when using the same argument as in item (i), where we used Lemma \ref{prop:E}(c) instead. The latter case does not occur in a similar way when using the same argument as in item (ii). 
\end{proof}

%------------------------------------------------------------
\section{Bounded subcontinua of nonnegative solutions: case $\beta_\Omega=1$ and $pq\leq1$} 

\label{sec:continuum}

In this section, we prove assertions (II) and (III) of Theorem \ref{mt}. 
First, we explore the existence of the subcontinuum $\mathcal{C}_{p,q}^{\ast}$ 
presented in these assertions, and our goal is to verify \eqref{Cpq919}.

Let $\alpha>0$ and $0<\beta <1$. Under the condition $\beta_\Omega = 1$, we study the bifurcation of positive solutions from $\{ (\lambda, 0) : \lambda\geq0\}$ for \eqref{p-abz}. 
Because \eqref{p-abz} is a regular problem for $u\geq -\frac{\alpha}{2}$ in $\overline{\Omega}$, a positive solution $u$ of \eqref{p-abz} belongs to $C^{2+\tau}(\overline{\Omega})$ with $0<\tau<1$, and it is positive in $\overline{\Omega}$.  

When $\beta_\Omega = 1>\beta>0$, we introduce the principal eigenvalue $\lambda_\beta > 0$ of the eigenvalue problem (\cite[Lemma 9]{GM09})
\begin{align} \label{epromu} 
\begin{cases}
-\Delta \varphi = \beta \varphi & \mbox{ in } \Omega, \\
\frac{\partial \varphi}{\partial \nu}  = -\lambda \varphi & \mbox{ on } \partial\Omega. 
\end{cases} 
\end{align}
It is well known that $\lambda_\beta$ is the largest and simple, and its associated eigenfunction $\varphi_\beta$ satisfies $\varphi_\beta >0$ in $\overline{\Omega}$. To look for bifurcation points on $\{ (\lambda, 0)\}$ for positive solutions of \eqref{p-abz}, we consider the linearized eigenvalue problem \eqref{165331} associated to \eqref{p-abz}. 
Substituting $u=0$ into \eqref{165331}, we obtain \eqref{epromu0}, and then, 
in accordance with \eqref{epromu}, problem \eqref{epromu0} has the largest (principal) eigenvalue $\lambda_{\alpha, \beta} > 0$ satisfying $\lambda_\beta = \lambda_{\alpha, \beta} \, \alpha^{q-1}$. Therefore, $\lambda_{\alpha,\beta}$ is simple with its associated eigenfunction $\varphi_{\beta}$, and it holds that 
\begin{align} \label{2026s}
\lambda_{\alpha, \beta} \longrightarrow 0 \quad \mbox{ as } \ \alpha \to 0^{+}.  
\end{align}
For bifurcation analysis, problem \eqref{p-abz} is reduced to an operator equation for a strongly positive mapping in the framework of $C(\overline{\Omega})$ by arguing as in \cite[Section 2] {Um2013}; then, the local bifurcation theory from simple eigenvalues and unilateral global bifurcation theory \cite{CR71, Ra71, LGbook} are applied to deduce that problem \eqref{p-abz} has a {\it component} (i.e., maximal, closed, and connected subset) $\mathcal{C}_{\alpha, \beta}=\{ (\lambda, u) \}$ in $[0, \infty)\times C(\overline{\Omega})$ for the nonnegative solutions bifurcating at $(\lambda_{\alpha,\beta}, 0)$ (cf.\ \cite[Proposition 2.2]{Um2013}). In addition, $u$ is a positive solution of 
\eqref{p-abz} (i.e., \eqref{p-ab}) such that $u>0$ in $\overline{\Omega}$ if $(\lambda,u) \in \mathcal{C}_{\alpha,\beta}\setminus \{ (\lambda_{\alpha, \beta}, 0) \}$, ensured by the strong maximum principle and boundary point lemma \cite{PW67}. More precisely, the positive solution set of \eqref{p-abz} does not meet any point on $\{ (\lambda, 0)\}$ except for $(\lambda_{\alpha, \beta},0)$ because the principal eigenvalue $\lambda_{\alpha, \beta}$ of \eqref{epromu0} is unique. Thus, 
\[
\mathcal{C}_{\alpha, \beta} \cap \{ (\lambda, 0) : \lambda\geq0\} = \{ (\lambda_{\alpha, \beta}, 0)\}. 
\]
Lastly, $\mathcal{C}_{\alpha, \beta}$ is bounded in $[0, \infty) \times C(\overline{\Omega})$ by virtue of Propositions \ref{lem:albet:bddnorm} and \ref{153319}(i), and then, by Proposition \ref{215925}, we deduce that 
\begin{align} \label{221725}
\mathcal{C}_{\alpha, \beta} \cap \{ (0,u) : u\geq0 \} = \{ (0, \beta^{\frac{1}{p-1}})\}, 
\end{align}
see Figure \ref{figalbe}. 

%------------------
 % \begin{figure}[H]  
    \begin{figure}[!htb]
    \centering 
    \includegraphics[scale=0.25]{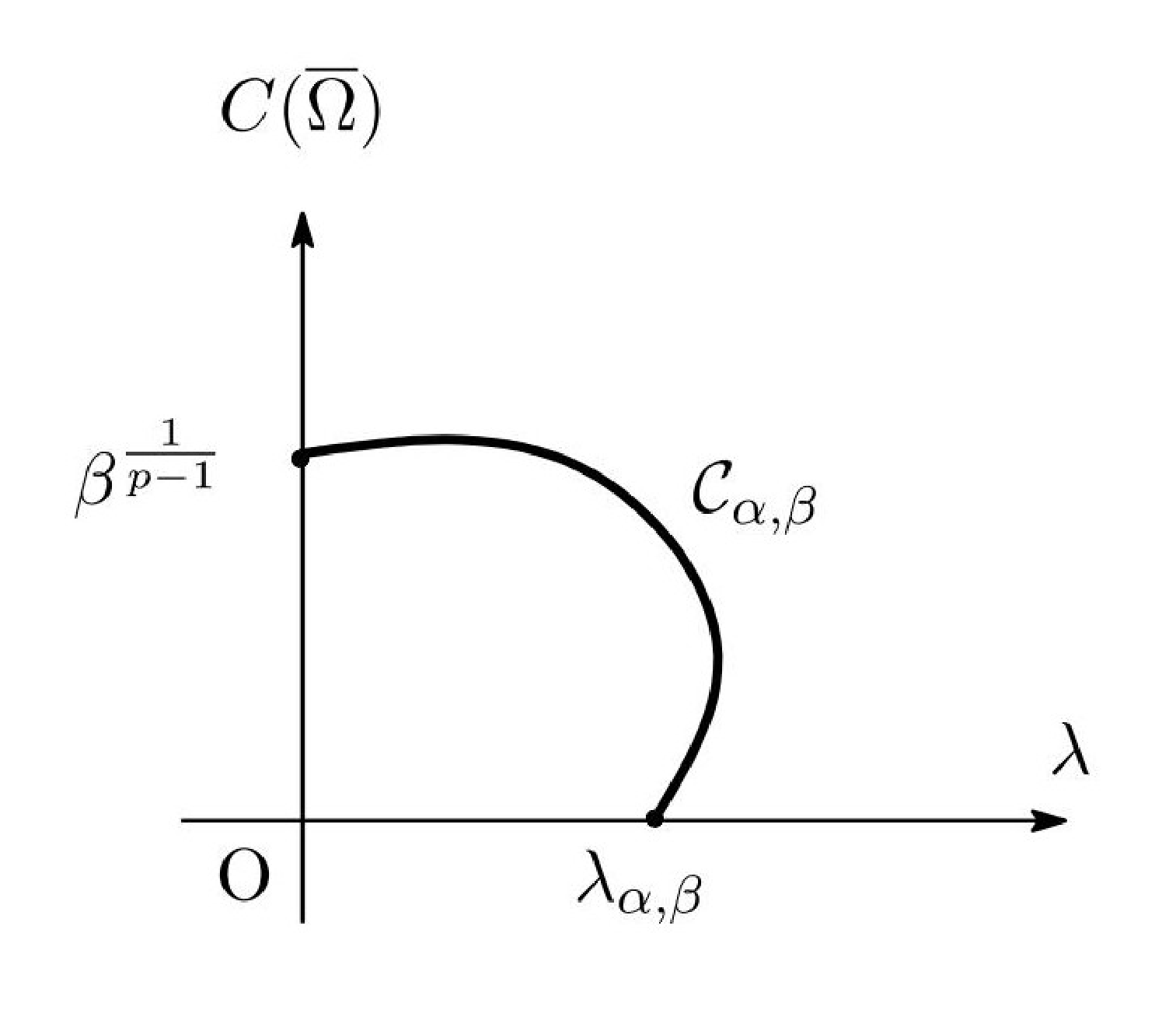} 
		  \caption{Admissible case for $\mathcal{C}_{\alpha, \beta}$.}
		\label{figalbe} 
    \end{figure} 
%-------------------

Then, we investigate the limiting behavior of the bounded component $\mathcal{C}_{\alpha, \beta}$ as $\alpha \to 0^{+}$. 
To this end, we employ a topological method proposed by Whyburn \cite{Wh64}, which reads as follows: Let $X$ be a metric space, and let $E_n \subset X$. Let 
\begin{align*}
& \varliminf_{n\to \infty} E_n := \{ x \in X : \lim_{n\to \infty}{\rm dist}\, (x, E_n) = 0 \}, \\
& \varlimsup_{n\to \infty} E_n := \{ x \in X : \varliminf_{n\to \infty}{\rm dist}\, (x, E_n) = 0 \}, 
\end{align*}
and then, we have (\cite[(9.12) Theorem]{Wh64}) 
\begin{theorem}[Whyburn]    \label{thm:W}
Assume that $\left\{ E_{n} \right\}_{n}$ is a sequence of connected sets which satisfies that 
\begin{enumerate} \setlength{\itemsep}{3pt} 
\item[(i)] $\displaystyle \bigcup_{n\geq 1}E_n$ is precompact,  
\item[(ii)] $\displaystyle \varliminf_{n\to \infty} E_n \neq \emptyset$. 
\end{enumerate}
Then, $\displaystyle \varlimsup_{n\to \infty} E_n$ is nonempty, closed and connected. 
\end{theorem}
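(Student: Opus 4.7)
The plan is to verify the three claims (nonempty, closed, connected) in turn, with only the last one requiring real work.

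Nonemptiness is immediate: one always has $\varliminf_n E_n \subseteq \varlimsup_n E_n$ because the ordinary limit dominates the lower limit of $\mathrm{dist}(x,E_n)$, and by hypothesis (ii) the former is nonempty. Closedness of $\varlimsup_n E_n$ is a standard diagonal argument: if $x_k \in \varlimsup_n E_n$ and $x_k \to x$, I would choose for each $k$ an index $n_k$ with $\mathrm{dist}(x_k,E_{n_k})<1/k$ and $n_k$ strictly increasing, and then the triangle inequality gives $\mathrm{dist}(x,E_{n_k}) \to 0$, so $x \in \varlimsup_n E_n$.

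The substantive step is connectedness, which I would prove by contradiction. Suppose $\varlimsup_n E_n = M_1 \cup M_2$ with $M_1,M_2$ nonempty, closed, and disjoint. Since $\varlimsup_n E_n$ is a closed subset of the compact set $K := \overline{\bigcup_n E_n}$, both $M_1$ and $M_2$ are compact, so $2\eta := \mathrm{dist}(M_1,M_2)>0$. Set $U_i := \{x \in X : \mathrm{dist}(x,M_i)<\eta\}$; these are open, disjoint, and $M_i \subset U_i$. The first step is to show that $E_n \subset U_1 \cup U_2$ for all sufficiently large $n$: otherwise one could pick $w_n \in E_n \setminus (U_1 \cup U_2)$ along an infinite set of indices, use precompactness of $\bigcup_n E_n$ to extract a convergent subsequence $w_{n_k} \to w \in K \setminus (U_1 \cup U_2)$, and observe $w \in \varlimsup_n E_n = M_1 \cup M_2 \subset U_1 \cup U_2$, a contradiction. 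By the connectedness of each $E_n$ and the openness and disjointness of $U_1, U_2$, for all large $n$ we have either $E_n \subset U_1$ or $E_n \subset U_2$; call the corresponding index sets $I_1$ and $I_2$.

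The second step is to show both $I_1,I_2$ are infinite: if, say, $I_2$ were finite, then for any $x \in M_2$ and all large $n \in I_1$ one would have $\mathrm{dist}(x,E_n) \geq \mathrm{dist}(x,U_1) \geq \eta$ (since any $y \in U_1$ satisfies $\mathrm{dist}(x,y) \geq \mathrm{dist}(x,M_1) - \mathrm{dist}(y,M_1) > 2\eta - \eta$), contradicting $x \in \varlimsup_n E_n$. Now I invoke hypothesis (ii): pick $z \in \varliminf_n E_n$, so $\mathrm{dist}(z,E_n) \to 0$ along the \emph{entire} sequence. Since $z \in \varlimsup_n E_n = M_1 \cup M_2$, say $z \in M_1$; then for every $n \in I_2$ (an infinite set) the same triangle-inequality estimate yields $\mathrm{dist}(z,E_n) \geq \eta$, contradicting $\mathrm{dist}(z,E_n) \to 0$. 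The symmetric case $z \in M_2$ is identical. This contradiction forces $\varlimsup_n E_n$ to be connected.

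The main obstacle is the second step above — showing that the eventual dichotomy $E_n \subset U_1$ or $E_n \subset U_2$ cannot collapse to only one alternative. It is exactly here that the two hypotheses combine: precompactness supplies the separating open sets $U_1, U_2$ and ensures that any trace of $E_n$ outside $U_1 \cup U_2$ would produce a spurious element of $\varlimsup_n E_n$, while $\varliminf_n E_n \neq \emptyset$ supplies a single point $z$ that must be approached by \emph{every} $E_n$, forbidding the population $\{E_n\}$ from splitting permanently between the two open neighborhoods. Without hypothesis (ii) the classical counterexample of two disjoint arcs visited alternately shows that connectedness can fail, so this is exactly where (ii) has to be used.
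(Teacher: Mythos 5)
Your proof is correct and complete. Note first that the paper itself gives no proof of this statement: it is quoted verbatim as \cite[(9.12) Theorem]{Wh64}, so there is no in-paper argument to compare against. What you have written is the standard self-contained proof of Whyburn's limit theorem, and every step checks out: the inclusion $\varliminf_n E_n \subseteq \varlimsup_n E_n$ gives nonemptiness, the diagonal extraction (using that $\varliminf_n \mathrm{dist}(x_k,E_n)=0$ supplies infinitely many admissible indices, so the $n_k$ can be taken strictly increasing) gives closedness, and the separation argument for connectedness is sound. In particular, you correctly identify that $\varlimsup_n E_n$ sits inside the compact set $\overline{\bigcup_n E_n}$, so the two pieces $M_1,M_2$ of a putative disconnection are compact and positively separated; that the trace of $E_n$ outside $U_1\cup U_2$ would manufacture a point of $\varlimsup_n E_n$ outside $M_1\cup M_2$; that nonemptiness of $M_1$ and $M_2$ forces both index classes $I_1,I_2$ to be infinite; and that a point $z\in\varliminf_n E_n$, which must be approached by the \emph{whole} sequence, cannot coexist with an infinite family of $E_n$ confined to the opposite neighborhood. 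The only implicit assumption is that the $E_n$ are nonempty (needed for the dichotomy $E_n\subset U_1$ or $E_n\subset U_2$ and tacit in hypothesis (ii)), which is harmless in the paper's application where each $E_n=\mathcal{C}_{\alpha_n,\beta}$ or $\mathcal{C}_{\beta_n}$ contains a prescribed point.
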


We know from Propositions \ref{lem:albet:bddnorm} and \ref{153319}(i) that
\begin{align} \label{2021a}
\bigcup_{0<\alpha \leq \alpha_0} \mathcal{C}_{\alpha, \beta} \subset 
\{ (\lambda, u) \in [0, \infty)\times C(\overline{\Omega}) : 0\leq \lambda \leq \Lambda_\beta, \ 0\leq u \leq 1 \}. 
\end{align}
Let $\alpha_n \to 0^{+}$. Then, by a standard bootstrap argument attributed to elliptic regularity, we can deduce from \eqref{2021a} that $\bigcup_{n\geq1} \mathcal{C}_{\alpha_n, \beta}$ is precompact. In addition, from \eqref{221725}, we derive that 
\begin{align} \label{2021b}
(0, \beta^{\frac{1}{p-1}}) \in \varliminf_{n} \mathcal{C}_{\alpha_n, \beta}. 
\end{align}
Thus, Theorem \ref{thm:W} applies to $\{ \mathcal{C}_{\alpha_n, \beta}\}_n$, and 
we deduce that 
\[ 
\mathcal{C}_{\beta}:= \varlimsup_{n} \mathcal{
C}_{\alpha_n, \beta}
\]
is bounded, closed, and connected in $[0, \infty)\times C(\overline{\Omega})$. 
Moreover, $(0,0), (0, \beta^{\frac{1}{p-1}})\in \mathcal{C}_\beta$ because of \eqref{2026s} and \eqref{2021b}. 
As a matter of fact, $\mathcal{C}_{\beta}$ joins $(0,0)$ to $(0, \beta^{\frac{1}{p-1}})$. 

We claim that $u$ is a nonnegative solution of \eqref{p-al0} if $(\lambda, u) \in \mathcal{C}_ \beta$. Indeed, by the definition of $\mathcal{C}_{\beta}$, for $(\lambda, u) \in \mathcal{C}_ \beta$ there may exist $(\lambda_n, u_n) \in \mathcal{C}_{\alpha_n, \beta}$ with $\alpha_n \to 0^{+}$ such that $(\lambda_n, u_n) \rightarrow (\lambda, u)$ in $[0, \infty)\times C(\overline{\Omega})$, and then,  
\begin{align} \label{wfaln}
\int_\Omega \biggl( \nabla u_n \nabla \varphi - \beta u_n \varphi + u_n^p \varphi \biggr) + \lambda_n \int_{\partial \Omega} (u_n +\alpha_n)^{q-1}u_n \varphi = 0, \quad \varphi \in H^1(\Omega). 
\end{align}
By substituting $\varphi = u_{n}$ into \eqref{wfaln}, it is observed that $u_n$ is bounded in $H^1(\Omega)$; thus, up to a subsequence, $u_n \rightharpoonup u$, $u_n \rightarrow u$ in $L^2(\Omega)$ and $L^2(\partial \Omega)$, and $u_n \rightarrow u$ a.e.\ in $\Omega$ and on $\partial\Omega$, and thus, $u\in H^1(\Omega)$. 
Taking the limit, we deduce by the Lebesgue dominated convergence theorem that
$\int_\Omega u_n^p \varphi \rightarrow \int_\Omega u^p \varphi$; thus, 
\begin{align*}
\int_\Omega \biggl( \nabla u_n \nabla \varphi - \beta u_n \varphi + u_n^p \varphi \biggr)  \longrightarrow 
\int_\Omega \biggl( \nabla u \nabla \varphi - \beta u \varphi + u^p \varphi \biggr). 
\end{align*}
A similar argument is carried out on $\partial \Omega$. Since 
\begin{align*}
    (u_n +\alpha_n)^{q-1}u_n = \biggl( \frac{u_n}{u_n + \alpha_n} \biggr)^{1-q} u_n^q 
    \leq 1, 
\end{align*}
and 
\begin{align*}
 &   \biggl( \frac{u_n}{u_n + \alpha_n} \biggr)^{1-q} u_n^q \longrightarrow u^q \quad \mbox{  for } \ x \in \partial\Omega \ \ \mbox{ satisfying that } \ u(x)>0, \\ 
 & 0\leq \biggl( \frac{u_n}{u_n + \alpha_n} \biggr)^{1-q} u_n^q \leq u_n^q \longrightarrow 0 \quad \mbox{ for } \ x\in \partial\Omega \ \ \mbox{ satisfying that } \ u(x)=0, 
 \end{align*}
we use the Lebesgue dominated convergence theorem again to obtain 
\begin{align*}
    \lambda_n \int_{\partial \Omega} (u_n +\alpha_n)^{q-1}u_n \varphi \longrightarrow \lambda \int_{\partial\Omega} u^q \varphi. 
\end{align*}
Thus, taking the limit in \eqref{wfaln}, 
\begin{align*}
    \int_\Omega \biggl( \nabla u \nabla \varphi - \beta u \varphi + u^p \varphi \biggr) + \lambda \int_{\partial \Omega} u^q \varphi = 0,  
\end{align*}
as desired. 

Then, we claim that $\mathcal{C}_\beta \setminus \{ (0,0), (0, \beta^{\frac{1}{p-1}})\}$ consists of the positive solutions of \eqref{p-al0} with $\lambda > 0$. 
Indeed, Proposition \ref{215925} provides   
\begin{align}  \label{172531}
\mathcal{C}_\beta \cap \{ (0,u) : u\geq0 \} \} = \left\{ (0,0), \, (0, \beta^{\frac{1}{p-1}}) \right\}, 
\end{align}
and Proposition \ref{222825} provides 
\begin{align*}
\mathcal{C}_\beta \cap \{ (\lambda, 0) : \lambda \geq 0 \} = \left\{ (0,0) \right\}. 
\end{align*}
see Figure \ref{figbeta}. The desired claim follows.

%------------------
 % \begin{figure}[H]  
    \begin{figure}[!htb]
    \centering 
    \includegraphics[scale=0.25]{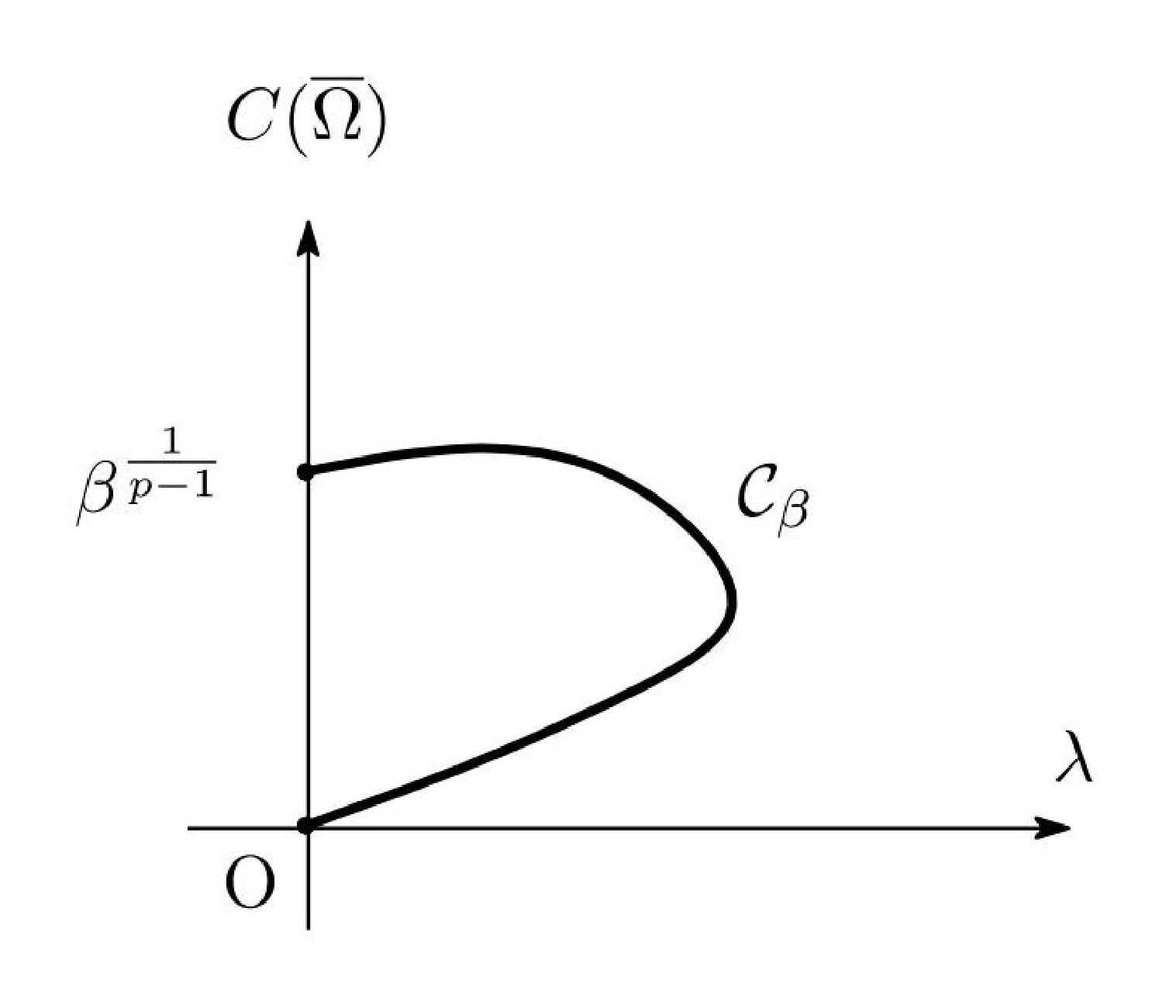} 
	  \caption{Admissible case for $\mathcal{C}_{\beta}$.}
		\label{figbeta} 
    \end{figure} 
%-------------------

Then, additionally assuming $pq\leq1$, we evaluate the limiting behavior of $\mathcal{C}_\beta$ as $\beta \to 1^{-}$ by employing the same approach as that of $\mathcal{C}_{\alpha,\beta}$ as $\alpha \to 0^{+}$. 
If $pq\leq1$, then we know from Propositions \ref{lem:albet:bddnorm} and \ref{153319}(ii) that $\mathcal{C}_\beta$ is bounded in $[0, \infty)\times C(\overline{\Omega})$, uniformly in $\beta\in [\beta_{0},1]$, as $\beta \to 1^{-}$; thus, 
\begin{align*}
\bigcup_{\beta_0\leq \beta < 1}\mathcal{C}_\beta 
\subset \{ (\lambda, u) \in [0, \infty)\times C(\overline{\Omega}) : 0\leq \lambda \leq \Lambda_0, \ 0\leq u \leq 1 \}. 
\end{align*}
For $\beta_n \to 1^{-}$, we see in the same manner that 
$\bigcup_{n\geq1}\mathcal{C}_{\beta_{n}}$ is precompact. Since $(0,0)\in \mathcal{C}_{\beta_{n}}$, and since $(0,\beta_n^{1/(p-1)})\in \mathcal{C}_{\beta_n}$ from \eqref{172531}, we have $(0,0), (0,1) \in \varliminf_n \mathcal{C}_{\beta_n}$. Theorem \ref{thm:W} now applies, and then, 
\[
\mathcal{C}^{\ast}_{p,q}:= \varlimsup_n \mathcal{C}_{\beta_n}
\]
is bounded, closed and connected in $[0, \infty)\times C(\overline{\Omega})$ and joins $(0,0)$ to $(0,1)$. In addition, it is seen similarly as above that $\mathcal{C}^{\ast}_{p,q}$ consists 
of the nonnegative solutions of \eqref{p}. 
Proposition \ref{215925} shows  
\begin{align} \label{Cpq919}
    \mathcal{C}^{\ast}_{p,q} \cap \{ (0, u) : u\geq 0\} = \{ (0,0), (0,1)\}. 
\end{align}

\vspace{3pt} 

\begin{proof}[End of Proofs for assertions (II) and (III) of Theorem \ref{mt}] 
We evaluate the case $pq<1$ and end the proof of assertion (II). 
In this case,  Assertion \eqref{171325} shows 
\begin{align} \label{Cpq916}
\mathcal{C}^{\ast}_{p,q} \cap \{ (\lambda, 0) : \lambda > 0\} = \emptyset. 
\end{align}
We have verified that $\mathcal{C}_{p,q}^{\ast}$ is connected in $[0,\infty)\times C(\overline{\Omega})$; therefore, combining \eqref{Cpq919}, \eqref{Cpq916},  Proposition \ref{215925}, and Remark \ref{rem1030}(i) provides us with the existence of the positive solutions $U_{1}$ and $U_{2}$ claimed in item (II). 
The nonexistence assertion follows from Proposition \ref{153319}(ii). Assertion (II-i) follows from Proposition \ref{215925} and \eqref{171325}. 
Regarding assertion (II-ii), the existence assertion for $\mathcal{C}_{p,q}^{\ast}$ follows from \eqref{Cpq919}. The rest is evaluated by assertion (II-i). 
The proof of assertion (II) of Theorem \ref{mt} is complete.

Then, we evaluate the case $pq=1$ and end the proof of assertion (III). The unique positive solution $U_{1}$ claimed in item (III) is obtained by the combination of \eqref{Cpq919}, \eqref{171325b},  Proposition \ref{215925}, and Remark \ref{rem1030}(i). 
The nonexistence assertion follows from Proposition \ref{153319}(ii). 
Assertion (III-i) follows from \eqref{171325b} and Proposition \ref{215925}. 
Regarding assertion (III-ii), the existence assertion for $\mathcal{C}_{p,q}^{\ast}$ follows from \eqref{Cpq919}. The rest is deduced from assertion (III-i). The proof of assertion (III) of Theorem \ref{mt} is now complete. 
\end{proof}

%------------------------------
\section{Unbounded positive solution sets: case $\beta_\Omega = 1$ and $pq>1$} 

\label{sec:sub}

In the final section, we consider the case where $\beta_\Omega = 1$ and $pq>1$ 
and prove assertion (I) of Theorem \ref{mt}. We exploit the usual sub-- and supersolution method \cite[(2.1) Theorem]{Am76N} to verify the existence part. 
A function $u \in C^{2+\theta}(\overline{\Omega})$, $0<\theta <1$, satisfying that $u>0$ in $\overline{\Omega}$ is said to be a {\it subsolution} of \eqref{p} if the following condition holds. 
\begin{align*} 
\begin{cases}
-\Delta u \leq u - u^{p} & \mbox{ in } \Omega, \\
\frac{\partial u}{\partial \nu}  \leq -\lambda u^{q} & \mbox{ on } \partial\Omega. 
\end{cases}  
\end{align*}
A {\it supersolution} of \eqref{p} is defined by reversing the inequalities.

It is clear that $\psi = 1$ is a supersolution of \eqref{p} for every $\lambda > 0$. 
Then, we construct a smaller subsolution of \eqref{p} than $\psi = 1$. 
For $\varepsilon, \tau>0$, set 
\[
\phi_\varepsilon (x) = \varepsilon (\phi_\Omega (x) + \varepsilon^\tau), \quad x \in \overline{\Omega}, 
\]
and then, we have the following. Although the proof of Lemma \ref{prop:sub} has been already evaluated in \cite[Lemma 3.1]{Um2024}, we present it for the sake of the completeness of our arguments in this paper.

%---------------------------
\begin{lem} \label{prop:sub}
Assume that $\beta_\Omega = 1$ and $pq>1$. 
Let $\frac{1-q}{q} < \tau < p-1$. 
Then, for $\Lambda > 0$ there exists $\overline{\varepsilon}=\overline{\varepsilon}(\tau, \Lambda)>0$ such that $\phi_{\varepsilon}$ with 
$\varepsilon \in (0, \overline{\varepsilon}]$ is a subsolution of \eqref{p} for  $\lambda \in [0, \Lambda]$. 
Furthermore, $u \geq \phi_{\overline{\varepsilon}}$ in $\overline{\Omega}$ for a positive solution $u > 0$ in $\overline{\Omega}$ of \eqref{p} with $\lambda \in [0,\Lambda]$. Here, $\overline{\varepsilon}$ does not depend on $\lambda \in [0,\Lambda]$. 
\end{lem}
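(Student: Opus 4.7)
The plan is to establish both parts by direct computation together with a sliding-parameter comparison. First, using $-\Delta \phi_{\Omega}=\phi_{\Omega}$ (which holds because $\beta_{\Omega}=1$), I compute $-\Delta \phi_{\varepsilon}=\varepsilon \phi_{\Omega}$ in $\Omega$ and $\phi_{\varepsilon}|_{\partial\Omega}=\varepsilon^{1+\tau}$. The interior inequality $-\Delta \phi_{\varepsilon}\leq \phi_{\varepsilon}-\phi_{\varepsilon}^{p}$ then reduces, after dividing by $\varepsilon>0$, to $\varepsilon^{p-1-\tau}(\phi_{\Omega}+\varepsilon^{\tau})^{p}\leq 1$; since $\phi_{\Omega}$ is bounded on $\overline{\Omega}$ and $p-1-\tau>0$ by the choice of $\tau$, this holds for all sufficiently small $\varepsilon>0$. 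The boundary inequality reduces to $-\partial \phi_{\Omega}/\partial\nu\geq \lambda\,\varepsilon^{(1+\tau)q-1}$ pointwise on $\partial\Omega$: the bound \eqref{bdry1513} supplies a strictly positive lower bound for $-\partial \phi_{\Omega}/\partial\nu$, while the exponent $(1+\tau)q-1>0$ (guaranteed by $\tau>(1-q)/q$) forces the right-hand side to $0$ uniformly in $\lambda\in[0,\Lambda]$ as $\varepsilon\to 0^{+}$. I would then choose $\overline{\varepsilon}(\tau,\Lambda)$ small enough that both inequalities hold, with the boundary one \emph{strictly}, for every $\varepsilon\in(0,\overline{\varepsilon}]$ and every $\lambda\in[0,\Lambda]$.

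For the ordering assertion, fix a positive solution $u>0$ in $\overline{\Omega}$ of \eqref{p} with $\lambda\in[0,\Lambda]$. Since $\inf_{\overline{\Omega}}u>0$ while $\|\phi_{\varepsilon}\|_{C(\overline{\Omega})}\to 0$ as $\varepsilon\to 0^{+}$, the relation $\phi_{\varepsilon}\leq u$ on $\overline{\Omega}$ holds for all sufficiently small $\varepsilon>0$. I would then set
\[
\varepsilon^{\ast}=\sup\bigl\{\varepsilon\in(0,\overline{\varepsilon}]:\phi_{\varepsilon}\leq u \text{ on }\overline{\Omega}\bigr\}
\]
and aim to show $\varepsilon^{\ast}=\overline{\varepsilon}$. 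Supposing the opposite, continuity gives $\phi_{\varepsilon^{\ast}}\leq u$ on $\overline{\Omega}$ with a contact point $x_{0}$. Fixing $K>0$ so that $t\mapsto Kt+t-t^{p}$ is nondecreasing on $[0,1]$, I obtain $(-\Delta+K)(u-\phi_{\varepsilon^{\ast}})\geq 0$ in $\Omega$ from the subsolution inequality together with Proposition \ref{lem:albet:bddnorm}. The strong maximum principle then leaves two possibilities: either $u\equiv \phi_{\varepsilon^{\ast}}$ on $\overline{\Omega}$, which is ruled out because $\phi_{\Omega}$ is nonconstant and hence $\phi_{\varepsilon^{\ast}}$ cannot satisfy the interior equation with equality; or $u-\phi_{\varepsilon^{\ast}}>0$ in $\Omega$ and $x_{0}\in\partial\Omega$.

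In the second case, Hopf's boundary point lemma forces $\partial(u-\phi_{\varepsilon^{\ast}})/\partial\nu(x_{0})<0$. On the other hand, combining $\partial u/\partial\nu(x_{0})=-\lambda u(x_{0})^{q}$ with the \emph{strict} subsolution inequality $\partial \phi_{\varepsilon^{\ast}}/\partial\nu(x_{0})<-\lambda \phi_{\varepsilon^{\ast}}(x_{0})^{q}$ (secured in the first paragraph) and using $u(x_{0})=\phi_{\varepsilon^{\ast}}(x_{0})$ yields $\partial(u-\phi_{\varepsilon^{\ast}})/\partial\nu(x_{0})>0$, a contradiction. The main obstacle is arranging the strict boundary subsolution inequality uniformly for $\lambda\in[0,\Lambda]$; this is precisely what forces $\tau>(1-q)/q$, and why the threshold $\overline{\varepsilon}$ must be chosen to depend on $\Lambda$ but not on the individual $\lambda\in[0,\Lambda]$, so that the Hopf contradiction applies simultaneously to every admissible parameter.
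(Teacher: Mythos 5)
Your proposal is correct and follows essentially the same route as the paper: the same direct computation (using $-\Delta\phi_\Omega=\phi_\Omega$, the bound \eqref{bdry1513}, and the exponent conditions $\tau<p-1$, $\tau>\frac{1-q}{q}$) for the subsolution property, and the same sliding/contact-point comparison for the ordering. The only difference is cosmetic: at the boundary contact point you invoke Hopf's lemma against the \emph{strict} subsolution inequality directly, whereas the paper runs the boundary point lemma through a Robin operator $\frac{\partial}{\partial\nu}+M$ using the one-sided Lipschitz bound $\frac{u^{q}-\phi_{\varepsilon_1}^{q}}{u-\phi_{\varepsilon_1}}\leq qc^{q-1}$ away from zero; both are valid.
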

\begin{proof}
For the former assertion, take $0<\varepsilon\leq 1$; then, we use the condition $p-\tau -1>0$ to deduce that  
\begin{align*}
-\Delta \phi_\varepsilon -\phi_\varepsilon +\phi_{\varepsilon}^{p} 
\leq \varepsilon^{1+\tau}\biggl\{ -1 + \varepsilon^{p-\tau -1} \left( 1+\max_{\overline{\Omega}}\phi_\Omega \right)^p \biggr\}<0 
\quad\mbox{ in } \Omega  
\end{align*}
if 
\[
0<\varepsilon \leq \overline{\varepsilon}_{1} < 
\min 
\left( 
1, \;  \left( 
			\frac{1}{(1+\max_{\overline{\Omega}} \phi_{\Omega})^{p}} 	
				\right)^{\frac{1}{p-\tau-1}}  
\right). 
\]
For $\Lambda > 0$, we use \eqref{bdry1513} and the condition $\tau > \frac{1-q}{q}$ to deduce that 
\begin{align*}
\frac{\partial \phi_\varepsilon}{\partial \nu} + \lambda \phi_\varepsilon^q 
= \varepsilon \frac{\partial \phi_{\Omega}}{\partial \nu} + \lambda \varepsilon^{(1+\tau)q} 
\leq \varepsilon (-c_1 + \Lambda \varepsilon^{q+\tau q -1}) < 0 \quad 
\mbox{ on } \partial\Omega 
\end{align*}
if $\lambda\in [0,\Lambda]$ and 
\[
0<\varepsilon \leq \overline{\varepsilon}_{2} < \left( 
\frac{c_{1}}{\Lambda} \right)^{\frac{1}{q+\tau q-1}},   
\]
where 
$c_1 = \min_{\partial\Omega}(-\frac{\partial \phi_{\Omega} }{\partial \nu})>0$. Therefore, $\overline{\varepsilon} = \min (\overline{\varepsilon}_{1}, \overline{\varepsilon}_{2})$ is the desired constant.

For the latter assertion, we assume to the contrary that 
$u\not\geq \phi_{\overline{\varepsilon}}$ in $\overline{\Omega}$ for a positive solution $u>0$ in $\overline{\Omega}$ of \eqref{p} 
with $\lambda \in [0, \Lambda]$. Because $\varepsilon \mapsto \phi_\varepsilon$ is increasing and 
$\| \phi_\varepsilon \|_{C(\overline{\Omega})} \rightarrow 0$ as $\varepsilon \to 0^{+}$, we can take $\varepsilon_1\in (0, \overline{\varepsilon})$ such that 
\begin{align}
\begin{cases}
u\geq \phi_{\varepsilon_1} &  \ \ \mbox{ in } \ \overline{\Omega}, \\
u(x_1) = \phi_{\varepsilon_1}(x_1) & \ \ \mbox{ for some } \ x_1 \in \overline{\Omega}. 
\end{cases} \label{bdry1251}
\end{align}
Take a small $c>0$ such that $u, \phi_{\varepsilon_1}\geq c$ in $\overline{\Omega}$, and then, 
\[
\frac{u^{q}-\phi_{\varepsilon_{1}}^{q}}{u-\phi_{\varepsilon_{1}}}\leq q c^{q-1} \quad\mbox{ if } \ u>\phi_{\varepsilon_{1}}. 
\]
We choose a sufficiently large $K>0$ such that 
$f_K(t)=Kt + t-t^p$ is increasing for $t \in \left[ 0, \, \| u \|_{C(\overline{\Omega})} \right]$ and a sufficiently large $M>0$ such that $M-\Lambda q c^{q-1}>0$. 
Since $\phi_{\varepsilon_1}$ is a subsolution (not a positive solution) 
of \eqref{p}, we deduce that 
\begin{align*} 
(-\Delta + K)(u-\phi_{\varepsilon_1})> f_K(u) - f_K(\phi_{\varepsilon_1}) \geq 0 
\quad\mbox{ in } \ \Omega,
\end{align*}
and for $x \in \partial\Omega$ satisfying $u>\phi_{\varepsilon_1}$, 
\begin{align*}
\biggl( \frac{\partial}{\partial \nu} + M \biggr) (u-\phi_{\varepsilon_1})
&> -\lambda u^q + \lambda \phi_{\varepsilon_1}^q + M(u-\phi_{\varepsilon_1})  \\ 
&= \biggl( M - \lambda \frac{u^q - \phi_{\varepsilon_1}^q}{u - \phi_{\varepsilon_1}} \biggr)(u-\phi_{\varepsilon_1}) \\
& \geq (M - \Lambda q c^{q-1})(u-\phi_{\varepsilon_1})>0. %\label{KpOm}
\end{align*}
Hence, applying the strong maximum principle and boundary point lemma 
provides $u-\phi_{\varepsilon_1}>0$ in $\overline{\Omega}$, which contradicts \eqref{bdry1251}. 
\end{proof} 

\begin{proof}[Proof of assertion (I) of Theorem \ref{mt}] 
First, we verify the existence part. Let $\lambda>0$. We may assume that $\phi_{\overline{\varepsilon}}\leq1$ in $\overline{\Omega}$ in Lemma \ref{prop:sub}. 
The sub-- and supersolution method \cite[(2.1) Theorem]{Am76N} applies, and then, 
problem \eqref{p} possesses 
a minimal positive solution $\underline{u}$ and a maximal positive solution $\overline{u}$ in the order interval $[\phi_{\overline{\varepsilon}},1]$, meaning that 
$\underline{u}\leq u \leq \overline{u}$ in $\overline{\Omega}$ for a positive solution $u$ of \eqref{p} meeting the condition that 
$\phi_{\overline{\varepsilon}}\leq u \leq 1$ in $\overline{\Omega}$. 
As a matter of fact, Proposition \ref{lem:albet:bddnorm} and Lemma \ref{prop:sub} show that 
$\phi_{\overline{\varepsilon}}\leq u<1$ in $\overline{\Omega}$ for a positive solution $u>0$ in $\overline{\Omega}$ of \eqref{p}; therefore, $\underline{U}_{1}=\underline{u}$ and $\overline{U}_{1}=\overline{u}$ are as desired. 

Then, the combination of \eqref{171325c}, Proposition \ref{215925}, and Remark \ref{rem1030}(i) provides assertions (I-i) and (I-ii). 
Assertion (I-iii) follows from Propositions \ref{prop:asympt} and \ref{prop:unprofi}. The proof of assertion (I) of Theorem \ref{mt} is now complete. 
\end{proof}

\appendix 

%------------------------------------- 
\section{Asymptotic property for Dirichlet logistic problem} 

The readers may be interested if assertion (I-iii) of Theorem \ref{mt} is consistent 
with \eqref{convtouD}.  To discuss this issue, we employ a domain perturbation of $\Omega$ for \eqref{Dp} in the case of $\beta_\Omega = 1$.

Let $\Omega_k \subset \mathbb{R}^N$, $k=1,2,3,\ldots$, be a bounded domain with a smooth boundary $\partial \Omega_k$ that satisfies the conditions. 
\begin{align} \label{Omn} 
\begin{cases}
& \mbox{$\bullet$ there exists a ball $B_0 \subset \mathbb{R}^N$ such that $\Omega \Subset \Omega_k \Subset B_0$ for every $k$, } \\
& \mbox{\quad which means that $\overline{\Omega} \subset \Omega_k$ and $\overline{\Omega_k}\subset B_0$ for every $k$,  } \\ 
& \mbox{$\bullet$ for any open subset $D\supset \overline{\Omega}$, there exists $k_0$ such that $\overline{\Omega_k} \subset D$ for all $k\geq k_0$. } 
\end{cases}     
 \end{align}
We consider the scenario that $\Omega_k \rightarrow \Omega$ is defined in the sense of \eqref{Omn}, and we present $\Omega_k = \{ x : {\rm dist}(x, \Omega)<\frac{1}{k} \}$ as an example satisfying \eqref{Omn}.

By the monotonicity of $\beta_\Omega$ with respect to $\Omega$, we find that $\beta_{\Omega_k}<1$, and then, by denoting by $u_{\mathcal{D}}^{\Omega_k}$ the unique positive solution of \eqref{Dp} for $\Omega$ replaced by $\Omega_k$, we can prove that as $k\to \infty$, 
\begin{align}
& u_{\mathcal{D}}^{\Omega_k} \longrightarrow 0 \quad\mbox{ in } H^1(\Omega), \label{Omk1}\\ 
& \frac{u_{\mathcal{D}}^{\Omega_k}}{\| u_{\mathcal{D}}^{\Omega_k} \|_{\Omega_k}} \longrightarrow \phi_\Omega \quad\mbox{ in } H^1(\Omega). \label{Omk2}
\end{align}
From \eqref{convtouD}, it is easy to deduce that, for a fixed $k$, 
\begin{align} \label{Omk3}
\frac{u_{\lambda}^{\Omega_k}}{\| u_\lambda^{\Omega_k} \|_{\Omega_k}} \longrightarrow \frac{u_{\mathcal{D}}^{\Omega_k}}{\| 
u_{\mathcal{D}}^{\Omega_k} \|_{\Omega_k}} \quad \mbox{ in } H^1(\Omega_k) \quad\mbox{ as } \lambda \to \infty, 
\end{align}
where $u_{\lambda}^{\Omega_{k}}$ denotes the unique positive solution $u_{\lambda}$ of 
\eqref{p} for $\Omega$ replaced by $\Omega_{k }$. 
Considering \eqref{Omk3}, assertions \eqref{Omk1} and \eqref{Omk2} indicate that \eqref{convtouD} and assertion (I-iii) of Theorem \ref{mt} are consistent. 
Although \eqref{Omk1} and \eqref{Omk2} may be known, we provide simple proofs for them (cf.\ \cite{Daners2008}). 

Then, the unique positive solution $u_{\mathcal{D}}^{\Omega_k} \in C^{2+\theta}(\overline{\Omega_k}) \cap H^1_0(\Omega_k)$ is extended by $0$ to $B_0$ as a function in $H^1_0(B_0)$, which is still denoted by the same notation; then, $\| u_{\mathcal{D}}^{\Omega_k} \|_{B_0}=\| u_{\mathcal{D}}^{\Omega_k} \|_{\Omega_k}$. Hereafter, we write $u_{\mathcal{D}}^{\Omega_k}$ by $u_k$.

\begin{proof}[Proof of \eqref{Omk1}]
First, we claim that $u_k$ is bounded in $H^1_0(B_0)$. 
Proposition \ref{lem:albet:bddnorm} shows that $u_k \leq 1$ in $\Omega_k$. Hence, we infer that 
\begin{align*}
\int_{\Omega_k} |\nabla u_k|^2 = \int_{\Omega_k} \biggl( u_k^2 - u_k^{p+1} \biggr) \leq c, 
\end{align*}
as desired. Then, up to a subsequence, 
$u_k \rightharpoonup u_\infty$ , $u_k \rightarrow u_\infty$ in $L^2(B_0)$, and $u_k \rightarrow u_{\infty}$ a.e.\ in $B_0$ for some $u_\infty \in H^1_0(B_0)$ nonnegative. 

Next, we claim that $u_\infty \in H^1_0(\Omega)$. Given $x \in B_0\setminus \overline{\Omega}$, from \eqref{Omn}, there exists $k_0$ such that if $k \geq k_0$, then $u_k(x)=0$. Therefore, $u_\infty (x) = 0$ in $B_0\setminus \overline{\Omega}$, and the desired assertion is deduced by using  \cite[Proposition 5.4.3]{Daners2008}. 

By definition, 
\begin{align*}
\int_{\Omega_k} \biggl( \nabla u_k \nabla \varphi - u_k \varphi + u_k^p \varphi \biggr) = 0, \quad \forall \varphi \in H^1_0(\Omega_k).
\end{align*}
A function $\varphi \in H^1_0(\Omega)$ is extended by $0$ to $B_0$ as a function in $H^1_0(B_0)$, and then, for any $\varphi \in H^1_0(\Omega)$, 
\begin{align*}
\int_{\Omega_k} \biggl( \nabla u_k \nabla \varphi - u_k \varphi + u_k^p \varphi \biggr) = 0. 
\end{align*}
Thus, 
\begin{align*}
\int_{B_0} \biggl( \nabla u_k \nabla \varphi - u_k \varphi + u_k^p \varphi \biggr) = 0. 
\end{align*}
Taking the limit provides 
\begin{align*}
\int_{B_0} \biggl( \nabla u_\infty \nabla \varphi - u_\infty \varphi + u_\infty^p \varphi \biggr) = 0,      
\end{align*}
where we used the Lebesgue dominated convergence theorem to deduce that $\int_{B_0} u_k^p \varphi \rightarrow \int_{B_0} u_\infty^p \varphi$. 
Then, for $u_{\infty} \in H^{1}_{0}(\Omega)$, we get 
\begin{align*}
\int_{\Omega} \biggl( \nabla u_\infty \nabla \varphi - u_\infty \varphi + u_\infty^p \varphi \biggr) = 0, \quad \forall \varphi \in H^1_0(\Omega).     
\end{align*}
This means that $u_\infty$ is a nonnegative weak solution of \eqref{Dp}, and as a matter of fact, $u_{\infty} = 0$ in $\Omega$ because $\beta_\Omega = 1$. 
Thus, $u_\infty = 0$ in $B_0$, and $\int_{B_0} u_k^2 \rightarrow 0$.  

To our end, it suffices to verify that $\| u_k \|_{B_0} \rightarrow 0$. 
By the weak lower semi--continuity, 
\begin{align*}
0=\int_{B_0}\biggl( |\nabla u_\infty|^2 - u_\infty^2 \biggr) 
&\leq \varliminf_{k\to \infty} \int_{B_0} \biggl( |\nabla u_k|^2 - u_k^2 \biggr) \\ 
&\leq \varlimsup_{k\to \infty} \int_{B_0} \biggl( |\nabla u_k|^2 - u_k^2 \biggr) \\
&= \varlimsup_{k\to \infty} \biggl( -\int_{B_0} u_k^{p+1} \biggr) \leq 0. 
\end{align*}
Hence, $\int_{B_0} \left( |\nabla u_k|^2 - u_k^2 \right) \rightarrow 0$. Because $\int_{B_0}u_k^2 \rightarrow 0$, we deduce that $\int_{B_0}|\nabla u_k|^2 \rightarrow 0$, as desired. 
\end{proof}

Then, we extend $\phi_\Omega \in C^{2+\theta}(\overline{\Omega}) \cap H^1_0(\Omega)$ by $0$ to $B_0$ as a function in $H^1_0(B_0)$, satisfying $\| \phi_\Omega\|_{B_0}=1$. 

%-------------------------------------------
\begin{proof}[Proof of \eqref{Omk2}] 
For $w_k = \frac{u_k}{\| u_k\|_{\Omega_k}}\in H^{1}_{0}(B_{0})$, $\| w_k \|_{B_0}=1$ because $\| u_k \|_{\Omega_k} = \| u_k \|_{B_0}$. 
Up to a subsequence, $w_k \rightharpoonup w_\infty \geq0$, 
$w_k \rightarrow w_\infty$ in $L^2(B_0)$, and $w_{\infty} \rightarrow w_{\infty}$ a.e.\ in $B_{0}$ for some $w_\infty \in H^1_0(B_0)$. Similarly as for $u_\infty$, we deduce that $w_\infty = 0$ in $B_0\setminus \overline{\Omega}$; thus, $w_\infty \in H^1_0(\Omega)$. 

Since $\beta_\Omega = 1$, the assertion $w_\infty \in H^1_0(\Omega)$ implies that 
\begin{align}    
0 \leq \int_\Omega \biggl( |\nabla w_\infty|^2 - w_\infty^2 \biggr)  
& = \int_{B_0} \biggl( |\nabla w_\infty|^2 - w_\infty^2 \biggr) \nonumber \\
&\leq \varliminf_{k\to \infty} \int_{B_0} \biggl( |\nabla w_k|^2 - w_k^2 \biggr) \nonumber  \\ 
&\leq \varlimsup_{k\to \infty} \int_{B_0} \biggl( |\nabla w_k|^2 - w_k^2 \biggr) \nonumber \\ 
& = \varlimsup_{k\to \infty} \biggl( -\int_{B_0} \| u_k \|_{\Omega_k}^{p-1}w_k^{p+1} \biggr) \leq 0.  \label{wn}
\end{align}
Hence, $\int_\Omega (|\nabla w_\infty|^2 
- w_\infty^2)=0$, and $w_{\infty} = s\phi_{\Omega}$ for some $s\geq 0$. 
Further, we deduce from \eqref{wn} that $\| w_{k}\|_{B_{0}} \rightarrow \| w_{\infty}\|_{B_{0}}$, and thus, $w_{k} \rightarrow w_{\infty}$ in $H^{1}_{0}(B_{0})$ 
since $w_{k} \rightharpoonup w_{\infty}$ in $B_{0}$. Therefore, 
$\| w_\infty\|_{B_0}=1$, and $s=1$, i.e., $w_{\infty} = \phi_{\Omega}$, as desired. 
\end{proof}

%--------------------------------------

\end{document}